\documentclass[11pt, reqno]{amsart}
\usepackage{color}
%@@@@@@@@@@@@@@@@@@@@@@@@@@@@@@@@@@@@%@@@@@@@@@@@@@
%\usepackage[T1]{fontenc}
\usepackage[utf8]{inputenc}
\usepackage[T1]{fontenc}

%@@@@@@@@@@@@@@@@@@@@@@@
\usepackage{amsmath}
\usepackage{amssymb}
\usepackage{amsfonts}
\usepackage{mathrsfs}
\usepackage{amsbsy}
\usepackage{relsize}
\usepackage[colorlinks, linkcolor=red, citecolor=blue, urlcolor=blue, pagebackref, hypertexnames=false]{hyperref}
\usepackage[hyperpageref]{backref}
\usepackage{enumitem}

\usepackage{tikz}

%@@@@@@@@@@@@@@@@@@@@@@@@@@@@@@@@@@@@@
%\usepackage{refcheck}
%@@@@@@@@@@@@@@@@@@@@@@@@@@@@@@@@@@@@%@@@@@@@@@@@@@@@@@@@@@@@@@@@@@@@@@@@@@

%@@@@@@@@@@@@@@@@@@@@@@@@@@@@@@@@@@@@%@@@@@@@@@@@@@@@@@@@@@@@@@@@@@@@@@@@@

\numberwithin{equation}{section}
\newcounter{bbb}
\numberwithin{bbb}{section}

% Page formats--------------------------------------
\textheight 21.5cm \textwidth 14.5cm
 \voffset=0.5cm
\oddsidemargin=0.5cm \evensidemargin=0.5cm \topmargin=0.5cm

%@@@@@@@@@@@@@@@@@@@@@@@@@@@@@@@@@@@@%@@@@@@@@@@@@@@

\newcommand{\R}{\mathbb{R}}

\newcommand{\C}{\mathbb{C}}

\newcommand{\what}{\widehat}
\newcommand{\bD}{\mathbf{D}}
%@@@@@@@@@@@@@@@@@@@@@@@@@@@@@@@@@@@@%@@@@@@@@@@@@@@@@@@@@@@@@@@@@@@@@@@@@%@@@@@@@@%@@@@@@@@@@@@@@@@@@@@@@@@@@@@@@@@@@@@%@@@@@@@@@@@@@@@@@@@@@@@@@@@@@@@@@@@@%@@@@@@@@@@@@@@
%\usepackage[notref,notcite]{showkeys}
%@@@@@@@@@@@@@@@@@@@@@@@@@@@@@@@@@@@@%@@@@@@@@@@@@@@@@@@@@@@@@@@@@@@@@@@@@%@@@@@@@@

%@@@@@@@@@@@@@@@@@@@@@@@@@@@@@@@@@@@@%@@@@@@@@@@@@@@@@@@@@

%ABREVIATIONS%

\newcommand{\beq}{\begin{eqnarray}}
\newcommand{\eeq}{\end{eqnarray}}
\newcommand{\bq}{\begin{equation}}
\newcommand{\eq}{\end{equation}}
\newcommand{\beqn}{\begin{eqnarray*}}
\newcommand{\eeqn}{\end{eqnarray*}}
\newcommand{\bex}{\begin{exo}}
\newcommand{\eex}{\end{exo}}
\newcommand{\ben}{\begin{enumerate}}
\newcommand{\een}{\end{enumerate}}

%@@@@@@@@@@@@@@@@@@@@@@@@@@@@@@@@@@@@%@@@@@@@@@@@@@@@@@@@@@@@@@@@@@@@@@@@@%@@@@@@@@%@@

\newtheorem{thm}[bbb]{Theorem}
\newtheorem{lem}[bbb]{Lemma}
\newtheorem{prop}[bbb]{Proposition}
\newtheorem{cor}[bbb]{Corollary}
\newtheorem{defi}[bbb]{Definition}

\newtheorem{rem}[bbb]{Remark}

%@@@@@@@@@@@@@@@@@@@@@@@@@@@@@@@@@@@@%@@@@@@@@@@@@@@@@@@@@@@@@@@@@@@@@@@@@%

\title[Focusing Inhomegeneous Fractional NLS]{Long time dynamics for the focusing inhomogeneous fractional  Schr\"odinger equation}

\author[M. Majdoub and T. Saanouni]{Mohamed Majdoub and Tarek Saanouni}
\address[M. Majdoub]{Department of Mathematics, College of Science, Imam Abdulrahman Bin Faisal University, P. O. Box 1982, Dammam, Saudi Arabia.\newline Basic and Applied Scientific Research Center, Imam Abdulrahman Bin Faisal University, P.O. Box 1982, 31441, Dammam, Saudi Arabia}
\email{\sl mmajdoub@iau.edu.sa}
\email{\sl med.majdoub@gmail.com}
\address[T. Saanouni]{ Departement of Mathematics, College of Science and Arts in Uglat Asugour, Qassim University, Buraydah, Kingdom of Saudi Arabia.\newline University of Tunis El Manar, Faculty of Sciences of Tunis, LR03ES04 partial differential equations and applications, 2092 Tunis, Tunisia.}
\email{\sl Tarek.saanouni@ipeiem.rnu.tn}
\email{\sl t.saanouni@qu.edu.sa}

\subjclass[2020]{35Q55, 35P25, 35R11, 35B44, 47J35.}
\keywords{Fractional Schr\"odinger equation, global well-posedness, scattering, Morawetz estimates, Virial identities, blow-up.}

%@@@@@@@@@@@@@@@@@@@@@@@@@@@@@@@@@@@@@@@@@@@@@@@@@@@@@@@@@@@@@@@@@@@@@@@@@@@@

\begin{document}
\begin{abstract}
We consider the following fractional NLS with focusing inhomogeneous power-type nonlinearity
$$i\partial_t u -(-\Delta)^s u  + |x|^{-b}|u|^{p-1}u=0,\quad (t,x)\in \R\times \R^N,$$
where $N\geq 2$, $1/2<s<1$, $0<b<2s$ and $1+\frac{2(2s-b)}{N}<p<1+\frac{2(2s-b)}{N-2s}$.

We prove the ground state threshold of global existence and scattering versus finite time blow-up of energy solutions in the inter-critical regime with spherically symmetric initial data. The scattering is proved by the new approach of Dodson-Murphy ({Proc. Am. Math. Soc.} {145}: {4859--4867}, 2017). This method is based on Tao's scattering criteria and Morawetz estimates. One describes the threshold using some non-conserved quantities in the spirit of the recent paper by Dinh (Discr. Cont. Dyn. Syst. 40: 6441--6471, 2020). The radial assumption avoids a loss of regularity in Strichartz estimates. The challenge here is to overcome two main difficulties. The first one is the presence of the non-local fractional Laplacian operator. The second one is the presence of a singular weight in the non-linearity. The greater part of this paper is devoted to prove the scattering of global solutions in $H^s(\R^N)$.                       \end{abstract}
\date{\today}
	\maketitle
\tableofcontents

%%%%%%%%%%%%%%%%%%%%%%%%%%%%%%%%%%%%%%%%%%%%%%%%%%%%%%%%%%%%%%%%%%%%%%%%%%%%%%%%%%%%%%%%%%%%%%%%%%%%%%%%%%%%%%%%%%%%%%%%%%%%%%%%%
\section{Introduction}
\label{S1}
%%%%%%%%%%%%%%%%%%%%%%%%%%%%%%%%%%%%%%%%%%%%%%%%%%%%%%%%%%%%%%%%%%%%%%%%%%%%%%%%%%%%%%%%%%%%%%%%%%%%%
	We consider the following fractional NLS with focusing inhomogeneous power nonlinearity
	\begin{align} \label{S}
	\left\{
	\begin{array}{ccl}
	i\partial_t u -(-\Delta)^s u &=& - |x|^{-b}|u|^{p-1}u, \quad (t,x) \in \R \times \R^N,  \\
	u(0,x)  &= & u_0(x),
	\end{array}
	\right.
	\end{align}
where $N\geq 2$, $s\in(0,1)$, $p>1$ and $b>0$. The fractional Laplacian operator $(-\Delta)^s$ is defined by
$(-\Delta)^su:=\mathcal{F}^{-1}\left(|\xi|^{2s}\mathcal{F}(u)\right)$ where $\mathcal{F}$ and $\mathcal{F}^{-1}$ are the Fourier transform and  inverse Fourier transform, respectively.\\

The fractional Schr\"odinger equation arises for instance as an effective equation in the
continuum limit of discrete models with long-range interactions. In \cite{KLS13} Kirkpatrick, Lenzmann and Staffilani refer to models of mathematical biology, specifically
for the charge transport in biopolymers like the DNA. Numerous applications of fractional NLS-type equations in the physical sciences
could be mentioned, ranging from the description of Boson stars \cite{FJL07} to water
wave dynamics. The fractional Laplacian also appears as a natural operator when
considering jump processes \cite{Val09}, which makes it valuable for Lévy processes in
probability theory with applications in financial mathematics.\\

From the mathematical point of view, there is a quite large literature devoted to \eqref{S} and its variants. We emphasize that the case $s=1,\, b=0$ corresponds to the nonlinear Schr\"odinger equation ({\tt NLS}). There have been tremendous amount of researches on ({\tt NLS}), and the monographs \cite{Caz-CLNM, Fib-Book, SS99, Tao06} cover all known results in great mathematical detail.\\

The case $s=1, b\neq 0$ corresponds to the inhomegeneous nonlinear Schr\"odinger equation ({\tt INLS}). The Cauchy problem for ({\tt INLS}) with $b>0$ has received a lot of interested in the mathematical community; see, among others, \cite{GS, Guzman, Dinh, AT, KLS, Dinh-NA, Farah, BL, FG-JDE, FG-BBMS, Campos, Dinh-2D, MMZ, CFGM, DK-SIAM, Murphy} and references therein. Note that the case of spatial growing nonlinearity, that is $b<0$, has been recently investigated in \cite{DMS}. See also \cite{CG-DCDS-B, Chen, Chen-CMJ, CG-AM,ghs}. \\

We stress that the 2D energy critical counterpart has received more attention in the past decade. See, among others, \cite{CIMM, Nonlinearity, BDM-JHDE2018, BDM-CPAA2019}.\\

    Let us turn now to the fractional homogeneous case, that is, $0<s<1,\, b=0$, which we call for short ({\tt FNLS}). A first analysis of the well-posedness in Sobolev spaces has been done by Hong and Sire \cite{yhys}. It is worth noticing that the results obtained in \cite{yhys} require some extra assumptions due to the loss of $N(1-s)$ derivatives in the dispersion \cite{cox}. In the radial settings, using Strichartz estimates without loss of regularity \cite{gw1}, the local well-posedness of solutions holds in the energy space \cite{vdd}. See also \cite{gh1,gh2} for a cubic source term. The scattering of radial focusing solutions below the ground state threshold was investigated in \cite{syz}. In the energy-critical radial regime, the global well-posedness and scattering in the defocusing case, and in the focusing case with energy below the ground state, were obtained in \cite{Guo2018}. Due to the lack of a variance identity, the finite time blow-up of solutions was open for a long time. A partial result was given in \cite{bhl} by use of a localized variance identity. \\

To the best authors knowledge, there exist few works dealing with the inhomogeneous fractional Schr\"odinger equation \eqref{S}, with $b\neq0$ and $0<s<1$. Sufficient conditions for global existence in $H^s$ were derived in \cite{pz}. In addition, a blow-up criterion of radial solutions for the inter-critical regime was give in \cite{pz}. By using a sharp Gagliardo-Nirenberg inequality and potential well method, the second author obtained the well-posedness in the case of spatial growing nonlinearity, that is $b<0$; see \cite{ts}. See also \cite{st4} for the bi-harmonic Schr\"odinger equation.\\

The main goal of this paper is to fill in a gap in the literature. Indeed, the ground state threshold of global existence and scattering versus finite time blow-up of focusing radial solutions seems not to be treated before. One needs to deal with two combined difficulties. The first one is the presence of the non-local fractional Laplacian operator, which gives a loss of derivatives in the dispersive estimate \eqref{free}. The second one is the singular weight $|x|^{-b}$ in the non-linearity. The main part of this work is devoted to proving the scattering. We avoid here the Kenig-Merle road-map, based on the concentration-compactness method \cite{Merle}. Instead, we use the Dodson-Murphy method \cite{DM2017} based on Tao's scattering criteria \cite{tao1} and Morawetz estimates. Moreover, we expresse the threshold using some non-conserved quantities in the spirit of the recent paper by Dinh \cite{dd}. This gives as a consequence the classical mass-energy threshold obtained first by Roudenko et al. \cite{dhr}.\\

The non-linear Schr\"odinger equation \eqref{S} satisfies the scaling invariance
$$0< \lambda \longmapsto u_{\lambda}(t,x):=
\lambda^{\frac{2s-b}{p-1}}u(\lambda^{2s} t,\lambda x).$$
The only one homogeneous Sobolev norm invariant under the above scaling is relevant in this study. Indeed, the next identity
$$\|u_\lambda(t)\|_{\dot H^\mu}=\lambda^{\mu-(\frac N2-\frac{2s-b}{p-1})}\|u(\lambda^{2s}t)\|_{\dot H^\mu},$$
 gives rise to the critical Sobolev index $s_c:=\frac N2-\frac{2s-b}{p-1}$. The energy-critical regime corresponds to $s_c=s$ or $p=p^*:=1+\frac{2(2s-b)}{N-2s}$ and is related to the energy conservation law
\begin{equation}
    \label{Energy}
E[u(t))]:=\int_{\R^N}|\bD^s u(t)|^2\,dx - \frac2{1+p}\int_{\R^N}|x|^{-b}|u(t)|^{1+p}\,dx = E[u_0],
\end{equation}
where $\bD^{\sigma}$ stands for the operator with Fourier multiplier
\begin{equation}
\label{bD}
\mathcal{F}\left(\bD^{\sigma}\phi\right)(\xi)=|\xi|^{\sigma}\mathcal{F}({\phi})(\xi).
\end{equation}
In particular, $(-\Delta)^s=\bD^{2s}.$

The mass-critical regime corresponds to $s_c=0$, or $p=p_*:=1+\frac{2(2s-b)}{N}$ and is related to the mass conservation law
\begin{equation}
  \label{Mass}
M[u(t)]:=\int_{\R^N}|u(t,x)|^2\,dx = M[u_0].
\end{equation}
Unless otherwise specified, we restrict ourselves to the inter-critical regime $0< s_c<s$. The last condition can be written in terms of $p$ as $p_*<p<p^*$.

%where
%\begin{equation}
%\label{p-crit}
%p_*:=1+\frac{2(2s-b)}{N}\quad\mbox{ and }\quad p^*:=1+\frac{2(2s-b)}{N-2s}.
%\end{equation}
%A solution of \eqref{S} formally satisfies the conservation of the mass and energy defined respectively by
%\begin{gather}
%\label{Mass}
%M[u(t)]:=\int_{\R^N}|u(t,x)|^2\,dx = M[u_0],\\
%\label{Energy}
%E[u(t))]:=\int_{\R^N}|\bD^s u(t)|^2\,dx - \frac2{1+p}\int_{\R^N}|x|^{-b}|u(t)|^{1+p}\,dx = E[u_0],
%\end{gather}
%where $\bD^{\sigma}$ stands for the operator with Fourier multiplier
%\begin{equation}
%\label{bD}
%\what{\left(\bD^{\sigma}f\right)}(\xi)=|\xi|^{\sigma}\what{f}(\xi),
%\end{equation}
%with $\what{f}(\xi)=\displaystyle\int_{\R^N}\,{\rm e}^{-{\rm i}x\cdot\xi}\,f(x)\,dx$ is the Fourier transform of $f$. In particular, $$(-\Delta)^s=\bD^{2s}.$$

Define the positive real number $\gamma_c:=\frac{s-s_c}{s_c}$ and the quantities
\begin{gather}
\label{P-u}
P[\phi]:=\int_{\R^N}|x|^{-b}|\phi(x)|^{p+1}\,dx,\\
\label{I-u}
I[\phi]:=\|\bD^s\, \phi\|^2-\frac B{1+p}\int_{\R^N}|x|^{-b}|\phi(x)|^{1+p}\,dx,
\end{gather}
where $B$ is given by \eqref{AB} below and $\|\cdot\|$ denotes the $L^2(\R^N)$ norm. 
Define also the scale invariant quantities
\begin{align}
\label{M-E}
\mathcal{ME}[\phi]&:=\Big(\frac{M[\phi]}{M[Q]}\Big)^{\gamma_c}\Big(\frac{E[\phi]}{E[Q]}\Big),\\
\label{M-G}
\mathcal{MG}[\phi]&:=\Big(\frac{\|\phi\|}{\|Q\|}\Big)^{\gamma_c}\Big(\frac{\|\nabla \phi\|}{\|\nabla\,Q\|}\Big),
\end{align}
where $Q\in H^s$ is the unique non-negative radially symmetric decreasing solution of \eqref{E} given by Lemma \ref{GNI} below.

From now on, we hide the variable $t$ for simplicity, spreading it out only when necessary. Let $H_{rad}^{s}$ denote the space of functions in $H^{s}$ which are radially symmetric.
%%%%%%%%%%%%%%%%%%%%%%%%%%%%%%%%%%%%%%%%%%%%%%%%%%%%%%%%%%%%%%%%%%%%%%%%%%%%%%%%%%%%%%%%%%%%%%%%%
%%%%%%%%%%%%%%%%%%%%%%%%%%%%%%%%%%%%%%%%%%%%%%%%%%%%%%%%%%%%%%%%%%%%%%%
Our main contribution  reads as follows.
\begin{thm}\label{t1}
    Assume that $ N\geq 2$, $s\in(\frac N{2N-1},1)$, $0<b<2s$ and $p_*<p<p^*$. Let $u_0\in H_{rad}^{s}$ and  ${u}\in C([0,T^*); H_{rad}^{s})$  be the corresponding maximal solution of \eqref{S} given by Proposition \ref{prop0}. 
\begin{enumerate}
\item[(i)]
Suppose that 
\begin{equation}
\sup_{t\in[0,T^\ast)}P[u(t)][M[u(t)]^{\gamma_c}<P[Q][M[Q]]^{\gamma_c}.\label{ss1}
\end{equation}
Then, $u$ is global. Moreover, if $N\geq3$, $s>\frac N{1+N}$, $p> 2(1-\frac bN)$ {\rm(  $p<\frac{N-2b}{N-2s}$ if $N=3$)}, then $u$ scatters in $H^s$. 
\item[(ii)]
Suppose that
\begin{equation}
\label{bl}
\sup_{0\leq t <T^*}\,I[u(t)]<0.
\end{equation}
Then, $u$ blows-up in finite or infinite time in the sense that $T^*<\infty$ or $T^*=\infty$ and there is $t_n\to\infty$ such that $\|\bD^s\,u(t_n)\|\to\infty$ as $n$ goes to infinity.
\end{enumerate}
\end{thm}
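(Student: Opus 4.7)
My plan is to handle the two assertions by complementary methods: for part (i) I would combine a sharp Gagliardo--Nirenberg/Pohozaev coercivity argument with the Dodson--Murphy scattering scheme, while for part (ii) I would follow the localized virial strategy of Boulenger--Himmelsbach--Lenzmann adapted to the inhomogeneous weight $|x|^{-b}$.

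\medskip

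\textbf{Part (i).} I would first invoke the sharp Gagliardo--Nirenberg inequality associated with $Q$ (Lemma \ref{GNI}), whose optimal constant involves exactly $B$. Combined with the Pohozaev identities for $Q$ and mass conservation, hypothesis \eqref{ss1} rewrites as the strict bound
\begin{equation*}
\|\bD^s u(t)\|^{2}\,M[u(t)]^{\gamma_c}\ <\ \|\bD^s Q\|^{2}\,M[Q]^{\gamma_c}
\end{equation*}
via a standard continuity/bootstrap on $t\mapsto \|\bD^s u(t)\|$ through the energy conservation \eqref{Energy}. The resulting uniform $H^s$ bound allows Proposition \ref{prop0} to iterate and gives $T^\ast=\infty$. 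For scattering I would verify Tao's criterion along a sequence $t_n\to+\infty$: the same coercivity furnishes a uniform lower bound $I[u(t)]\gtrsim \|\bD^s u(t)\|^{2}$, and I would establish a radial Morawetz estimate by computing $\tfrac{d^2}{dt^2}\!\int \chi_R|u|^2\,dx$ for a truncated virial weight $\chi_R$, controlling the non-local commutators $[(-\Delta)^s,\chi_R]$ through Balakrishnan's heat-semigroup representation as in \cite{bhl}. The radial Sobolev embedding $|x|^{(N-1)/2}|u(x)|\lesssim \|u\|_{H^s}$ (valid since $s>1/2$) tames the singular weight $|x|^{-b}$ away from the origin, and the radial Strichartz estimates without loss of regularity of \cite{gw1} (requiring $s>N/(2N-1)$) convert the Morawetz decay into the smallness of the linear flow at $t_n$ required by Tao.

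\medskip

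\textbf{Part (ii).} Setting $V_R(t):=\int \phi_R(x)|u(t,x)|^{2}\,dx$ for a radial cutoff $\phi_R(x)=R^{2}\phi(x/R)$ that equals $|x|^{2}$ near the origin, a careful computation handling the fractional Laplacian through its heat-semigroup representation yields
\begin{equation*}
V_R''(t)\ =\ 8\,I[u(t)]\ +\ \mathcal{O}\!\left(R^{-\sigma}\bigl(1+\|\bD^s u(t)\|^{\alpha}\bigr)\right)
\end{equation*}
for exponents $\sigma,\alpha>0$ depending on $s,b,N,p$. Hypothesis \eqref{bl} yields $\delta>0$ with $I[u(t)]\le -\delta$ on $[0,T^\ast)$; if $T^\ast=\infty$ and $\|\bD^s u(t)\|$ stayed bounded on $[0,\infty)$, then choosing $R$ large enough to absorb the remainder would give $V_R''(t)\le -4\delta$, contradicting $V_R(t)\ge 0$ after two integrations. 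Hence either $T^\ast<\infty$ or a sequence $t_n\to\infty$ forces $\|\bD^s u(t_n)\|\to\infty$.

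\medskip

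The main obstacle is the non-locality of $(-\Delta)^s$, which surfaces twice. In (i) the Morawetz step produces commutators $[(-\Delta)^s,\chi_R]$ that do not vanish pointwise and must be controlled through heat-kernel expansions combined with radial Sobolev decay; this is precisely where the stricter threshold $s>N/(1+N)$ and the additional restriction on $p$ in dimension three enter, in order to gain enough integrability in the Strichartz step. In (ii) non-locality prevents the clean NLS identity $V''(t)=8\,I[u(t)]$, and the core of the argument is to show that the remainder is genuinely of lower order. A secondary obstacle, present in both parts, is the inhomogeneous weight $|x|^{-b}$: the terms arising from integration-by-parts against $\nabla(|x|^{-b})$ must be shown either to carry the correct sign or to be absorbed by the coercivity of $I[u]$.
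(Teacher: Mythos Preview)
Your proposal is correct and follows essentially the same route as the paper: coercivity from the sharp Gagliardo--Nirenberg inequality and the Pohozaev identities for $Q$ gives the uniform $H^s$ bound and hence global existence, a localized Morawetz estimate (via Balakrishnan's formula to handle the commutators with $(-\Delta)^s$) feeds into Tao's scattering criterion in the Dodson--Murphy style for part~(i), and the localized virial of Boulenger--Himmelsbach--Lenzmann, with the tail $\int_{|x|>R}|x|^{-b}|u|^{p+1}$ controlled by radial Sobolev decay, yields the contradiction in part~(ii). Two minor slips: in the fractional setting the virial identity produces $4s\,I[u(t)]$ rather than $8\,I[u(t)]$, and no bootstrap is needed for global existence in part~(i) since hypothesis \eqref{ss1} is already assumed for all $t\in[0,T^*)$---the coercivity of Corollary~\ref{bnd} applies directly at each time.
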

%\textcolor{blue}{
In view of the  results stated in the above theorem, some comments are in order.
~{\rm \begin{itemize}
\item{
The scattering in $H^s$ means that there exists $\varphi_\pm\in H^s$ such that\newline  $\displaystyle\lim_{t\to\pm\infty}\|u(t)-e^{-it\bD^{2s}}\varphi_\pm\|_{H^s}=0$, where $e^{-it\bD^{2s}}$ denotes the free fractional Schr\"odinger operator given by \eqref{FO}.}
\item
In Corollary \ref{t2}, we derive the scattering under the ground state threshold as a consequence of Theorem \ref{t1}.
\item As one can see later in the proof of the above theorem, the effect  of the fractional operator appears in the scattering criteria in Section \ref{S5}. Precisely, when using the dispersive estimate \eqref{free} to handle the term $F_1$ appearing in \eqref{F1-2}. 
The natural assumption $s\geq N(1-s)$ here gives the restriction $s\geq\frac{N}{1+N}$. This is stronger than the Strichartz estimates requirement $s>\frac N{2N-1}$.
\item
It seems that with the method used here one has the scattering for some $p_*<\tilde p<p<p^*$. The same restriction was observed in \cite{syz}.
\item
The radial assumption is necessary to avoid a loss of regularity in Strichartz estimates \cite{gw1}.
\item
In the homogeneous case $b=0$, there is a scattering result under the ground state threshold of the ({\tt FNLS}), see \cite{syz}.
\item
In the non-radial case, based on the local well-posed result \cite{yhys}, the authors will treat the asymptotics of energy solutions in a forthcoming work.
\item
In the space dimension $N=2$, the inequality \eqref{false} below is false. Arguing with a different way, one obtains the scattering for $\max\{1+\frac bs,2\}\leq p\leq 1+\frac bs+(1-\frac bs)\frac N{N-2s}$; see appendix \ref{appendix1} for the details.
\item
{If one adds the supplementary assumption $p<1+4s$, then arguing as in \cite{bhl}, one can prove the finite time blow-up provided that \eqref{bl} is satisfied.
\item
In the classical case $s=1$, we can remove the radial assumption by the use of the decay of the inhomogeneous term \cite{cc}. But here, the spherically symmetric condition is also needed in Strichartz estimates.}
\item
In the attractive regime, namely the equation \eqref{S'}, the scattering of global solutions with spherically symmetric data is proved in the Appendix \ref{appendix2}.
\item
In a paper in progress, the authors investigate the scattering of \eqref{S} with $b<0$.
\end{itemize}}

As a consequence of the above result, one has the next dichotomy of global/non global existence of energy solutions under the ground state threshold.
\begin{cor}\label{t2}
Assume that $ N\geq 2$, $s\in(\frac N{2N-1},1)$, $0<b<2s$ and $p_*<p<p^*$. Let $u_0\in H_{rad}^{s}$ and  ${u}\in C([0,T^*); H_{rad}^{s})$  be the corresponding maximal solution of \eqref{S} given by Proposition \ref{prop0}. Suppose further that
\begin{equation} \label{t11}
\mathcal{ME}[u_0]<1.
\end{equation}
Then,
\begin{enumerate}
\item[(i)]
the solution $u$ is global provided that
\begin{equation}\label{t12}
\mathcal{MG}[u_0]<1.\end{equation}
Moreover, it scatters in $H^s$ if $N\geq3$, $s>\frac N{1+N}$, $2(1-\frac bN)<p$ and $p<\frac{N-2b}{N-2s}$ if $N=3$;
\item[(ii)]
the solution $u$  blows-up in finite or infinite time provided that
\begin{equation}\label{t13}
\mathcal{MG}[u_0]>1.\end{equation}
\end{enumerate}
\end{cor}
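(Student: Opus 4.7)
The plan is to deduce Corollary~\ref{t2} from Theorem~\ref{t1} by translating the scale-invariant hypothesis $\mathcal{ME}[u_0]<1$, together with the sign of $\mathcal{MG}[u_0]-1$, into the non-invariant conditions \eqref{ss1} and \eqref{bl}. The bridge is the sharp weighted Gagliardo--Nirenberg inequality from Lemma~\ref{GNI} and the Pohozaev identities satisfied by the ground state $Q$.

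First I would record the sharp inequality in the form
\begin{equation*}
P[\phi]\;\le\;K\,\|\bD^s\phi\|^{\alpha}\,\|\phi\|^{p+1-\alpha},\qquad \alpha:=\frac{N(p-1)+2b}{2s},
\end{equation*}
with equality at $\phi=Q$. A scaling computation identifies $\alpha$ as the unique exponent compatible with the critical Sobolev index $s_c$, and shows $\alpha>2$ throughout the intercritical range $p_{\ast}<p<p^{\ast}$. The Euler--Lagrange equation together with the fractional Pohozaev identity for $Q$ then express $E[Q]$, $P[Q]$ and $\|\bD^s Q\|^{2}$ as explicit positive multiples of one another; this is what allows the sharp constant $K$ to be eliminated from the subsequent normalization, and links $B$ (from the definition \eqref{I-u}) to $\alpha$.

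Next I combine mass and energy conservation with the sharp inequality to produce, for every $t\in[0,T^{\ast})$, the scale-invariant polynomial inequality
\begin{equation*}
\Phi\bigl(\mathcal{MG}[u(t)]\bigr)\;\le\;\mathcal{ME}[u(t)]\;=\;\mathcal{ME}[u_0],
\end{equation*}
where $\Phi:[0,\infty)\to\R$ is a fixed smooth function, $\Phi(0)=0$, $\Phi(y)\sim y^{2}$ near $0$, and -- by virtue of the Pohozaev identities for $Q$ -- $\Phi$ attains its unique maximum value $1$ at $y=1$. Under the hypothesis \eqref{t11}, the sub-level set $\{y\ge 0:\Phi(y)\le\mathcal{ME}[u_0]\}$ therefore splits into two disjoint components separated by the barrier $y=1$. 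I then run the classical continuity bootstrap in $t$: since $t\mapsto\mathcal{MG}[u(t)]$ is continuous on $[0,T^{\ast})$ and cannot jump across $y=1$, it is confined to whichever component contains $\mathcal{MG}[u_0]$, yielding a constant $\theta<1$ with $\mathcal{MG}[u(t)]\le\theta$ under \eqref{t12}, and a constant $\theta>1$ with $\mathcal{MG}[u(t)]\ge\theta$ under \eqref{t13}.

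In case (i), the uniform bound on $\|\bD^s u(t)\|$ forces $T^{\ast}=\infty$; re-inserting the bound into the sharp Gagliardo--Nirenberg inequality produces $\delta>0$ such that
\begin{equation*}
P[u(t)]\,M[u(t)]^{\gamma_c}\;\le\;(1-\delta)\,P[Q]\,M[Q]^{\gamma_c}
\end{equation*}
for all $t$, which is exactly \eqref{ss1}, so Theorem~\ref{t1}(i) yields global existence and, under the extra restrictions on $N,s,p$, scattering in $H^{s}$. In case (ii), the lower bound $\mathcal{MG}[u(t)]\ge\theta>1$ translates via the same Pohozaev identities into $I[u(t)]\le -c(\theta-1)\,\|\bD^s u(t)\|^{2}<0$ uniformly in $t$, giving \eqref{bl}, so Theorem~\ref{t1}(ii) delivers finite- or infinite-time blow-up. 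The main obstacle I anticipate is Step~2 --- pinning down the explicit form of $\Phi$ and verifying that its global maximum is exactly $1$, attained only at $y=1$. This requires careful bookkeeping of the fractional Pohozaev relations for $Q$ together with the intercritical condition $\alpha>2$; once that is in place, the remaining bootstrap is the mass--energy dichotomy of \cite{dhr,dd} adapted verbatim to the present fractional inhomogeneous setting.
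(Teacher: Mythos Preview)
Your proposal is correct and follows essentially the same route as the paper's proof in Section~\ref{S7}. The paper writes your function $\Phi$ explicitly as $g(\tau)=\tau^{2}-\frac{2K_{opt}}{1+p}\tau^{B}$ (so $\alpha=B$), verifies via the Pohozaev identities \eqref{K-opt-1} that its maximum is attained at $\tau=\|\bD^{s}Q\|\|Q\|^{\gamma_c}$, and then runs the same continuity bootstrap and the same reductions to \eqref{ss1} and \eqref{bl}; for the blow-up part it obtains the constant negative bound $I[u]M[u]^{\gamma_c}\le -\varepsilon\|\bD^{s}Q\|^{2}M[Q]^{\gamma_c}$ rather than your (also valid) bound proportional to $\|\bD^{s}u(t)\|^{2}$, but either suffices for \eqref{bl}.
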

\begin{rem}
{\rm The finite time blow-up was proved in \cite{pz} under the supplementary assumption $p<1+4s$.}
\end{rem}
%%%%%%%%%%%%%%%%%%%%%%%%%%%%%%%%%%%%%%%%%%%%%%%%%%%%%%%%%%%%%%%%%%%%%%%%%%%%%%%%%%%%%%%%%%%
We conclude the introduction with an outline of the paper. In Section \ref{S2}, we recall some useful tools needed in our proofs. In addition, we give some auxiliary results in order to facilitate the reading of the rest of the paper. Section \ref{S3} is devoted to some variational analysis. We derive Morawetz-type
inequalities in Section \ref{S4}. The scattering criterion is stated and proved in Section \ref{S5}. The proof of Theorem \ref{t1} is given in Section \ref{S6}. Section \ref{S7} contains the proof of the scattering versus blow-up under the ground state threshold, that is, Corollary \ref{t2}. The two-dimential case and the defocusing regime are treated in Appendix \ref{appendix1} and Appendix \ref{appendix2}, respectively. 

Finally, the notation $A\lesssim B$ (resp. $A\gtrsim B$) for positive numbers $A$ and $B$, means that there exists a positive constant $C$ such that $A\leq C B$ (resp. $A\geq C B$).
%%%%%%%%%%%%%%%%%%%%%%%%%%%%%%%%%%%%%%%%%%%%%%%%%%%%%%%%
\section{Useful tools \& Auxiliary results}
\label{S2}
\subsection{Useful tools}

For future convenience,  we recall some known and useful tools which will play an important role in the proof of our main results.

The fractional radial Sobolev inequality reads \cite[Proposition 1]{co}
\begin{equation}
\label{F-Sob-I}
\sup_{x\neq0}|x|^{\frac{N}{2}-\alpha}|u(x)|\leq C(N,\alpha)\|\bD^{\alpha} u\|,\quad\forall u\in \dot{H}^\alpha_{rad},    
\end{equation}
provided that $N\geq 2$ and $1/2<\alpha<N/2$.

We also recall the homogeneous Sobolev embedding \cite[Theorem 1.38]{BCD}
\begin{equation}
\label{Sob-s}
\|u\|_{L^{\frac{2N}{N-2\alpha}}(\R^N)}\leq C(N,\alpha) \|\bD^{\alpha}u\|,\quad\forall\, u\in \dot{H}^\alpha,
\end{equation}
provided that $0\leq \alpha<N/2$.

The classical Sobolev embedding \cite{Adams} states that 
\begin{equation}
\label{Sob-emb}
H^{\alpha}(\R^N)\hookrightarrow L^q(\R^N),\quad  2\leq \,q\leq \frac{2N}{N-2\alpha},
\end{equation}
provided that $0\leq \alpha<N/2$. 

We know from \cite[Theorem 1.62, p. 41]{BCD} that multiplication by a function of Schwartz space $\mathcal{S}(\R^N)$ is a continuous map
from $H^s(\R^N)$ into itself for any $s\in\R$. Precisely we have
\begin{equation*}
\label{SHs}
\|\varphi\,u\|_{H^s}\leq 2^{\frac{|s|}{2}}\left\|(1+|\cdot|^2)^{\frac{|s|}{2}}\widehat{\varphi}\right\|_{L^1}\,\|u\|_{H^s},
\end{equation*}
where $s\in\R$, $\varphi\in\mathcal{S}(\R^N)$ and $u\in H^s(\R^N)$.

The next fractional chain rule \cite{cw} will be useful.
\begin{lem}\label{chain}
Let $N\geq1$, $0<\alpha \leq1$,  $\frac1p=\frac1{p_i}+\frac1{q_i}$, $i=1,2$ and $F\in C^1(\C)$ . Then, 
\begin{equation}
\label{chain1}
\|\bD^{\alpha}F(u)\|_{L^p}\lesssim \|\bD^{\alpha}u\|_{L^{q_1}}\|F'(u)\|_{L^{p_1}},
\end{equation}
and 
 \begin{equation}
\label{chain2}
\|\bD^{\alpha}(uv)\|_{L^{p}}\lesssim \|\bD^{\alpha} u\|_{L^{p_1}}\|v\|_{L^{q_1}}+\|\bD^{\alpha} v\|_{L^{p_2}}\|u\|_{L^{q_2}}.
\end{equation}
\end{lem}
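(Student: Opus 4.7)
Both estimates are standard consequences of Littlewood--Paley theory, and the plan is to combine a Bony-type paraproduct decomposition with a pointwise square-function characterization of the fractional derivative. Recall the Littlewood--Paley decomposition $f=\sum_j P_j f$ with $P_j$ localizing to $|\xi|\sim 2^j$, and the equivalence $\|\bD^{\alpha} f\|_{L^p}\sim \|(\sum_j 2^{2j\alpha}|P_j f|^2)^{1/2}\|_{L^p}$ for $1<p<\infty$, valid for any $\alpha\in\R$. This reduces matters to estimating dyadic pieces.

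For the fractional Leibniz rule \eqref{chain2}, I would use Bony's decomposition $uv=T_u v+T_v u+R(u,v)$, where $T_u v=\sum_j S_{j-2}u\cdot P_j v$ and $R(u,v)=\sum_{|j-k|\le 2}P_j u\cdot P_k v$. The spectrum of $S_{j-2}u\cdot P_j v$ sits in an annulus of size $\sim 2^j$, so applying $\bD^{\alpha}$ costs a factor $2^{j\alpha}$. Using the pointwise bound $|S_{j-2}u|\lesssim M(u)$ by the Hardy--Littlewood maximal function and the Fefferman--Stein vector-valued inequality, one gets $\|\bD^{\alpha} T_u v\|_{L^p}\lesssim \|u\|_{L^{q_2}}\|\bD^{\alpha} v\|_{L^{p_2}}$; symmetrically for $T_v u$. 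The remainder has spectrum of size $\lesssim 2^j$, and summation over $j$ converges using $\alpha>0$.

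For the chain rule \eqref{chain1}, I would invoke Stein's pointwise characterization
\begin{equation*}
\bD^{\alpha} f(x)\sim\left(\int_{\R^N}\frac{|f(x+y)-f(x)|^2}{|y|^{N+2\alpha}}\,dy\right)^{1/2},\qquad 0<\alpha<1.
\end{equation*}
The mean-value identity gives
\begin{equation*}
|F(u(x+y))-F(u(x))|\le |u(x+y)-u(x)|\int_0^1\bigl|F'\bigl(u(x)+\tau(u(x+y)-u(x))\bigr)\bigr|\,d\tau.
\end{equation*}
Controlling the inner integral by the centered maximal function $M(F'(u))(x)$ on the ball of radius $|y|$ and substituting back yields the pointwise estimate $\bD^{\alpha} F(u)(x)\lesssim M(F'(u))(x)\cdot \bD^{\alpha} u(x)$. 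Hölder's inequality and the $L^{p_1}$-boundedness of $M$ (valid for $1<p_1\le \infty$) conclude. The endpoint $\alpha=1$ reduces to the classical chain rule.

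The main obstacle is the chain rule: justifying that the pointwise average along the segment $[u(x),u(x+y)]$ is really dominated by a Hardy--Littlewood maximal function of $F'(u)$ at $x$ is a geometric argument using Fubini and the convex combination structure, and it requires the hypothesis $\alpha\leq 1$ crucially, since for $\alpha>1$ a simple mean-value inequality no longer suffices and one must bring in higher order commutators. A secondary subtlety is that $F$ is only $C^1$, so one cannot differentiate twice; this is circumvented by working throughout with differences rather than derivatives of $F$.
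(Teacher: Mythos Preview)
The paper does not prove this lemma; it is quoted from Christ--Weinstein \cite{cw}. Your treatment of the Leibniz rule \eqref{chain2} via the Bony paraproduct decomposition is standard and correct.

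For the chain rule \eqref{chain1} there is a genuine gap. After the mean-value step you assert that
\[
\int_0^1 \bigl|F'\bigl(u(x)+\tau(u(x+y)-u(x))\bigr)\bigr|\,d\tau\ \lesssim\ M(F'\circ u)(x).
\]
This is false in general: the argument of $F'$ is a point on the chord in $\C$ joining the \emph{values} $u(x)$ and $u(x+y)$, not the value of $u$ at a spatial point of $B(x,|y|)$. For complex-valued $u$ there is no intermediate-value principle, so that chord may traverse parts of $\C$ the range of $u$ never visits. Concretely, take $N=1$ and $u(x)=e^{ix}$: then $|u|\equiv1$, yet the chord from $u(0)=1$ to $u(\pi)=-1$ passes through $0$. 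Choosing $F\in C^1(\C)$ with $|F'|$ large near $0$ and small on the unit circle makes the displayed integral large while $M(F'\circ u)$ is uniformly small. Hence the pointwise bound $S_\alpha(F\circ u)\lesssim M(F'\circ u)\cdot S_\alpha u$ you are aiming for is false, and no ``Fubini plus convex-combination'' argument can produce it. This is exactly the obstacle you flag as the main one, and your sketch does not overcome it.

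The Christ--Weinstein argument avoids any such pointwise comparison. It telescopes $F(u)=\sum_j\bigl[F(S_{j+1}u)-F(S_ju)\bigr]$ and writes each increment as $(\Delta_{j+1}u)\,m_j$ with $m_j=\int_0^1 F'(S_ju+t\Delta_{j+1}u)\,dt$. The decisive observation is that $m_j$, being a $C^1$ function of band-limited inputs, is slowly varying at scale $2^{-j}$; this frequency regularity of the multipliers $m_j$---and not a maximal-function domination by $M(F'\circ u)$---is what allows the Littlewood--Paley sum to close and produces $\|F'(u)\|_{L^{p_1}}$ on the right-hand side.
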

The next result gives a Leibniz rule for fractional derivatives \cite{kpv}.
\begin{lem}\label{lbnz}
Let $N\geq1$, $s_1+s_2:=s\in (0,1)$, such that $ s_1,s_2\in (0,1)$ and $1<p,p_1,p_2<\infty$ such that $\frac{1}{p}=\frac{1}{p_1}+\frac{1}{p_2}.$ 
Then,
\begin{equation}
\label{L-rule}
\|\bD^{s}(uv)-u\bD^{s_1}v-v\bD^{s_2}u\|_{L^{p}}\lesssim\|\bD^{s_1}v\|_{L^{p_1}}\|\bD^{s_2}u\|_{L^{p_2}}.
\end{equation}
Moreover, for $s_1=0$, the value $p_1=\infty$ is allowed.
\end{lem}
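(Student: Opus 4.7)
The plan is to prove this Kato-Ponce type commutator estimate by Fourier/paraproduct analysis. Taking Fourier transforms, one writes
\begin{equation*}
\mathcal{F}\bigl[\bD^{s}(uv)-u\bD^{s_1}v-v\bD^{s_2}u\bigr](\xi)=\int m(\eta,\xi-\eta)\,\widehat{u}(\eta)\,\widehat{v}(\xi-\eta)\,d\eta,
\end{equation*}
with bilinear symbol $m(\eta,\zeta)=|\eta+\zeta|^{s}-|\eta|^{s_2}-|\zeta|^{s_1}$. Setting $w_2=\bD^{s_2}u$ and $w_1=\bD^{s_1}v$, the identity becomes a bilinear multiplier acting on $(w_2,w_1)$ with reduced symbol $\widetilde{m}(\eta,\zeta)=m(\eta,\zeta)|\eta|^{-s_2}|\zeta|^{-s_1}$. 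The goal reduces to verifying that $\widetilde{m}$ is a Coifman-Meyer symbol, after which the required $L^{p_1}\times L^{p_2}\to L^{p}$ estimate follows from the Coifman-Meyer multiplier theorem together with H\"older's inequality.

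First I would perform a Bony paraproduct decomposition $uv=T_{u}v+T_{v}u+R(u,v)$ via a Littlewood-Paley partition. On the low-high piece $T_{u}v$ (frequencies of $u$ dominated by those of $v$), $|\eta|\ll|\zeta|$ so $|\eta+\zeta|^{s}\approx|\zeta|^{s}+s|\zeta|^{s-2}\zeta\cdot\eta+\dots$ by a Taylor expansion, and the leading $|\zeta|^{s_1}$-term coming from $v\bD^{s_2}u\big|_{\text{Fourier}}$ cancels out with the $|\zeta|^{s_1}$-piece of the Taylor expansion; the remainder is $O(|\eta|^{s_2}|\zeta|^{s_1})$ in size and smooth in the usual Mihlin-H\"ormander sense after rescaling, so $\widetilde{m}$ is bounded with the appropriate homogeneous-derivative bounds there. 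The symmetric low-high region $T_{v}u$ is treated analogously. On the high-high resonant region $R(u,v)$, the frequencies $|\eta|$ and $|\zeta|$ are comparable and all three terms $|\eta+\zeta|^{s},|\eta|^{s_2},|\zeta|^{s_1}$ are simply bounded by $C|\eta|^{s_1}|\zeta|^{s_2}$, giving $|\widetilde{m}|\lesssim 1$ with smoothness inherited from that of $|\cdot|^{s}$ away from the origin. Once the Coifman-Meyer bounds on $\widetilde{m}$ are in hand, the desired product estimate follows directly.

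For the endpoint $s_1=0$ with $p_1=\infty$, classical Coifman-Meyer does not apply. I would replace $L^{\infty}$ by $BMO$ via a paraproduct argument: for the $T_{u}v$ piece, $\bD^{s}(T_{u}v)-T_{u}\bD^{s}v$ defines a paraproduct with the Mihlin-type symbol analyzed above, and such operators are bounded $L^{\infty}\times L^{p}\to L^{p}$ because the low-frequency factor acts by pointwise multiplication (up to smoothing); the other two pieces are handled symmetrically.

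The main obstacle is the symbol analysis on the resonant diagonal $\eta+\zeta=0$, where $|\eta+\zeta|^{s}$ has singular derivatives. The key observation here is that this diagonal lies inside the high-high region only when $\eta\approx-\zeta$, a set of measure zero in frequency, and the cancellation structure of $m$ keeps $\widetilde{m}$ bounded even there; this is precisely the delicate step of Kenig-Ponce-Vega \cite{kpv}, so in practice one quotes their result rather than re-deriving it.
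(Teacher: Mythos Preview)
The paper does not prove this lemma; it simply quotes the result from Kenig--Ponce--Vega \cite{kpv}. Your overall strategy---reduce to a bilinear Coifman--Meyer multiplier via a paraproduct decomposition---is indeed the route taken in \cite{kpv}, so you are retracing the cited source rather than offering a new argument.

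There is, however, a concrete error in your low-high symbol analysis. You claim that ``the leading $|\zeta|^{s_1}$-term coming from $v\bD^{s_2}u$ cancels out with the $|\zeta|^{s_1}$-piece of the Taylor expansion.'' Neither half is correct: the contribution of $v\bD^{s_2}u$ to $m$ is $|\eta|^{s_2}$ ($\eta$ being the frequency of $u$), and the leading term in the expansion of $|\eta+\zeta|^{s}$ about $\zeta$ is $|\zeta|^{s}$, not $|\zeta|^{s_1}$. With $m(\eta,\zeta)=|\eta+\zeta|^{s}-|\eta|^{s_2}-|\zeta|^{s_1}$ as written, there is \emph{no} cancellation when $|\eta|\ll|\zeta|$: one has $m\sim|\zeta|^{s}$, hence $\widetilde m\sim|\zeta|^{s_2}/|\eta|^{s_2}$, which is unbounded. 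So $\widetilde m$ is not a Coifman--Meyer symbol and your reduction fails.

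This traces back to what appears to be a misprint in the stated lemma. The actual KPV estimate---and the form the paper itself applies later---has $\bD^{s}$ (not $\bD^{s_1},\bD^{s_2}$) acting on $v$ and $u$ on the left:
\[
\|\bD^{s}(uv)-u\,\bD^{s}v-v\,\bD^{s}u\|_{L^{p}}\lesssim\|\bD^{s_1}v\|_{L^{p_1}}\|\bD^{s_2}u\|_{L^{p_2}}.
\]
The printed version also fails a scaling test: replacing $u,v$ by $u(\lambda\cdot),v(\lambda\cdot)$ and sending $\lambda\to0$, the subtracted terms dominate the left side with a strictly smaller power of $\lambda$ than the right side carries. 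With the corrected left-hand side your paraproduct scheme does go through, since then $|\eta+\zeta|^{s}-|\zeta|^{s}=O(|\eta|\,|\zeta|^{s-1})$ and dividing by $|\eta|^{s_2}|\zeta|^{s_1}$ gives $(|\eta|/|\zeta|)^{1-s_2}$, which is bounded in the low-high region.
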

A useful Sobolev's embedding reads as follows (see \cite[Theorem 6.1.6]{BJ76}).
\begin{lem}
\label{S-emb}
Let $s_1, s_2\in\R$ and $1<p_1\leq p_2<\infty$ such that
$$
s_1-\frac{N}{p_1}=s_2-\frac{N}{p_2}.
$$
Then 
$$
W^{s_1,p_1}(\R^N)\hookrightarrow W^{s_2,p_2}(\R^N).
$$
\end{lem}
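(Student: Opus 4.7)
The plan is to recognize $W^{s,p}(\mathbb{R}^N)$ as a Bessel potential space and reduce the embedding to boundedness of a single convolution operator, which is then handled by the Hardy--Littlewood--Sobolev inequality. Concretely, I would use the characterization $\|u\|_{W^{s,p}}\simeq \|(I-\Delta)^{s/2}u\|_{L^p}$ for $1<p<\infty$ and $s\in\mathbb{R}$, and write
\[
(I-\Delta)^{s_2/2}u \;=\; (I-\Delta)^{-\alpha/2}\,(I-\Delta)^{s_1/2}u,\qquad \alpha:=s_1-s_2.
\]
The scaling hypothesis $s_1-\frac{N}{p_1}=s_2-\frac{N}{p_2}$ together with $p_1\le p_2$ forces $\alpha=N\bigl(\tfrac{1}{p_1}-\tfrac{1}{p_2}\bigr)\geq 0$, which is exactly the exponent at which a gain of $\alpha$ derivatives is traded against a jump in integrability from $p_1$ to $p_2$. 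It therefore suffices to show that
\[
(I-\Delta)^{-\alpha/2}:L^{p_1}(\mathbb{R}^N)\longrightarrow L^{p_2}(\mathbb{R}^N)
\]
is bounded, as this applied to $f=(I-\Delta)^{s_1/2}u\in L^{p_1}$ gives $(I-\Delta)^{s_2/2}u\in L^{p_2}$.

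The case $\alpha=0$ is trivial since then $p_1=p_2$. For $\alpha>0$, I would invoke the Bessel kernel representation $(I-\Delta)^{-\alpha/2}f=G_\alpha * f$ with
\[
G_\alpha(x)=\frac{1}{(4\pi)^{\alpha/2}\Gamma(\alpha/2)}\int_0^\infty e^{-\pi|x|^2/t}\,e^{-t/(4\pi)}\,t^{(\alpha-N)/2-1}\,dt,
\]
and split $G_\alpha=\mathbf{1}_{|x|\le 1}G_\alpha+\mathbf{1}_{|x|>1}G_\alpha$. Standard estimates on the $t$-integral show $G_\alpha(x)\lesssim |x|^{-(N-\alpha)}$ for $|x|\le 1$ and $G_\alpha(x)\lesssim e^{-|x|/2}$ for $|x|>1$. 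For the first piece, domination by the Riesz kernel $|x|^{-(N-\alpha)}$ together with the classical Hardy--Littlewood--Sobolev inequality yields the $L^{p_1}\to L^{p_2}$ bound, precisely because $\frac{1}{p_1}-\frac{1}{p_2}=\frac{\alpha}{N}$. For the second piece, the exponential decay puts the kernel in every $L^r$, so Young's convolution inequality with $1+\frac{1}{p_2}=\frac{1}{r}+\frac{1}{p_1}$ closes the argument.

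The main (and only) real obstacle is the kernel estimate for $G_\alpha$; everything else is a direct application of HLS and Young. An alternative route, if one wishes to avoid kernel computations, is a Littlewood--Paley decomposition: bound the low-frequency block by Bernstein's inequality (trading $L^{p_1}$ for $L^{p_2}$ at the cost of a frequency factor that is absorbed by $\alpha$), and for the high-frequency blocks use $\|(I-\Delta)^{-\alpha/2}P_j f\|_{L^{p_2}}\lesssim 2^{-j\alpha}\|P_j f\|_{L^{p_2}}\lesssim 2^{-j\alpha+jN(1/p_1-1/p_2)}\|P_j f\|_{L^{p_1}}$, which is summable by the scaling identity. Either approach gives the stated embedding, and in the body of the paper this lemma is quoted directly from \cite{BJ76}, so no further detail is required there.
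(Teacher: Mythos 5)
Your main argument is correct, but it is worth noting that the paper does not prove this lemma at all: it is quoted verbatim from Bergh--L\"ofstr\"om \cite[Theorem 6.1.6]{BJ76}, so any proof you supply is by definition a different route. Your route is the standard self-contained one for Bessel-potential spaces: identify $\|u\|_{W^{s,p}}\simeq\|(I-\Delta)^{s/2}u\|_{L^p}$ for $1<p<\infty$ (which is the interpretation of $W^{s,p}$ used throughout the paper and in the cited theorem), observe that the scaling hypothesis gives $\alpha:=s_1-s_2=N\bigl(\tfrac1{p_1}-\tfrac1{p_2}\bigr)\in[0,N)$, and reduce to the boundedness of $(I-\Delta)^{-\alpha/2}:L^{p_1}\to L^{p_2}$, which your splitting of the Bessel kernel handles correctly: the near part is dominated by the Riesz kernel $|x|^{-(N-\alpha)}$ and falls under Hardy--Littlewood--Sobolev (legitimate since $1<p_1<p_2<\infty$ whenever $\alpha>0$), and the exponentially decaying far part is closed by Young's inequality; the degenerate case $\alpha=0$, $p_1=p_2$ is trivial. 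This buys a proof using only HLS and classical kernel bounds, whereas the reference proves the embedding by interpolation-theoretic machinery. One caveat: your proposed Littlewood--Paley alternative does not work as sketched. At the critical scaling $\alpha=N(\tfrac1{p_1}-\tfrac1{p_2})$ the high-frequency blocks satisfy $2^{-j\alpha+jN(1/p_1-1/p_2)}=1$, so there is no geometric decay and the block sum is not ``summable by the scaling identity''; naive summation of Bernstein estimates proves the Besov-scale embedding $B^{s_1}_{p_1,q}\hookrightarrow B^{s_2}_{p_2,q}$, but for the $W^{s,p}$ (Triebel--Lizorkin) scale one must retain the square-function structure or argue as in your main proof. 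Since that alternative is only an aside, the proposal as a whole stands.
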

{The following continuity argument (or bootstrap argument) will also be useful for our purpose. See \cite[Lemma 2.11]{DKM} for a similar statement.
\begin{lem}\cite[Lemma 3.7, p. 437]{Strauss}\\
\label{boots}
Let $\mathbf{I}\subset\R$ be a time interval, and $\mathbf{X} : \mathbf{I}\to [0,\infty)$ be a continuous function satisfying, for every $t\in \mathbf{I}$,
\begin{equation}
		\label{boots1}
		    	\mathbf{X}(t) \leq a + b [\mathbf{X}(t)]^\theta,
		\end{equation}
	where $a,b>0$ and $\theta>1$ are constants. Assume that, for some $t_0\in \mathbf{I}$,
		\begin{equation}
		\label{boots2}
	\mathbf{X}(t_0)\leq a, \quad a\,b^{\frac{1}{\theta-1}} <(\theta-1)\,\theta^{\frac{\theta}{1-\theta}}.
				\end{equation}
		Then, for every $ t\in \mathbf{I}$, we have
		\begin{equation}
		\label{boots3}
		    	\mathbf{X}(t)< \frac{\theta\,a}{\theta-1}.
		\end{equation}
\end{lem}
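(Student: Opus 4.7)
The plan is to convert the implicit bootstrap inequality \eqref{boots1} into a sign condition for a single scalar function, and then close by a connectedness (continuity) argument. Introduce
$$g(x) \;:=\; a + b\,x^{\theta} - x, \qquad x\in[0,\infty).$$
Since $\theta>1$, the function $g$ is strictly convex on $(0,\infty)$, strictly decreasing then strictly increasing, with unique minimum at $x_{\ast} := (b\theta)^{-1/(\theta-1)}$; it satisfies $g(0)=a>0$ and $g(x)\to +\infty$ as $x\to\infty$. A direct computation yields
$$g\!\left(\frac{\theta a}{\theta-1}\right) \;=\; \frac{a}{\theta-1}\left(\frac{\theta^{\theta}}{(\theta-1)^{\theta-1}}\,b\,a^{\theta-1}-1\right),$$
and raising the second inequality in \eqref{boots2} to the power $\theta-1$ rewrites it as $b\,a^{\theta-1}<(\theta-1)^{\theta-1}/\theta^{\theta}$, which is exactly the condition $g\!\left(\tfrac{\theta a}{\theta-1}\right)<0$.

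With this sign information the superlevel set
$$\mathcal S \;:=\; \bigl\{x\ge 0 : g(x)\ge 0\bigr\}$$
splits as a disjoint union $[0,x_{-}]\cup[x_{+},\infty)$, where $x_{-}<x_{+}$ are the two positive roots of $g$ and $\tfrac{\theta a}{\theta-1}\in(x_{-},x_{+})$. Moreover $g(a) = b\,a^{\theta} > 0$ and $g$ is strictly decreasing on $(0,x_{\ast})$, so $a<x_{-}$; combined with the previous step this produces the key chain $a \;<\; x_{-} \;<\; \tfrac{\theta a}{\theta-1}$.

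To conclude, observe that \eqref{boots1} is precisely the statement $g(\mathbf{X}(t))\ge 0$ for every $t\in\mathbf{I}$. Therefore $\mathbf{X}(\mathbf{I})$ is a connected subset of the disconnected set $\mathcal S$, hence entirely contained in one of its two components. The initial condition $\mathbf{X}(t_{0})\le a < x_{-}$ forces that component to be $[0,x_{-}]$, so $\mathbf{X}(t)\le x_{-} < \tfrac{\theta a}{\theta-1}$ for all $t\in\mathbf{I}$, which is \eqref{boots3}. The only non-routine step is the algebraic identification of \eqref{boots2} with $g\!\left(\tfrac{\theta a}{\theta-1}\right)<0$; once that equivalence is in hand, the remainder is a standard two-well continuity argument, and the strictness in \eqref{boots3} is automatic because $x_{-}<\tfrac{\theta a}{\theta-1}$ strictly as soon as $g$ is strictly negative there.
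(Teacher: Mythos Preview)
Your proof is correct. The only tiny wrinkle is the clause ``$g(a)>0$ and $g$ is strictly decreasing on $(0,x_\ast)$, so $a<x_-$'': as written this presupposes $a<x_\ast$, which you have not yet said. The fix is immediate from what you already have: since $\tfrac{\theta a}{\theta-1}\in(x_-,x_+)$ you get $a<\tfrac{\theta a}{\theta-1}<x_+$, so $a\notin[x_+,\infty)$; then $g(a)>0$ forces $a\in[0,x_-]$, and $g(a)>0=g(x_-)$ gives $a<x_-$.

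The paper's argument is different in flavor and noticeably shorter. It does not introduce $g$ or analyze the two-well structure: it simply argues by contradiction that if $\mathbf{X}(t_1)\ge \tfrac{\theta a}{\theta-1}$ for some $t_1$, the intermediate value theorem produces $t_2$ with $\mathbf{X}(t_2)=\tfrac{\theta a}{\theta-1}$, and plugging this value into \eqref{boots1} rearranges to $ab^{1/(\theta-1)}\ge(\theta-1)\theta^{\theta/(1-\theta)}$, contradicting \eqref{boots2}. Your approach is the standard ``convex-function/two-well'' presentation; it is a bit longer but yields the sharper conclusion $\mathbf{X}(t)\le x_-$, and it makes transparent why the threshold $\tfrac{\theta a}{\theta-1}$ works (it is any point at which $g<0$). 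The paper's version trades that extra information for brevity.
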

\begin{proof}
We give here a simpler proof than done in \cite{Strauss}.
We argue by contradiction. Suppose that $\mathbf{X}(t_1)\geq \frac{\theta\,a}{\theta-1}$ for some $t_1 \in \mathbf{I}$. Then $\mathbf{X}(t_0)\leq a<\frac{\theta\,a}{\theta-1}\leq \mathbf{X}(t_1).$ By continuity, there exists $t_2 \in \mathbf{I}$ such that $\mathbf{X}(t_2)=\frac{\theta\,a}{\theta-1}$. This contradicts the second assumption in \eqref{boots2} since $\frac{\theta\,a}{\theta-1}\leq a+ b\bigg(\frac{\theta\,a}{\theta-1}\bigg)^\theta$ imply that  $a\,b^{\frac{1}{\theta-1}} \geq(\theta-1)\,\theta^{\frac{\theta}{1-\theta}}.$ This leads to \eqref{boots3} as desired.
\end{proof}}
The next Gagliardo-Nirenberg type inequality \cite[Theorem 2.2]{pz} is crucial in our proofs.
\begin{lem}\label{GNI}
Let $N\geq 2$, $s\in(0,1)$, $0<b<2s$ and $1<p<1+\frac{4s-2b}{N-2s}$. Then the following sharp Gagliardo-Nirenberg inequality holds
\begin{equation}
\int_{\R^N} |x|^{-b}|u|^{p+1}\,dx
\leq\, K_{opt}\,\|u\|^A\|\bD^s\,u\|^B,\label{Nirenberg}\end{equation}
where 
\begin{equation}
\label{AB}
B:=\frac 1{2s}\bigg(N(p-1) +2b\bigg)\quad\mbox{ and }\quad A:=p+1-B.
\end{equation}
 Moreover, the sharp constant is given by 
\begin{equation}\label{K-opt}
 K_{opt}=\frac{p+1}{A}\left(\frac{A}{B}\right)^{B/2}\,\|Q\|^{1-p},
\end{equation}
where $Q\in H^s$ is the unique non-negative radially symmetric decreasing solution of
\begin{equation}\label{E}
-(-\Delta)^s Q -Q +|x|^{-b}|Q|^{p-1}Q=0.
\end{equation}
Furthermore,
\begin{equation}
\label{K-opt-1}
\|\bD^s\,Q\|=\sqrt{\frac{B}{A}}\,\|Q\| \quad\text{and}\quad
\int_{\R^N}\,|x|^{-b}\,|Q(x)|^{p+1}\,dx=\frac{p+1}{A}\,\|Q\|^2.
\end{equation}
\end{lem}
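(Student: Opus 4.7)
The plan is to use the Weinstein variational approach. Define the Weinstein functional
$$
J(u) := \frac{\|u\|^{A}\,\|\bD^{s}u\|^{B}}{\int_{\R^{N}}|x|^{-b}|u|^{p+1}\,dx},\qquad u\in H^{s}\setminus\{0\},
$$
so that the sharp constant is $K_{opt}=1/\inf J$. The homogeneity exponents $A,B$ are fixed by scaling: under $u_{\mu,\lambda}(x)=\mu\,u(\lambda x)$, the numerator and denominator of $J$ transform with the same power of $\mu$ and $\lambda$ exactly when $B=\frac1{2s}(N(p-1)+2b)$ and $A=p+1-B$. The subcritical assumption $p<1+\frac{4s-2b}{N-2s}$ ensures $0<B<2$, which is what will allow boundedness below and strict positivity of $\inf J$.

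First I would establish a non-sharp version of \eqref{Nirenberg}, showing in particular $\inf J>0$. The key step is to control $\int|x|^{-b}|u|^{p+1}dx$: split the integral into $|x|<1$ and $|x|\geq 1$, apply Hölder with the singular weight $|x|^{-b}\in L^{N/b}_{\mathrm{loc}}+L^{\infty}$ (since $b<2s<N$), and close with the Sobolev embedding $H^{s}\hookrightarrow L^{2N/(N-2s)}$ from \eqref{Sob-s}. This also yields the interpolation with the prescribed exponents $A$ and $B$ via scaling.

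Next I would produce a minimizer. Let $(u_{n})$ be a minimizing sequence for $J$; using the scale invariance of $J$, normalize so that $\|u_{n}\|=\|\bD^{s}u_{n}\|=1$. The sequence is bounded in $H^{s}$. Since the weight $|x|^{-b}$ is radially decreasing, the fractional Pólya–Szegő inequality implies that the symmetric decreasing rearrangement $u_{n}^{*}$ does not increase the numerator while it does not decrease $\int|x|^{-b}|u|^{p+1}$, so we may assume each $u_{n}$ is radial and nonnegative. The radial compact embedding $H^{s}_{rad}\hookrightarrow\hookrightarrow L^{q}$ for $2<q<2N/(N-2s)$, combined with the $|x|^{-b}$ weight (which actually improves compactness near the origin when $b>0$), lets us pass to the limit in the denominator, while the numerator is lower semicontinuous. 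This produces a radial nonnegative minimizer $Q$. The Euler–Lagrange equation, after a scaling $Q\mapsto\mu Q(\lambda\cdot)$ that normalizes the two $L^{2}$-type quantities, becomes exactly the ground-state equation \eqref{E}.

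Finally I would derive \eqref{K-opt-1} from two Pohozaev-type identities. Testing \eqref{E} against $Q$ gives $\|\bD^{s}Q\|^{2}+\|Q\|^{2}=\int|x|^{-b}|Q|^{p+1}$. A second identity comes from differentiating the functional $\lambda\mapsto S(Q(\lambda\cdot))$ at $\lambda=1$, where $S$ is the action, which amounts to the fractional scaling identity and yields $2s\,\|\bD^{s}Q\|^{2}=(N(p-1)+2b)\cdot\frac{1}{p+1}\int|x|^{-b}|Q|^{p+1}-N\|Q\|^{2}$. Solving this linear system delivers the two relations in \eqref{K-opt-1}; plugging $Q$ into $J$ then yields the explicit formula \eqref{K-opt}. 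The hardest step is the compactness of the minimizing sequence (control of vanishing and dichotomy in the presence of the singular weight), and the uniqueness of the radial decreasing ground state for \eqref{E}, which for $s<1$ is genuinely nontrivial and would be invoked from the Frank–Lenzmann–Silvestre theory (as done in \cite{pz}).
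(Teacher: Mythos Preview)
The paper does not prove this lemma; it is quoted directly from \cite[Theorem~2.2]{pz}. Your outline is the standard Weinstein variational scheme, which is precisely the argument carried out in \cite{pz}, and it is essentially correct.

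One slip: the subcritical hypothesis $p<p^{*}$ does \emph{not} force $B<2$. Indeed $B=\frac{1}{2s}(N(p-1)+2b)$ equals $2$ exactly at $p=p_{*}$, and the paper uses $B>2$ throughout the inter-critical range. What $p<p^{*}$ actually gives is $A=p+1-B>0$; it is the positivity of \emph{both} exponents $A$ and $B$ that makes the numerator of $J$ control the denominator and yields $\inf J>0$. Your subsequent steps (non-sharp inequality via H\"older and Sobolev, radial rearrangement, compactness of radial minimizing sequences, Euler--Lagrange plus two Pohozaev relations) go through unchanged once this is corrected. Your written form of the second Pohozaev identity is also slightly off---differentiating $\lambda\mapsto S(Q(\lambda\cdot))$ at $\lambda=1$ yields $(2s-N)\|\bD^{s}Q\|^{2}-N\|Q\|^{2}+\tfrac{2(N-b)}{p+1}P[Q]=0$, and combining this with $\|\bD^{s}Q\|^{2}+\|Q\|^{2}=P[Q]$ produces \eqref{K-opt-1}---but the method is right.

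You correctly flag uniqueness of the radial decreasing ground state as the delicate point. Frank--Lenzmann--Silvestre treats $b=0$; for $b>0$ uniqueness is asserted in \cite{pz}. In practice the present paper only needs the existence of an optimizer and the identities \eqref{K-opt-1} (any two optimizers are related by the scaling symmetry of $J$ and give the same threshold quantities), so this subtlety does not propagate into the main results.
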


Now, let us collect some standard estimates related to the Schr\"odinger equation. The free operator associated to the fractional Schr\"odinger equation is given by
\begin{equation}
\label{FO}
e^{-it\bD^{2s}}\phi:=\mathcal F^{-1}(e^{-it|\xi|^{2s}})*\phi.
\end{equation}

It is classical that \eqref{S} has the following integral formulation
$$u(t)=e^{-it\bD^{2s}}u_0+i\int_0^te^{-i(t-\tau)\bD^{2s}}[|x|^{-b}|u(\tau)|^{p-1}u(\tau)]\,d\tau.$$
The following dispersive estimate can be found in \cite{cox} for instance. 
\begin{equation}\label{free}
\|e^{-it\bD^{2s}}\,\phi\|_{L^r(\R^N)}\leq\frac C{|t|^{N(\frac12-\frac1r)}}\|\bD^{2N(1-s)(\frac12-\frac1r)}\,\phi\|_{L^{r'}(\R^N)}, \quad \forall\; r\geq2,\;\; \forall\; t\neq 0.
\end{equation}
 \begin{defi}
~\begin{enumerate}
\item[1)]
A pair $(q,r)$ is said admissible if $q,r\geq2$ and
$$\frac{4N+2}{2N-1}\leq q\leq\infty,\quad \frac2q+\frac{2N-1}r\leq N-\frac12,$$
or 
$$2\leq q\leq\frac{4N+2}{2N-1},\quad \frac2q+\frac{2N-1}r< N-\frac12.$$
\item[2)]
 $(q,r)\in\Gamma_\gamma$ if it is an admissible pair such that $(N, q, r)\neq(2,2,\infty)$ and 
$$N(\frac12-\frac1r)=\frac{2s}q+\gamma.$$
Moreover, $\Gamma:=\Gamma_0$.
\item[3)]
let $I\subset\R$ be an interval, one denotes the Strichartz spaces
$$S^\gamma(I):=\displaystyle\bigcap_{(q,r)\in\Gamma_\gamma}L^q(I,L^r),\quad S(I):=S^0(I).$$
\end{enumerate}
\end{defi}
Let us now state some Strichartz estimates \cite{gw1,cl}.
\begin{prop}\label{prop2}
Let $N \geq 2$, $\gamma\in\R$ and $u_0\in L^2_{rad}$. Then, 
\begin{enumerate}
\item[1)]
$\|e^{-it\bD^{2s}}u_0\|_{S^\gamma(I)}\lesssim\|\bD^{\gamma}u_0\|;$
\item[2)]
$\|u-e^{-it\bD^{2s}}u_0\|_{S^\gamma(I)}\lesssim\displaystyle\inf_{(\tilde q,\tilde r)\in\Gamma_{-\gamma}(I)}\|i\partial_t u-\bD^{2s}u\|_{L^{\tilde q'}(I,L^{\tilde r'})};$
\item[3)]$\|u\|_{S(I)}\lesssim \|u_0\|+\displaystyle\inf_{(\tilde q,\tilde r)\in\Gamma(I)}\|i\partial_t u-\bD^{2s}u\|_{L^{\tilde q'}(I,L^{\tilde r'})},$ provided that $\frac {N}{2N-1}<s\leq 1$.
\end{enumerate}
\end{prop}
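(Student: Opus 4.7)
The plan is to establish the three estimates via the classical Keel--Tao $TT^\ast$ framework, suitably adapted to the fractional operator using the radial improvement of the dispersive bound. I would start from the dispersive estimate \eqref{free} together with the $L^2$-isometry $\|e^{-it\bD^{2s}}\phi\|=\|\phi\|$; complex interpolation between these gives, for every $r\geq 2$, a decay bound of the form $\|e^{-it\bD^{2s}}\phi\|_{L^r}\lesssim |t|^{-N(1/2-1/r)}\|\bD^{2N(1-s)(1/2-1/r)}\phi\|_{L^{r'}}$. In general this carries a derivative loss of order $2N(1-s)(1/2-1/r)$. For radial data, the stronger dispersive bound of Guo--Wang \cite{gw1} removes (or sharply reduces) this loss, but only under admissibility conditions that exactly match those defining $\Gamma_\gamma$: the thresholds $q\geq (4N+2)/(2N-1)$ paired with $2/q+(2N-1)/r\leq N-1/2$ are precisely the range in which a radial Knapp-type obstruction can be bypassed.

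Given the improved radial dispersive estimate, part~(1) follows by a standard $TT^\ast$ argument combined with the Hardy--Littlewood--Sobolev inequality in the time variable; this yields boundedness of $e^{-it\bD^{2s}}\colon L^2_{rad}\to L^q(I,L^r)$ for every admissible pair $(q,r)$. The scaling identity $N(1/2-1/r)=2s/q+\gamma$ built into the definition of $\Gamma_\gamma$ is exactly what turns the residual derivative loss into the factor $\bD^\gamma u_0$ on the right-hand side; taking the supremum over $(q,r)\in\Gamma_\gamma$ produces the $S^\gamma(I)$-bound.

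For the inhomogeneous estimate~(2), I would combine the dual form of~(1) with the Christ--Kiselev lemma. The dual of the boundedness $L^2_{rad}\to L^q(I,L^r)$ with $(q,r)\in\Gamma_\gamma$ is the boundedness $L^{\tilde q'}(I,L^{\tilde r'})\to L^2_{rad}$ with $(\tilde q,\tilde r)\in\Gamma_{-\gamma}$, and the resulting bilinear estimate, passed through Christ--Kiselev to handle the retarded time integral in the Duhamel representation of $u-e^{-it\bD^{2s}}u_0$, delivers~(2). Part~(3) is then the specialization $\gamma=0$: the hypothesis $s>N/(2N-1)$ is precisely what guarantees that $\Gamma=\Gamma_0$ is non-empty and contains admissible pairs for both the data and the source term, so combining (1) and (2) with $\gamma=0$ produces the claim.

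The step I expect to be the most delicate is the careful bookkeeping of admissibility: verifying that the strict versus non-strict inequalities defining $\Gamma_\gamma$ are respected in each application of the $TT^\ast$ and Christ--Kiselev arguments, that the excluded endpoint $(N,q,r)=(2,2,\infty)$ never re-enters via duality, and that the sharp threshold $s>N/(2N-1)$ in part~(3) is exactly what is required for both exponent pairs. Once the radial dispersive improvement of \cite{gw1} is invoked as a black box, the remaining arguments are essentially standard, which is consistent with the authors merely citing \cite{gw1,cl}.
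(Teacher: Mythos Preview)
Your outline is a reasonable sketch of how radial Strichartz estimates for the fractional Schr\"odinger propagator are established, but note that the paper does not actually supply its own proof of this proposition: it simply cites \cite{gw1,cl} and states the result. Your description of the $TT^\ast$ framework, the radial dispersive improvement of Guo--Wang, duality combined with Christ--Kiselev for the inhomogeneous estimate, and the role of the threshold $s>N/(2N-1)$ in ensuring $\Gamma$ is non-empty is consistent with the approach in those references, so there is nothing to compare against beyond the citation.
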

\subsection{Auxiliary results}
\label{S2-2}
Using a contraction mapping technique based on Strichartz estimates, the Cauchy problem \eqref{S} is locally well-posed in the energy space \cite[Proposition 3]{pz}.
\begin{prop}\label{prop0}
Let $N\geq2$, $s\in(\frac{N}{2N-1},1)$, $0<b<2s$, $1< p< p^*$ and $ u_0\in  H_{rad}^s$. Then, there exist $0<T^*\leq\infty$ and a unique maximal solution ${u} \in C ([0, T^*), H_{rad}^s)$ to \eqref{S}. 
Moreover, the solution $u$ satisfies the conservation of mass and energy \eqref{Mass} and \eqref{Energy}.
\end{prop}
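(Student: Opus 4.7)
\medskip

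\noindent\textbf{Proof proposal.}
The plan is a standard contraction mapping argument in an appropriate Strichartz-type space, combined with a spatial splitting of the weight $|x|^{-b}$ to deal with the inhomogeneous nonlinearity. Fix an admissible pair $(q,r)\in\Gamma$ adapted to the scaling: by the inclusion $H^s\hookrightarrow W^{s,r}$ (via Lemma~\ref{S-emb}) together with Proposition~\ref{prop2}(3), it is enough to choose $(q,r)$ so that the Duhamel term
\[
\Phi(u)(t):=e^{-it\bD^{2s}}u_0+i\int_0^t e^{-i(t-\tau)\bD^{2s}}\bigl[|x|^{-b}|u(\tau)|^{p-1}u(\tau)\bigr]\,d\tau
\]
is well controlled in the space $X_{T,M}:=\{u\in C([0,T];H^s_{rad})\cap L^q([0,T];W^{s,r}):\ \|u\|_{X_T}\le M\}$, endowed with the complete metric inherited from $S^0(I)\cap S^s(I)$. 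The radial assumption is used to invoke the full range of Strichartz estimates without loss of regularity (Proposition~\ref{prop2}), which requires $s>\tfrac{N}{2N-1}$.

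The core step is the nonlinear estimate
\[
\bigl\| |x|^{-b}|u|^{p-1}u\bigr\|_{L^{\tilde q'}([0,T];\,W^{s,\tilde r'})}
\ \lesssim\ T^{\theta}\,\|u\|_{X_T}^{p},
\]
for some $\theta>0$ and a dual admissible pair $(\tilde q',\tilde r')$. To obtain it, I would split the singular weight as $|x|^{-b}=|x|^{-b}\chi_{\{|x|\le1\}}+|x|^{-b}\chi_{\{|x|>1\}}$. On the exterior region the weight is bounded, and standard Sobolev embedding (\ref{Sob-s}) combined with the fractional chain rule (\ref{chain1}) controls the nonlinearity in $W^{s,r'}$. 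On the interior region the weight is locally integrable for $b<N$; picking Lebesgue exponents via Hölder and using the fractional Leibniz rule (\ref{L-rule}) to distribute $\bD^s$ between $|x|^{-b}\chi_{\{|x|\le1\}}$ (a Schwartz-type multiplier after smoothing, controlled through \eqref{F-Sob-I} and boundedness of $\bD^s$ on $L^p$ of bump-like functions) and $|u|^{p-1}u$, one reduces matters to bounds of the form $\|u\|_{L^{q_1}L^{r_1}}\,\|\bD^s u\|_{L^{q_2}L^{r_2}}^{p-1}$. Here the hypothesis $p<p^\ast=1+\tfrac{2(2s-b)}{N-2s}$ ensures that such a scaling-compatible choice exists; the assumption $0<b<2s$ guarantees that the singular factor has positive integrable power in the relevant Hölder exponent.

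With the nonlinear estimate in hand, Proposition~\ref{prop2}(1)--(2) together with Lemma~\ref{boots} (the bootstrap/continuity lemma) shows that $\Phi$ maps a ball $X_{T,M}$ into itself and is a contraction for $M$ depending on $\|u_0\|_{H^s}$ and $T$ sufficiently small. The Banach fixed-point theorem yields a unique local solution $u\in C([0,T];H^s_{rad})$, and standard extension gives a maximal existence time $T^\ast\in(0,\infty]$. Conservation of mass follows by pairing the equation with $u$ in $L^2$ after regularising the data (approximate $u_0$ by $H^{2s}$ data, use $C^1_t$ regularity there, then pass to the limit by Strichartz stability). Conservation of energy is obtained the same way by pairing with $\partial_t u$ after mollification, using the fact that on the regularised level $\tfrac{d}{dt}E[u]=0$ by a direct computation with $\bD^s$ (symmetric) and the focusing potential term.

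The main obstacle is, as expected, the nonlinear estimate with the singular weight: one has to balance the fractional derivative falling on $|x|^{-b}$ near the origin against the integrability of that weight, while simultaneously choosing admissible pairs that are compatible with the dispersive estimate \eqref{free} (which loses $2N(1-s)(\tfrac12-\tfrac1r)$ derivatives). The strict condition $s>\tfrac{N}{2N-1}$ is precisely what allows Strichartz estimates without regularity loss in the radial setting and therefore closes the fixed-point iteration. Once this estimate is established, the rest of the argument is routine.
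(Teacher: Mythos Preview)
The paper does not actually give a self-contained proof of this proposition; it simply records it as a known result and cites \cite[Proposition~3]{pz}, stating only that local well-posedness follows from ``a contraction mapping technique based on Strichartz estimates''. Your outline is therefore in the same spirit as the cited argument: run a fixed-point scheme for the Duhamel map in a Strichartz space, split the weight into $|x|\le 1$ and $|x|>1$, and close with H\"older plus the fractional chain/Leibniz rules. So at the level of strategy you are aligned with what the paper invokes.

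That said, a few of the technical devices you reach for are not the right ones and would not close as written. The embedding $H^s\hookrightarrow W^{s,r}$ does not follow from Lemma~\ref{S-emb}: that lemma requires $s_1-\tfrac{N}{p_1}=s_2-\tfrac{N}{p_2}$, which for equal smoothness indices forces $r=2$. What one actually uses is $W^{s,r_1}\hookrightarrow L^{r_2}$ with a drop in regularity, or simply $H^s\hookrightarrow L^r$ for $2\le r\le \tfrac{2N}{N-2s}$. More importantly, the factor $|x|^{-b}\chi_{\{|x|\le1\}}$ is genuinely singular at the origin and cannot be treated as a ``Schwartz-type multiplier after smoothing''; neither \eqref{F-Sob-I} nor boundedness of $\bD^s$ on bump functions is the relevant mechanism. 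The standard route (as in \cite{pz} and the later estimates in this paper) is to note that $\bD^s$ acting on $|x|^{-b}$ produces essentially $|x|^{-b-s}$, and then to absorb this by H\"older with $|x|^{-b-s}\in L^\mu_{\rm loc}$ for $\mu<\tfrac{N}{b+s}$, which is where the conditions $0<b<2s<N$ and $p<p^*$ enter. Finally, Lemma~\ref{boots} is a continuation/bootstrap tool and plays no role in establishing that $\Phi$ is a contraction on a small ball; the smallness of $T^\theta$ alone does that job. None of these points breaks the overall scheme, but they should be corrected if you intend to write out the argument in full.
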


In trying to apply the arguments in \cite{bhl} and obtain a localized Morawetz estimate, one encounters serious difficulties due the nonlocal operator $(-\Delta)^s=\bD^{2s}$. To handle this difficulty, we use the following representation known as Balakrishnan's formula
\begin{equation}
    \label{Bala}
 \bD^{2s}=(-\Delta)^s=\frac{\sin\pi\,s}{s}\,\int_0^\infty\,m^{s-1}\,\bigg(\frac{-\Delta}{-\Delta+m}\bigg)\,dm.   
\end{equation}
We define the auxiliary function $u_m$, for $m>0$, by
\begin{equation}
\label{u-m}
u_m:=c_s\,R_m\,u
:=c_s(m-\Delta)^{-1}\,u:=c_s\,{\mathcal F}^{-1}\bigg(\frac{\what{u}(\xi)}{m+|\xi|^2}\bigg),
\end{equation}
where
\begin{equation*}
\label{cs}
c_s=\sqrt{\frac{\sin\pi\,s}{\pi}}.
\end{equation*}
The following formula can be easily derived by using Plancherel's and Fubini's theorem as in \cite[(2.12)]{bhl}
\begin{equation}
\label{2-12}
    s\|\bD^{s}u\|^2=\int_0^\infty m^s\int_{\R^N}|\nabla u_m|^2\,dx\,dm.
\end{equation}
Here and hereafter, we denote by $B(R):=\{x\in\R^N;\;|x|\leq R\}$ the ball of $\R^N$ centered at the origin and with radius $R>0$. Let $\psi\in C_0^\infty(\R^N)$ be a radial bump function such that
\begin{equation}
 \label{bump-psi} 
 \psi=1\quad\mbox{on}\quad B\left(\frac12\right),\quad\psi=0 \quad \mbox{for}\quad|x|\geq 1\quad\mbox{and}\quad 0\leq\psi\leq 1.
\end{equation}
For $R>0$, define
\begin{equation}
\label{psi-R}
\psi_{R}(x)=\psi\left(\frac{|x|}{R}\right).
\end{equation}
\begin{lem}
\label{IBP}
Let $\varphi\in C^\infty_0(\R^N)$ be a real-valued function and $v\in H^1(\R^N)$. Then
\begin{equation}
    \label{IBP1}
    \int_{\R^N}\,|\nabla(\varphi\,v)|^2\,dx=\int_{\R^N}\,\varphi^2|\nabla\,v|^2\,dx-\int_{\R^N}\,\varphi\,\Delta\varphi\,|v|^2\,dx,
\end{equation}
\begin{eqnarray}
\nonumber
|\nabla(\varphi\,v)_m|^2&=&|\nabla(\varphi\,v_m)|^2-
\nabla(\varphi\,v_m)\cdot\nabla(R_m[-\Delta,\varphi]\bar {v}_m)\\
\label{phivm}&-&\nabla(R_m[-\Delta,\varphi]v_m)\cdot\nabla(\varphi\bar{v})_m
\end{eqnarray}
and
\begin{eqnarray}
   \nonumber
   s\|\bD^{s}(\varphi\,v)\|^2
   &=&{ \int_0^\infty\int_{\R^N} m^s|\nabla(\varphi\,v)_m|^2\,dx\,dm-\int_0^\infty\int_{\R^N} m^s|\nabla(\varphi\,v_m)|^2\,dx\,dm}\\
    \label{loc-frac-1}
   &+& \int_0^\infty\,\int_{\R^N}\,m^s\,\varphi^2|\nabla\,v_m|^2\,dx\,dm-\int_0^\infty\,\int_{\R^N}\,m^s\,\varphi\,\Delta\varphi\,|v_m|^2\,dx\,dm.
\end{eqnarray}
\end{lem}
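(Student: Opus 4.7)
The plan is to prove the three identities in order, each building on the previous. First, I would establish \eqref{IBP1} by a direct product-rule computation: expanding $\nabla(\varphi v)=\varphi\nabla v+v\nabla\varphi$, squaring, and noting that the cross term reduces to $\varphi\,\nabla\varphi\cdot\nabla|v|^2$ via $2\mathrm{Re}(\bar v\,\nabla v)=\nabla|v|^2$. Integrating by parts against $\mathrm{div}(\varphi\nabla\varphi)=|\nabla\varphi|^2+\varphi\,\Delta\varphi$ cancels the $|v|^2|\nabla\varphi|^2$ contribution and leaves exactly the stated identity.

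For \eqref{phivm}, the key is to extract the commutator $[-\Delta,\varphi]$ from the resolvent $R_m=(m-\Delta)^{-1}$. Applying $m-\Delta$ to $\varphi R_m w$ and invoking the Leibniz formula for the Laplacian yields
\[
(m-\Delta)(\varphi R_m w)=\varphi w+[-\Delta,\varphi]R_m w,
\]
whence $R_m(\varphi w)=\varphi R_m w-R_m[-\Delta,\varphi]R_m w$. Multiplying by $c_s$ and specializing to $w=v$ and $w=\bar v$ gives the commutator identities
\[
(\varphi v)_m=\varphi v_m-R_m[-\Delta,\varphi]v_m,\qquad (\varphi\bar v)_m=\varphi\bar v_m-R_m[-\Delta,\varphi]\bar v_m.
\]
Since $R_m$ commutes with complex conjugation, $|\nabla(\varphi v)_m|^2=\nabla(\varphi v)_m\cdot\nabla(\varphi\bar v)_m$. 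Substituting the two decompositions produces four dot products; I would then collapse the last two into the single term $-\nabla(R_m[-\Delta,\varphi]v_m)\cdot\nabla(\varphi\bar v)_m$ by reassembling the second decomposition in reverse, which condenses the expansion into the three terms stated in \eqref{phivm}.

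Finally, \eqref{loc-frac-1} follows by applying Balakrishnan's formula \eqref{2-12} to $u=\varphi v$:
\[
s\|\bD^{s}(\varphi v)\|^2=\int_0^\infty m^s\int_{\R^N}|\nabla(\varphi v)_m|^2\,dx\,dm,
\]
then adding and subtracting $\int_0^\infty m^s\int|\nabla(\varphi v_m)|^2\,dx\,dm$ and applying \eqref{IBP1} with $v$ replaced by $v_m$ to the subtracted integrand. The hard part is essentially bookkeeping, most notably the complex-conjugation accounting in the four-term expansion underlying \eqref{phivm}; the only analytic subtlety is justifying the integration by parts and commutator manipulations, but since $\varphi\in C^\infty_0$ and $R_m$ provides two extra derivatives, every operation is legitimate for $v\in H^1$.
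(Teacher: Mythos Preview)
Your proposal is correct and follows essentially the same route as the paper: the integration-by-parts for \eqref{IBP1}, the commutator identity $(\varphi v)_m=\varphi v_m-R_m[-\Delta,\varphi]v_m$ (which the paper derives via $[\varphi,R_m]=R_m[-\Delta,\varphi]R_m$) leading to the three-term expansion \eqref{phivm}, and then the add--subtract trick combined with \eqref{IBP1} applied to $v_m$ to obtain \eqref{loc-frac-1}. The bookkeeping you describe for collapsing the four dot products into three is exactly the substitution pattern the paper uses.
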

\begin{proof}
 The proof of \eqref{IBP1} is straightforward and uses integration by parts. In particular, we obtain the useful identity
 \begin{eqnarray}
 \nonumber
 \int_0^\infty\int_{\R^N} m^s|\nabla(\varphi\,v_m)|^2\,dx\,dm&=&\int_0^\infty\int_{\R^N} m^s\varphi^2|\nabla v_m|^2\,dx\,dm\\
 \label{phivm1}
 &-&\int_0^\infty\int_{\R^N}m^s\varphi\Delta\varphi|v_m|^2\,dx\,dm.
 \end{eqnarray}
 To prove \eqref{phivm}, let us write using computation done in the proof of \cite[Lemma 4.8]{syz},
\begin{eqnarray*}
\nabla(\varphi\,v)_m
&=&\nabla(\varphi\,v_m)-\nabla[\varphi,R_m]v\\
&=&\nabla(\varphi\,v_m)-\nabla(R_m[-\Delta+m,\varphi]R_m\,v)\\
&=&\nabla(\varphi\,v_m)-\nabla(R_m[-\Delta,\varphi]v_m).
\end{eqnarray*}
Hence
\begin{eqnarray}
\label{nablaphivm}
|\nabla(\varphi\,v)_m|^2
&=&\nabla(\varphi\bar {v})_m\Big(\nabla(\varphi\,v_m)-\nabla(R_m[-\Delta,\varphi]v_m)\Big)\\ \nonumber
&=&\nabla(\varphi\,v_m)\Big(\nabla(\varphi\bar{v}_m)-\nabla(R_m[-\Delta,\varphi]\bar{v}_m)\Big)-\nabla(R_m[-\Delta,\varphi]v_m)\nabla(\varphi\bar{v})_m\\
\nonumber
&=&|\nabla(\varphi\,v_m)|^2-\nabla(\varphi\,v_m)\nabla(R_m[-\Delta,\varphi]\bar{v}_m)-\nabla(R_m[-\Delta,\varphi]v_m)\nabla(\varphi\bar{v})_m.
\end{eqnarray}
Finally, the proof of \eqref{loc-frac-1} follows easily from \eqref{phivm} and \eqref{phivm1}.
\end{proof}

Now, let us give an improvement of \cite[Lemma 4.2]{syz}.
\begin{lem}\label{improv4.2}
Let $N\geq 2$, $\theta\in (0,1)$ and $u\in L^2(\R^N)$. Then 
\begin{equation}
\label{R-theta}
\bigg|\int_0^\infty\int_{\R^N}\,m^{\theta}\,\psi_R\Delta\psi_R|u_m|^2\,dx\,dm\bigg|\leq C\,R^{-2\theta},
\end{equation}
where $u_m$ given by \eqref{u-m} and $C=C({N},s,\theta, \psi, \|u\|)$ is a positive constant depending only on $N, s, \theta, \psi$ and $\|u\|$.
\end{lem}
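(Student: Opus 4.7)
The plan is to represent $u_m$ via the subordination identity $(m-\Delta)^{-1}=\int_0^\infty e^{-mt}e^{t\Delta}\,dt$ and then trade the $L^2$-contractivity of the heat semigroup against its $L^2\to L^\infty$ dispersive decay, balanced at the scale $t\simeq R^2$. Since $|\psi_R\Delta\psi_R|\leq C_\psi\,R^{-2}\,\mathbf{1}_{\{|x|\leq R\}}$, it suffices to show
\[
\int_{|x|\leq R}\int_0^\infty m^\theta\,|u_m(x)|^2\,dm\,dx\;\leq\;C\,\|u\|^2\,R^{2-2\theta}.
\]

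From the subordination formula one has $u_m(x)=c_s\int_0^\infty e^{-mt}(e^{t\Delta}u)(x)\,dt$ (immediate on the Fourier side), and the Cauchy-Schwarz inequality in $t$ with weight $e^{-mt}$ yields the pointwise bound
\[
|u_m(x)|^2\;\leq\;\frac{c_s^2}{m}\int_0^\infty e^{-mt}\,|(e^{t\Delta}u)(x)|^2\,dt.
\]
Multiplying by $m^\theta$, interchanging the order of integration (legitimate by nonnegativity) and applying the elementary identity $\int_0^\infty m^{\theta-1}e^{-mt}\,dm=\Gamma(\theta)\,t^{-\theta}$ gives
\[
\int_0^\infty m^\theta|u_m(x)|^2\,dm\;\leq\;c_s^2\,\Gamma(\theta)\int_0^\infty t^{-\theta}\,|(e^{t\Delta}u)(x)|^2\,dt.
\]

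The argument is completed by integrating in $x$ over $B(R)$ and splitting the $t$-integral at $t=R^2$. On $t\leq R^2$ I use the $L^2$-contractivity $\|e^{t\Delta}u\|\leq\|u\|$, producing a contribution $\|u\|^2\int_0^{R^2}t^{-\theta}\,dt\lesssim\|u\|^2\,R^{2-2\theta}$. On $t>R^2$ I use the standard bound $\|e^{t\Delta}u\|_{L^\infty}\leq C\,t^{-N/4}\|u\|$ (which follows from $\|K_t\|_{L^2}\lesssim t^{-N/4}$ for the heat kernel) together with $|B(R)|\lesssim R^N$ to get $\int_{|x|\leq R}|e^{t\Delta}u|^2\,dx\lesssim R^N t^{-N/2}\|u\|^2$; the condition $\theta+N/2>1$, automatic for $N\geq 2$ and $\theta\in(0,1)$, then yields $R^N\int_{R^2}^\infty t^{-\theta-N/2}\,dt\lesssim R^{2-2\theta}$. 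The two contributions match, and the extra $R^{-2}$ prefactor from $\psi_R\Delta\psi_R$ finally produces \eqref{R-theta} with a constant depending only on $N,s,\theta,\psi$ and $\|u\|$. The mildly delicate ingredient is identifying the correct splitting scale $t\simeq R^2$, dictated by the balance of the two heat-semigroup estimates; once this is in place the remaining calculations are routine. Note that the entire argument only uses $u\in L^2$, which is precisely the improvement over \cite[Lemma 4.2]{syz}.
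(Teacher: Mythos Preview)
Your proof is correct and takes a genuinely different route from the paper. The paper works entirely on the Fourier side: it uses the multiplier bound $\|\bD^{\alpha}u_m\|\lesssim m^{\alpha/2-1}\|u\|$, splits the $m$-integral at a threshold $M$, controls the large-$m$ part directly via $\|u_m\|\lesssim m^{-1}\|u\|$ and $\|\Delta\psi_R\|_\infty\lesssim R^{-2}$, and for the small-$m$ part applies H\"older and the Sobolev embedding $\|u_m\|_{\frac{2N}{N-2\alpha}}\lesssim\|\bD^{\alpha}u_m\|$ with $\alpha=1-\theta/2$; the two pieces are then balanced at $M=R^{-2}$.

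Your approach instead passes to the heat-semigroup representation and splits in the subordination time $t$ at the scale $t=R^2$, trading the $L^2$-contraction of $e^{t\Delta}$ against its $L^2\to L^\infty$ smoothing. The two arguments are essentially dual (the balance points $M=R^{-2}$ and $t=R^2$ reflect the $m\sim t^{-1}$ relationship in the subordination), but yours avoids the Sobolev embedding entirely and is arguably more self-contained; the paper's version stays closer to the Fourier machinery already in place and reuses the large-$m$ estimate from \cite{syz}. Both achieve the same improvement, namely requiring only $u\in L^2$.
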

\begin{proof}
Note that, for any $0\leq \alpha<2$, we have
\begin{equation}
\label{um-alpha}
\|\bD^{\alpha}u_m\|\lesssim\, m^{\frac{\alpha}{2}-1}\,\|u\|.
\end{equation}
Indeed, by \eqref{u-m}, we get
\begin{eqnarray*}
\|\bD^{\alpha}u_m\|&=&c_s\bigg\|\frac{|\xi|^\alpha}{m+|\xi|^2}\,\what{u}(\xi)\bigg\|\\
&\leq&{C_{N,s}}\bigg\|\frac{t^\alpha}{m+t^2}\bigg\|
_{L^\infty(\R_+)}\,\|u\|\\
&\leq&{C_{N,s}} m^{\frac{\alpha}{2}-1}\,\|u\|.
\end{eqnarray*}
Next, for $M>0$ we write 
$$
\bigg|\int_0^\infty\int_{\R^N}\,m^{\theta}\,\psi_R\Delta\psi_R|u_m|^2\,dx\,dm\bigg|{\leq}(I)+(II),
$$
where
\begin{eqnarray*}
(I)&:=&\bigg|\int_0^M\,\int_{\R^N}\,m^{\theta}\,\psi_R\Delta\psi_R|u_m|^2\,dx\,dm\bigg|,\\
(II)&:=&\bigg|\int_M^\infty\,\int_{\R^N}\,m^{\theta}\,\psi_R\Delta\psi_R|u_m|^2\,dx\,dm\bigg|.
\end{eqnarray*}
As in \cite[Lemma 4.2]{syz}, we have 
$$
(II)\lesssim R^{-2}\, M^{\theta-1}.
$$
To handle the first term $(I)$ let us choose {$\alpha=1-\frac{\theta}{2}$} and write by H\"older's inequality, \eqref{Sob-s} and \eqref{um-alpha},
\begin{eqnarray*}
(I)&\leq&\int_0^M\,m^{\theta}\|\psi_R\|_{\frac {N}{2\alpha}}\|\Delta\psi_R\|_\infty\|u_m\|^2_{\frac{2N}{N-2\alpha}}\,dm\\
&\lesssim& R^{2\alpha-2}\int_0^M\,m^{\theta}\,m^{\alpha-2}\,dm\\
&\lesssim&R^{-\theta}\int_0^M\,m^{\frac{\theta}{2}-1}\,dm\\
&\lesssim& R^{-\theta}\, M^{\frac{\theta}{2}}.
\end{eqnarray*}
We conclude the proof by choosing $M=R^{-2}$.
\end{proof}
We also need the following estimates.
\begin{lem}
\label{1-R}
Let $u\in H^s(\R^N)$ with $s\in (\frac{1}{2},1)$ and $\theta\in (0,s)$. Then,
\begin{equation}
\label{e1}
\int_0^\infty\int_{\R^N}\,m^{\theta}\,|\Delta\psi_R|^2|u_m|^2\,dx\,dm\lesssim\frac{1}{R^2},
\end{equation}
\begin{equation}
\label{e2}
\|\psi_R\,u\|_{\dot{H}^{\theta}}\leq\, C(s,\psi)\,\|u\|_{H^s},\quad R\geq 1,
\end{equation}
and
\begin{equation}
\label{e3}
\int_0^\infty\int_{\R^N}\,m^{\theta}\,|\nabla\psi_R|^2|\nabla\,u_m|^2\,dx\,dm\lesssim\frac{1}{R^2}.
\end{equation}
\end{lem}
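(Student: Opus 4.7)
The plan is to handle the three estimates separately, taking advantage of a clean Fubini--Plancherel computation for \eqref{e3}, the Kato--Ponce product rule for \eqref{e2}, and a splitting argument in the $m$-integral for the more delicate \eqref{e1}. Throughout, the basic input is the Fourier representation $\widehat{u_m}(\xi) = c_s\,\hat u(\xi)/(m+|\xi|^2)$ together with the smoothing bound $\|\bD^{\alpha}u_m\|\lesssim m^{\alpha/2-1}\|u\|$ already proved in \eqref{um-alpha}.

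First I would dispose of \eqref{e3}. Pulling the $L^\infty$ bound $\|\nabla\psi_R\|_\infty \lesssim R^{-1}$ out of the spatial integral reduces the problem to estimating $\int_0^\infty m^\theta \|\nabla u_m\|^2\,dm$. Using Fubini and Plancherel, this integral equals
\[
c_s^2 \int_{\R^N} |\xi|^2\,|\hat u(\xi)|^2 \int_0^\infty \frac{m^\theta}{(m+|\xi|^2)^2}\,dm\,d\xi.
\]
The substitution $m = |\xi|^2 t$ evaluates the inner $m$-integral as a constant multiple of $|\xi|^{2\theta-2}$, the constant being finite for $\theta\in(0,1)$. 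Hence the whole quantity equals $C(\theta)\|\bD^{\theta}u\|^2 \lesssim \|u\|_{H^s}^2$ since $\theta<s$, and combining with the $R^{-2}$ factor gives \eqref{e3}.

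For \eqref{e2} I would invoke the fractional Leibniz rule \eqref{chain2} with exponents $p_1 = N/\theta$ and $q_1 = 2N/(N-2\theta)$:
\[
\|\bD^{\theta}(\psi_R u)\| \lesssim \|\bD^{\theta}\psi_R\|_{L^{N/\theta}}\,\|u\|_{L^{2N/(N-2\theta)}} + \|\psi_R\|_{L^\infty}\,\|\bD^{\theta} u\|.
\]
A direct scaling computation gives $\|\bD^{\theta}\psi_R\|_{L^{N/\theta}} = \|\bD^{\theta}\psi\|_{L^{N/\theta}}$, a constant independent of $R$. Since $\theta<s<N/2$, the Sobolev embedding \eqref{Sob-emb} controls $\|u\|_{L^{2N/(N-2\theta)}}$ by $\|u\|_{H^s}$, while $\|\psi_R\|_\infty = 1$ and $\|\bD^{\theta}u\| \leq \|u\|_{H^s}$, yielding \eqref{e2}.

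Finally, for \eqref{e1} the naive Fubini--Plancherel argument breaks down: it produces the integral $\int |\xi|^{2\theta-2}|\hat u|^2\,d\xi$, which $H^s$ cannot control near the zero frequency. I would therefore imitate the proof of Lemma~\ref{improv4.2} and split $\int_0^\infty = \int_0^M + \int_M^\infty$. On the high range $(M,\infty)$, the $L^\infty$ bound $\|\Delta\psi_R\|_\infty\lesssim R^{-2}$ combined with $\|u_m\|\lesssim m^{-1}\|u\|$ (the case $\alpha=0$ of \eqref{um-alpha}) yields a contribution of order $R^{-4}M^{\theta-1}$. On the low range $(0,M)$, I would pick an exponent $\alpha \in (1-\theta,\,\min(N/2,2))$ and apply H\"older with $\|\Delta\psi_R\|_{L^{N/\alpha}}\lesssim R^{\alpha-2}$, followed by the Sobolev embedding \eqref{Sob-s} and the smoothing bound \eqref{um-alpha}, obtaining a contribution of order $R^{2\alpha-4} M^{\theta+\alpha-1}$. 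The two contributions balance at $M = R^{-2}$, giving the total $R^{-2-2\theta}\lesssim R^{-2}$ in the regime $R\geq 1$ of interest. The hard part is precisely this interpolation: the lower bound $\alpha>1-\theta$ is needed so that the low-$m$ integral converges at the origin, while the upper bound $\alpha<2$ is needed for the smoothing bound \eqref{um-alpha} to apply, and one has to check that this range is non-empty for every admissible $\theta\in(0,s)$, which is guaranteed by $s<1$ and $N\geq 2$.
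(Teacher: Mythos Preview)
Your proof is correct. For \eqref{e1} and \eqref{e3} your arguments coincide with the paper's almost verbatim: the paper invokes \eqref{2-12} for \eqref{e3} (which is precisely your Fubini--Plancherel computation) and performs the same $\int_0^M + \int_M^\infty$ splitting for \eqref{e1}, choosing the specific value $\alpha = 1-\theta/2$ inside your admissible range and distributing the two factors of $|\Delta\psi_R|$ slightly differently in the H\"older step (one in $L^{N/(2\alpha)}$, one in $L^\infty$) while you place both in $L^{N/\alpha}$ --- either way the resulting bound is $R^{2\alpha-4}m^{\alpha-2}$, and the optimized output $R^{-2-2\theta}$ is identical.

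The one genuine difference is \eqref{e2}. The paper does not use the Kato--Ponce product rule; instead it simply observes that $\|\psi_R u\|_{\dot H^\theta}\leq \|\psi_R u\|_{H^s}$ and then invokes the Schwartz-multiplier bound \eqref{SHs}, checking that the constant $\|(1+|\cdot|^2)^{s/2}\widehat{\psi_R}\|_{L^1}$ stays bounded as $R\geq 1$. Your route via \eqref{chain2} with the scale-invariant choice $p_1=N/\theta$ is a clean alternative and in fact yields a constant that is uniform in \emph{all} $R>0$, not only $R\geq 1$; the paper's argument is shorter but intrinsically needs $R\geq 1$ since the Fourier weight $(1+|\eta/R|^2)^{s/2}$ blows up for small $R$.
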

\begin{proof}
 First, let us prove \eqref{e1}. For $M>0$ we write
 \begin{eqnarray*}
 \int_0^\infty\int_{\R^N}\,m^{\theta}\,|\Delta\psi_R|^2|u_m|^2\,dx\,dm&=&\int_0^M\int_{\R^N}\,m^{\theta}\,|\Delta\psi_R|^2|u_m|^2\,dx\,dm\\&+&\int_M^\infty\int_{\R^N}\,m^{\theta}\,|\Delta\psi_R|^2|u_m|^2\,dx\,dm.
 \end{eqnarray*}
 The second term in the RHS can be estimated as follows
 \begin{eqnarray*}
 \int_M^\infty\int_{\R^N}\,m^{\theta}\,|\Delta\psi_R|^2|u_m|^2\,dx\,dm&\leq&\int_M^\infty\,m^{\theta}\,\|\Delta\psi_R\|_\infty^2\|u_m\|^2\,dm\\
 &\lesssim&R^{-4}\,\int_M^\infty\,m^{\theta-2}\,dm\\
 &\lesssim& R^{-4}\,M^{\theta-1}.
 \end{eqnarray*}
 To estimate the first term in the RHS, we use H\"older's inequality together with \eqref{Sob-s} and \eqref{um-alpha} to obtain
 \begin{eqnarray*}
 \int_0^M\int_{\R^N}\,m^{\theta}\,|\Delta\psi_R|^2|u_m|^2\,dx\,dm&\leq&\int_0^M\,m^{\theta}\,\|\Delta\psi_R\|_{\frac{N}{2\alpha}}\|\Delta\psi_R\|_\infty\,\|u_m\|_{\frac{2N}{N-2\alpha}}^2\,dm\\
 &\lesssim& R^{2\alpha-4}\,\int_0^M\,m^{\theta}\,m^{\alpha-2}\,dm\\
 &\lesssim& R^{-\theta-2}\,M^{\theta/2},
 \end{eqnarray*}
 where $\alpha=1-\theta/2$. Choosing $M=R^{-2}$ yields \eqref{e1}.
 
 Next, we turn to \eqref{e2}. Since $\theta \in (0,s)$ then $\|\psi_R u\|_{\dot{H}^{\theta}}\leq\|\psi_R u\|_{H^s}$. Using \eqref{SHs}, we conclude the proof of \eqref{e2}.
 
 Finally, let us prove \eqref{e3}. By \eqref{2-12} and the fact that $\theta \in (0,s)$, we have 
 $$
 \int_0^\infty\int_{\R^N}\,m^{\theta}\,|\nabla\psi_R|^2|\nabla\,u_m|^2\,dx\,dm\lesssim \frac{1}{R^2}\,\|u\|_{\dot{H}^{\theta}}^2\lesssim \frac{1}{R^2}\,\|u\|_{{H}^{s}}^2.
 $$
\end{proof}
\begin{lem}
\label{E123}
Let $u\in H^s$ with $s\in(1/2,1)$. Then
\begin{equation}
\label{Ee1}
\bigg|\int_0^\infty\int_{\R^N}\,m^{s}\,\psi_R\Delta\psi_R|u_m|^2\,dx\,dm\bigg|\lesssim \frac{1}{R},
\end{equation}
\begin{equation}
\label{Ee2}
\bigg|\int_0^\infty\int_{\R^N}\,m^{s}\, \nabla\left(R_m[-\Delta, \psi_R]u_m]\right)\cdot\nabla(\psi_R \bar{u})_m\,dx\,dm \bigg|\lesssim \frac{1}{R},
\end{equation}
and
\begin{equation}
\label{Ee3}
\bigg|\int_0^\infty\int_{\R^N}\,m^{s}\, \nabla(\psi_R\,u_m)\cdot\nabla\left(R_m[-\Delta, \psi_R]\bar{u}_m]\right)\,dx\,dm \bigg|\lesssim \frac{1}{R}.
\end{equation}
\end{lem}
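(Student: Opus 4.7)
The plan is to treat each bound separately. The estimate \eqref{Ee1} is an immediate consequence of Lemma \ref{improv4.2}: choosing $\theta=s\in(1/2,1)$ yields the bound $R^{-2s}$, which for $R\ge 1$ and $s>1/2$ is dominated by $R^{-1}$.

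For \eqref{Ee2} (the proof of \eqref{Ee3} being identical up to complex conjugation), the starting point is the algebraic identity
\[
R_m[-\Delta,\psi_R]u_m \;=\; \psi_R\,u_m-(\psi_R u)_m,
\]
which is obtained by expanding $[-\Delta,\psi_R]u_m=-\Delta(\psi_R u_m)+\psi_R\Delta u_m$, applying $R_m$, and using the resolvent identities $R_m(-\Delta)=I-mR_m$ and $\Delta u_m=mu_m-c_s u$. Substituting into the integrand of \eqref{Ee2}, and using that $\psi_R$ is real so $(\psi_R\bar u)_m=\overline{(\psi_R u)_m}$, the integrand rewrites as
\[
\nabla(\psi_R u_m)\cdot\overline{\nabla(\psi_R u)_m}\;-\;|\nabla(\psi_R u)_m|^2.
\]
After multiplication by $m^s$ and integration in $(m,x)$, the second piece equals $-s\|\bD^s(\psi_R u)\|^2$ by \eqref{2-12}. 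The strategy is then to show that the first piece equals $+s\|\bD^s(\psi_R u)\|^2$ up to an $O(R^{-1})$ error. To this end I would expand $\nabla(\psi_R u_m)=\psi_R\nabla u_m+(\nabla\psi_R)u_m$, integrate by parts in $x$ to shift the gradient off $u_m$, and use $\Delta(\psi_R u)_m=m(\psi_R u)_m-c_s\psi_R u$ to re-express the second-order term that appears. The ``principal'' contribution, coming from the $\psi_R^2$-pairing, cancels $-|\nabla(\psi_R u)_m|^2$ via \eqref{phivm1} and \eqref{2-12}, while the remaining boundary pieces are integrals containing $\nabla\psi_R$ or $\Delta\psi_R$.

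The main obstacle is to bound these boundary terms by $R^{-1}$, since a direct Cauchy--Schwarz applied to the product $\nabla R_m[-\Delta,\psi_R]u_m\cdot\nabla(\psi_R\bar u)_m$ only yields $R^{-s}$, which is insufficient when $s<1$. The resolution mirrors the proof of Lemma \ref{improv4.2}: split the $m$-integral at $M=R^{-2}$; for $m\le M$ use H\"older together with the resolvent smoothing $\|\bD^{\alpha}u_m\|_{L^{2}}\lesssim m^{\alpha/2-1}\|u\|$ from \eqref{um-alpha} and the $L^p$-scaling $\|\nabla^k\psi_R\|_{L^{N/(k\alpha)}}\lesssim R^{k(\alpha-1)}$; for $m\ge M$ use the operator bounds $\|R_m\|_{L^2\to L^2}\le 1/m$ and $\|\nabla R_m\|_{L^2\to L^2}\le 1/(2\sqrt m)$ combined with $\|\nabla\psi_R\|_\infty\lesssim R^{-1}$ and $\|\Delta\psi_R\|_\infty\lesssim R^{-2}$. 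Optimizing at $M=R^{-2}$ produces the $R^{-1}$ decay on each boundary term. The estimates of Lemma \ref{1-R} together with the already-established \eqref{Ee1} dispose of the remaining error terms, yielding \eqref{Ee2}; the case \eqref{Ee3} follows by complex conjugation.
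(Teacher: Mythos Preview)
Your treatment of \eqref{Ee1} is correct and matches the paper.

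There is, however, a genuine gap in your reduction of \eqref{Ee3} to \eqref{Ee2}. Complex conjugation of the integrand in \eqref{Ee2} produces
\[
\nabla\big(R_m[-\Delta,\psi_R]\bar u_m\big)\cdot\nabla(\psi_R u)_m,
\]
whereas the integrand in \eqref{Ee3} contains $\nabla(\psi_R u_m)$, not $\nabla(\psi_R u)_m$. Since $(\psi_R u)_m$ and $\psi_R u_m$ differ exactly by the commutator term $R_m[-\Delta,\psi_R]u_m$ (this is precisely your algebraic identity), the two integrals are \emph{not} conjugates of one another; \eqref{Ee3} requires its own argument.

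Your diagnosis that a naive Cauchy--Schwarz yields only $R^{-s}$ is also slightly off the mark. The paper does use Cauchy--Schwarz for \eqref{Ee3}, but with an \emph{asymmetric} splitting $m^s=m^{s\beta}\cdot m^{s(1-\beta)}$ for $\beta\in(\tfrac{1}{2s},1)$: after applying $\|\nabla R_m w\|\lesssim m^{-1/2}\|w\|$, one factor becomes
\[
\Big(\int_0^\infty m^{2s\beta-1}\big\|(\Delta\psi_R)u_m+2\nabla\psi_R\cdot\nabla u_m\big\|^2\,dm\Big)^{1/2}\lesssim \frac{1}{R}
\]
via \eqref{e1} and \eqref{e3} with $\theta=2s\beta-1\in(0,s)$, while the other factor $\|m^{s(1-\beta)}\nabla(\psi_R u_m)\|_{L^2_{m,x}}$ stays bounded by \eqref{phivm1} and Lemma~\ref{improv4.2} with $\theta=2s(1-\beta)\in(0,1)$. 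This weighted splitting is what upgrades $R^{-s}$ to $R^{-1}$ and avoids the more involved IBP/expansion route you sketch. Your $M$-splitting idea is in the spirit of Lemma~\ref{improv4.2} and could in principle be pushed through for the boundary pieces, but as written it does not cover \eqref{Ee3}, and the ``principal'' cancellation you invoke is essentially equivalent to the conclusion of Lemma~\ref{locfrac}, which in the paper's logic is a \emph{consequence} of Lemma~\ref{E123} rather than an input to it.
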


\begin{proof}
 The estimate \eqref{Ee1} follows easily from \eqref{R-theta} and $2s>1$.
 
 The proof of \eqref{Ee2} uses similar arguments as in \cite[Lemma 4.3]{syz} together with \eqref{e1}-\eqref{e2}-\eqref{e3}.
 
 To prove \eqref{Ee3} let us first define
 $$
 I(R):=\bigg|\int_0^\infty\int_{\R^N}\,m^{s}\, \nabla(\psi_R\,u_m)\cdot\nabla\left(R_m[-\Delta, \psi_R]\bar{u}_m\right)\,dx\,dm \bigg|.
 $$
Note that
$$
[-\Delta, \psi_R]\bar{u}_m=-(\Delta\psi_R)\bar{u}_m-2\nabla\psi_R\cdot\nabla\bar{u}_m,
$$
and
$$
\|\nabla(R_m w)\|\lesssim\,m^{-1/2}\,\|w\|.
$$
Taking $\beta\in(\frac1{2s},1)$ and noticing that $2s(1-\beta)\in (0,1)$, $2s\beta-1\in (0,s)$, one writes thanks to Cauchy-Schwarz's inequality
\begin{eqnarray*}
I(R)&\lesssim&\|m^{s\beta-\frac12}(\Delta(\psi_R)u_m+2\nabla\psi_R\cdot\nabla u_m)\|_{L^2(\R_+,L^2(\R^N))}\,\|m^{s(1-\beta)}\nabla(\psi_Ru_m)\|_{L^2(\R_+,L^2(\R^N))}\\
&\lesssim&\frac{1}{R}\,\|m^{s(1-\beta)}\nabla(\psi_Ru_m)\|_{L^2(\R_+,L^2(\R^N))},
\end{eqnarray*}
where we have used \eqref{e1} and \eqref{e2}.
Now, applying Lemma \ref{improv4.2}, one has
\begin{eqnarray*}
\|m^{s(1-\beta)}\nabla(\psi_Ru_m)\|_{L^2(\R_+,L^2(\R^N))}^2
&=&\int_0^\infty m^{2s(1-\beta)}\int_{\R^N}|\nabla(\psi_Ru_m)|^2\,dx\,dm\\
&=&\int_0^\infty m^{2s(1-\beta)}\int_{\R^N}(\psi_R^2|\nabla u_m|^2-\psi_R\Delta\psi_R|u_m|^2)\,dx\,dm\\
&\lesssim&\|u\|^2_{H^{s(1-\beta)}}+\frac1{R^{4s(1-\beta)}}\\
&\lesssim&\|u\|^2_{H^s}+1.
\end{eqnarray*}
This gives \eqref{Ee3} as desired.
\end{proof}
As a consequence, we obtain
\begin{lem}
\label{locfrac}
Let $u\in H^s(\R^N)$. Then, as $R\to\infty$, we have
\begin{eqnarray}
\nonumber
s\|\bD^{s}(\psi_R\,u)\|^2
&\leq&\int_0^\infty\,\int_{\R^N}\,m^s\, \psi^2_R\,|\nabla\,u_m|^2\,dx\, dm+\text{O}\left(\frac1R\right)\\
\label{psiRu}
&\leq&s\|\bD^{s}u\|^2+\text{O}\left(\frac1R\right).
\end{eqnarray}
\end{lem}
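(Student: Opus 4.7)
The plan is to assemble this directly from the identities and estimates already established in Lemmas \ref{IBP}, \ref{improv4.2}, \ref{1-R}, and \ref{E123}. The statement decomposes into two inequalities; the first is the delicate one because it measures the cost of localizing a fractional-Sobolev norm, while the second is essentially a pointwise domination by the cutoff.

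First, I would apply the identity \eqref{loc-frac-1} with $\varphi=\psi_R$ and $v=u$, which yields
\begin{equation*}
s\|\bD^{s}(\psi_R u)\|^{2} = A_R + B_R + C_R,
\end{equation*}
where $A_R:=\int_{0}^{\infty}\!\int_{\R^{N}}m^{s}\bigl(|\nabla(\psi_R u)_m|^{2}-|\nabla(\psi_R u_m)|^{2}\bigr)\,dx\,dm$, $B_R:=\int_{0}^{\infty}\!\int_{\R^{N}}m^{s}\psi_R^{2}|\nabla u_m|^{2}\,dx\,dm$, and $C_R:=-\int_{0}^{\infty}\!\int_{\R^{N}}m^{s}\psi_R\Delta\psi_R|u_m|^{2}\,dx\,dm$. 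The target is to show $A_R+C_R=\mathrm{O}(1/R)$, so that $s\|\bD^{s}(\psi_R u)\|^{2}=B_R+\mathrm{O}(1/R)$.

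For the term $C_R$, I would invoke \eqref{Ee1} of Lemma \ref{E123} directly; this uses $\theta=s>1/2$ in \eqref{R-theta} and delivers $|C_R|\lesssim 1/R$. For $A_R$, I would substitute the pointwise expansion \eqref{phivm} from Lemma \ref{IBP}, which writes $|\nabla(\psi_R u)_m|^{2}-|\nabla(\psi_R u_m)|^{2}$ as exactly the two commutator cross terms whose $m^{s}$-weighted space-time integrals are controlled precisely by \eqref{Ee2} and \eqref{Ee3}. Combining these three estimates yields the first inequality of \eqref{psiRu}.

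The second inequality is immediate: since $0\le\psi_R\le 1$ we have $\psi_R^{2}\le 1$ pointwise, so $B_R\le\int_{0}^{\infty}\!\int_{\R^{N}}m^{s}|\nabla u_m|^{2}\,dx\,dm$, and this last integral equals $s\|\bD^{s}u\|^{2}$ by Balakrishnan's formula in the form \eqref{2-12}. There is no real obstacle left here; the hard analytic work was done in proving \eqref{Ee1}--\eqref{Ee3} of Lemma \ref{E123} (where the condition $s>1/2$ is crucial so that one can split the $m$-integral at $M=R^{-2}$ and balance the Sobolev-type estimate \eqref{um-alpha} against the trivial bound). The present lemma is essentially the packaging of those three ingredients plus \eqref{loc-frac-1} into the clean remainder form needed later in the Morawetz analysis.
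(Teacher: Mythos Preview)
Your proof is correct and follows essentially the same approach as the paper: you apply the identity \eqref{loc-frac-1} with $\varphi=\psi_R$, use \eqref{phivm} to express $A_R$ as the two commutator cross terms, and then invoke \eqref{Ee1}--\eqref{Ee3} to bound the error contributions by $\mathrm{O}(1/R)$, finishing with $\psi_R^2\le 1$ and \eqref{2-12} for the second inequality. The paper's proof is identical in structure, merely writing out the four-term decomposition explicitly rather than grouping into your $A_R+B_R+C_R$.
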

\begin{proof}
Using \eqref{loc-frac-1} and \eqref{nablaphivm}, we have
\begin{eqnarray*}
s\|\bD^{s}(\psi_Ru)\|^2
&=&\int_0^\infty\int_{\R^N} m^s\,\psi_R^2\,|\nabla u_m|^2\,dx\,dm-\int_0^\infty\int_{\R^N}\,m^s\,\psi_R\Delta\psi_R|u_m|^2\,dx\,dm\\
&-&\int_0^\infty\int_{\R^N}\, m^s\,\nabla(\psi_R\,u_m)\cdot\nabla(R_m[-\Delta,\psi_R]\bar{u}_m)\,dx\,dm\\
&-&\int_0^\infty\int_{\R^N}\, m^s\,\nabla(R_m[-\Delta,\psi_R]u_m)\cdot\nabla(\psi_R\bar{u})_m\,dx\,dm.
\end{eqnarray*}
It follows from the estimates \eqref{Ee1}-\eqref{Ee2}-\eqref{Ee3}  that
\begin{eqnarray*}
s\|\bD^{s}(\psi_R\,u)\|^2
&\leq&\int_0^\infty\,\int_{\R^N}\,m^s\, \psi^2_R\,|\nabla\,u_m|^2\,dx\, dm+\text{O}\left(\frac1R\right)\\
&\leq&s\|\bD^{s}u\|^2+\text{O}\left(\frac1R\right),\;\;\;\text{as}\;\;\; R\to\infty.
\end{eqnarray*}
 This finishes the proof of Lemma \ref{locfrac}.
 \end{proof}

%%%%%%%%%%%%%%%%%%%%%%%%%%%%%%%%%%%%%%%%%%%%%%%%%%%%%%%%%%%%%%%%%%%%%%%%%%%%%%%%%%%%%%%%%%%%%%%%%%%%%%%%%%%%%%%%%%%%%%%%%%%%

%%%%%%%%%%%%%%%%%%%%%%%%%%%%%%%%%%%%%%%%%%%%%%%%%%%%%%%%%%%%%%%%%%%%%%%%%%%%%%%%%%%%%%%%%%%%%%%%%%%%%%%%%%%%%%%%%%%%%%%%%%%%%%%%%%%%%%%%%%%%%%%%%%%%%%%%%%%%%%%%%%%%%

%%%%%%%%%%%%%%%%%%%%%%%%%%%%%%%%%%%%%%%%%%%%%%%%%%%%%%%%%%%%%%%%%%%%%%%%%%%%%%%%%%%%%%%%%%%%%%%%%%%%%%%%%%%%%%%%%%%%%%%%%%%%%%%%%%%%%%%%%%%%%%%%%%%%%%%%%%%%%%%%%%%%%
\section{Variational analysis}
%%%%%%%%%%%%%%%%%%%%%%%%%%%%%%%%%%%%%%%%%%%%%%%%%%%%%%%%%%%%%%%%%%%%%%%%%%%%%%%%%%%%%%%%%%%%%%%%%%%%%%
\label{S3}

Recall that $Q$ stands for the unique nonnegative radially symmetric decreasing solution to \eqref{E}. The following inequality will be useful in obtaining a coercivity result.
\begin{lem}
\label{Coe}
Let $u\in H^s$. Then
\begin{equation}
\label{Coer0}
P[u]
\leq\frac{p+1}{B}\bigg(\frac{M[u]^{\gamma_c}P[u]}{M[Q]^{\gamma_c}P[Q]}\bigg)^{\frac{B-2}{B}}\;\|\bD^{s} u\|^2,
\end{equation}
where $B$ is given by \eqref{AB}.
\end{lem}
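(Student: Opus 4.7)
The plan is to derive \eqref{Coer0} directly from the sharp Gagliardo--Nirenberg inequality \eqref{Nirenberg} by interpolating the power of $\|\bD^s u\|$ appearing on its right-hand side. More precisely, I would start from
\[
P[u]\leq K_{opt}\,\|u\|^{A}\,\|\bD^{s} u\|^{B},
\]
raise both sides to the power $2/B$, and then multiply back by $P[u]^{(B-2)/B}=P[u]\cdot P[u]^{-2/B}$. This yields the schematic bound
\[
P[u]\leq K_{opt}^{2/B}\,\|u\|^{2A/B}\,P[u]^{(B-2)/B}\,\|\bD^{s}u\|^{2},
\]
so it remains to identify the prefactor $K_{opt}^{2/B}\,\|u\|^{2A/B}$ with the expression on the right of \eqref{Coer0}.

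The second step is a bookkeeping of exponents. Using the explicit values $B=\frac{1}{2s}(N(p-1)+2b)$, $s_c=\frac{N}{2}-\frac{2s-b}{p-1}$, and $\gamma_c=\frac{s-s_c}{s_c}$, I would verify the key identity
\[
A=\gamma_c(B-2),
\]
which follows from computing $B-2=\frac{(p-1)s_c}{s}$ and comparing with $A=p+1-B$. This identity converts the $\|u\|^{2A/B}$ factor into $M[u]^{\gamma_c(B-2)/B}=\bigl(M[u]^{\gamma_c}\bigr)^{(B-2)/B}$, producing the desired power of $M[u]^{\gamma_c}$.

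The third step is to match the constant. Using the formulas \eqref{K-opt} and \eqref{K-opt-1}, together with the relation $A+B=p+1$, I would compute
\[
K_{opt}^{2/B}=(p+1)^{2/B}A^{(B-2)/B}B^{-1}\|Q\|^{2(1-p)/B},
\]
and independently
\[
\frac{p+1}{B}\bigl(M[Q]^{\gamma_c}\,P[Q]\bigr)^{-(B-2)/B}
=\frac{p+1}{B}\,\Bigl(\tfrac{p+1}{A}\Bigr)^{-(B-2)/B}\|Q\|^{-2(A+B-2)/B},
\]
and since $A+B-2=p-1$ these two expressions coincide. Plugging this identification back gives \eqref{Coer0} exactly.

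Conceptually there is no obstacle; the whole proof is an algebraic reorganisation of the sharp Gagliardo--Nirenberg inequality. The only step that demands care is the verification of $A=\gamma_c(B-2)$ and the matching of the constant, since a wrong power of $\|Q\|$ or a missing factor of $A/B$ would destroy the final equality; I would therefore present those two computations explicitly and leave the interpolation step to a single line.
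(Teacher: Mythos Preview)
Your proposal is correct and follows essentially the same route as the paper's proof: both start from the sharp Gagliardo--Nirenberg inequality \eqref{Nirenberg}, invoke the exponent identity $A=\gamma_c(B-2)$ (equivalently $(p-1)s_c=s(B-2)$), and match the constant via \eqref{K-opt} and the Pohozaev identities \eqref{K-opt-1}. The only cosmetic difference is that the paper raises both sides to the power $B/2$ and works with $[P[u]]^{B/2}$, whereas you raise to the power $2/B$ and multiply back by $P[u]^{(B-2)/B}$; the algebra is identical.
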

\begin{proof}
Thanks to Pohozaev identities \eqref{K-opt-1} (see also \cite[Theorem 2.2]{pz}), one has
$$P[Q]=\frac{p+1}{A}\,M[Q]=\frac{p+1}{B}\,\|\bD^{s}Q\|^2.$$
Using the Gagliardo-Nirenberg inequality \eqref{Nirenberg}, the expression of $K_{opt}$ given by \eqref{K-opt} and the identities $(p-1)s_c=s(B-2)$ and $\gamma_c(B-2)=A$, one writes
\begin{eqnarray*}
[P[u]]^\frac{B}2
&\leq& K_{opt}\left(\|u\|^{2\gamma_c}P[u]\right)^{\frac B2-1}\,\|\bD^{s} u\|^B\\
&\leq& \frac{p+1}{A}\left(\frac AB\right)^{\frac{B}2}\|Q\|^{-(p-1)}\left(M[u]^{\gamma_c}P[u]\right)^{\frac B2-1}\|\bD^{s} u\|^B\\
&\leq& \frac{1+p}A\left(\frac AB\right)^{\frac{B}2}M[Q]^{\frac{A-(p-1)}2}[P[Q]]^{\frac B2-1}\bigg(\frac{M[u]^{\gamma_c}P[u]}{M[Q]^{\gamma_c}P[Q]}\bigg)^{\frac B2-1}\|\bD^{s} u\|^B\\
&\leq&\bigg(\frac AB\frac{P[Q]}{M[Q]}\bigg)^{\frac{B}2}\,\bigg(\frac{M[u]^{\gamma_c}P[u]}{M[Q]^{\gamma_c}P[Q]}\bigg)^{\frac B2-1}\,\|\bD^{2s} u\|^B\\
&\leq&\bigg(\frac{M[u]^{\gamma_c}P[u]}{M[Q]^{\gamma_c}P[Q]}\bigg)^{\frac B2-1}\,\bigg(\frac{1+p}B\|\bD^{s} u\|^2\bigg)^{\frac{B}2}.
\end{eqnarray*}
This leads to \eqref{Coer0} as desired.
\end{proof}
As a consequence of the above lemma, we obtain the following coercivity result.
\begin{cor}\label{bnd}
Let $u\in H^s$ and $\varepsilon\in (0,1)$ satisfying
\begin{equation}\label{1}
P[u][M[u]]^{\gamma_c}\leq(1-\varepsilon)P[Q][M[Q]]^{\gamma_c}.
\end{equation}
Then, 
\begin{equation}
\label{Coer1}
P[u]\leq \frac{p+1}{B}\;(1-\varepsilon)^{\frac{B-2}{B}}\;\|\bD^{s}u\|^2\leq \frac{p+1}{B}\;\|\bD^{s} u\|^2,
\end{equation}
and 
\begin{equation}
\label{Coer2}
\|\bD^{s} u\|^2-\frac{B}{p+1}P[u]\geq c(\varepsilon, B)\;\|\bD^{s} u\|^2,
\end{equation}
where $c(\varepsilon ,B):= 1-(1-\varepsilon)^{\frac{B-2}{B}}>0$.
Moreover, for $\varepsilon$ small enough, we have
\begin{equation}
\label{Coer3}
E[u]\geq \frac{B-2}{B}\;\|\bD^{s} u\|^2.
\end{equation}
\end{cor}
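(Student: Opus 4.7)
\medskip

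\textbf{Proof plan for Corollary \ref{bnd}.} The plan is to extract all three conclusions directly from Lemma \ref{Coe} by exploiting the inter-critical assumption $p>p_*$, which translates into $B>2$. Indeed, $p>p_*=1+\tfrac{2(2s-b)}{N}$ forces $N(p-1)>2(2s-b)$, hence $N(p-1)+2b>4s$, and from the definition $B=\tfrac{1}{2s}(N(p-1)+2b)$ in \eqref{AB} we conclude $B>2$. In particular $\tfrac{B-2}{B}\in(0,1)$, so the map $t\mapsto t^{(B-2)/B}$ is strictly increasing on $[0,\infty)$ and bounded above by $1$ on $[0,1]$; this simple monotonicity is the only analytic ingredient beyond Lemma \ref{Coe}.

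For \eqref{Coer1}, I would simply plug the sub-threshold hypothesis \eqref{1} into the conclusion \eqref{Coer0} of Lemma \ref{Coe}. Since the factor $\bigl(\tfrac{M[u]^{\gamma_c}P[u]}{M[Q]^{\gamma_c}P[Q]}\bigr)^{(B-2)/B}$ is bounded by $(1-\varepsilon)^{(B-2)/B}$, this yields the first inequality in \eqref{Coer1}. The second inequality is immediate from $(1-\varepsilon)^{(B-2)/B}<1$.

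For \eqref{Coer2}, I would rearrange the first estimate in \eqref{Coer1} as
\begin{equation*}
\frac{B}{p+1}\,P[u]\leq (1-\varepsilon)^{(B-2)/B}\,\|\bD^{s}u\|^2,
\end{equation*}
and subtract from $\|\bD^{s}u\|^2$ to obtain
\begin{equation*}
\|\bD^{s}u\|^2-\frac{B}{p+1}\,P[u]\geq \bigl(1-(1-\varepsilon)^{(B-2)/B}\bigr)\|\bD^{s}u\|^2=c(\varepsilon,B)\,\|\bD^{s}u\|^2,
\end{equation*}
with $c(\varepsilon,B)>0$ precisely because $B>2$ and $\varepsilon\in(0,1)$.

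Finally, for \eqref{Coer3}, I would start from the energy identity $E[u]=\|\bD^{s}u\|^2-\tfrac{2}{p+1}P[u]$ read off from \eqref{Energy} and combine it with the weaker form of \eqref{Coer1}, namely $P[u]\leq \tfrac{p+1}{B}\|\bD^{s}u\|^2$, which holds for every $\varepsilon\in(0,1)$. Substituting gives
\begin{equation*}
E[u]\geq \|\bD^{s}u\|^2-\frac{2}{B}\,\|\bD^{s}u\|^2=\frac{B-2}{B}\,\|\bD^{s}u\|^2,
\end{equation*}
with no further smallness needed on $\varepsilon$. Since there is no genuine analytic difficulty, the only point that deserves care is the verification $B>2$ from $p>p_*$; once this is secured, each assertion is an algebraic consequence of Lemma \ref{Coe} and the energy identity.
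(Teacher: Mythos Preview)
Your proposal is correct and follows essentially the same route as the paper: both derive \eqref{Coer1} by inserting \eqref{1} into \eqref{Coer0}, and \eqref{Coer2} by rearranging. For \eqref{Coer3} the paper uses the sharper first inequality in \eqref{Coer1} and argues that the coefficient $1-\tfrac{2}{B}(1-\varepsilon)^{(B-2)/B}$ tends to $\tfrac{B-2}{B}$ as $\varepsilon\to0$, whereas you use the weaker second inequality directly; your version is slightly cleaner and, as you note, shows that no smallness on $\varepsilon$ is actually required for \eqref{Coer3}.
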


\begin{proof}
Inequality \eqref{Coer1} follows immediately from \eqref{Coer0} and \eqref{1}.
To prove \eqref{Coer2} we use the first inequality in \eqref{Coer1} and the fact that $B>2$ and $\varepsilon\in (0,1)$.
Finally, using the first inequality in \eqref{Coer1}, we infer
\begin{eqnarray*}
E[u]
&=&\|\bD^{s} u\|^2-\frac2{1+p}P[u]\\
&\geq&\bigg(1-\frac{2}B\Big(1-\varepsilon\Big)^\frac{B-2}B\bigg)\|\bD^{s} u\|^2 .
\end{eqnarray*}
Since $1-\frac{2}B\Big(1-\varepsilon\Big)^\frac{B-2}B\to 1-\frac{2}{B}$ as $\varepsilon\to 0$ and $B>2$, we get \eqref{Coer3}.
\end{proof}
%%%%%%%%%%%%%%%%%%%%%%%%%%%%%%%%%%%%%%%%%%%%%%%%%%%%%%%%%%
\begin{rem}
{\rm Since $$P[\psi_R\,u]\leq P[u]\quad\mbox{and}\quad M[\psi_R\,u]\leq M[u], \;\;\; \forall\;\;\;R>0,$$
inequalities \eqref{Coer1}-\eqref{Coer2} remain true for $\psi_R\,u$ instead of $u$. Namely, we have
\begin{equation}
\label{Coer1-R}
P[\psi_R\,u]\leq \frac{p+1}{B}\;\|\bD^{s}(\psi_R\,u)\|^2,
\end{equation}
and
\begin{equation}
\label{Coer2-R}
\|\bD^{s}(\psi_R\,u)\|^2-\frac{B}{p+1}P[\psi_R\,u]\geq c(\varepsilon, B)\;\|\bD^{s} (\psi_R\,u)\|^2.
\end{equation}}
\end{rem}
\begin{rem}{\rm The solution is global by \eqref{Coer3}.}
\end{rem}
%%%%%%%%%%%%%%%%%%%%%%%%%%%%%%%%%%%%%%%%%%%%%%%%%%%%%%%%%%%%%%%%%%%%%%%%%%%%%%%%%%%%%%%%%%%%%%%%%%%%%%%%%%
\section{Morawetz estimates}
%%%%%%%%%%%%%%%%%%%%%%%%%%%%%%%%%%%%%%%%%%%%%%%%%%%%%%%%%%%%%%%%%%%%%%%%%%%%%%%%%%%%%%%%%%%%%%%%%%%%%%
\label{S4}
In this section, we assume that $N\geq 2$, $s\in (\frac{N}{2N-1},1)$, $b\in (0, 2s)$, $p\in (p_*, p^*)$ and $u\in C([0,\infty); H^s(\R^N))$ is a global solution of \eqref{S}. \\

Consider a smooth real-valued function $f$ such that
\begin{equation}
\label{f}
0\leq f''\leq1\quad\text{and}\quad
f(r)=\left\{
\begin{array}{ll}
\frac{r^2}2,\quad\mbox{if}\quad 0\leq r\leq1;\\
1,\quad\mbox{if}\quad  r\geq2.
\end{array}
\right.
\end{equation}

As a consequence, we see that $f'(r)\leq r$.
Define, for $R>0$, the smooth radial function on $\R^N$ by
\begin{equation}
\label{f-R}
f_R(x):=R^2f\left(\frac{|x|}{R}\right).
\end{equation} 
We easily deduce from \eqref{f} and \eqref{f-R} the following properties
\begin{equation}
\label{ff-R}
0\leq f_R''\leq1,\quad f_R'(r)\leq r,\quad N-\Delta f_R\geq 0.
\end{equation}

Denote the localized virial of $u(t,x)$ by
\begin{equation}
\label{M-R}
M_{R}[u(t)]:=2\Im\int_{\R^N}\bar {u}(t)\nabla{f_R}\cdot\nabla u(t)\,dx := 2\Im\int_{\R^N}\bar u\partial_k{f_R}\partial_k u\,dx. 
\end{equation}
The next lemma gives the evolution of $M_{R}[u(t)]$.
\begin{lem}
\label{Evo-M-R}
We have
\begin{eqnarray}
\nonumber
\frac d{dt} M_{R} [u(t)]
&=&4\int_0^\infty m^s\int_{|x|<R}|\nabla u_m|^2\,dx\,dm+4\int_0^\infty m^s\int_{R<|x|<2R}{f''}\left(\frac{|x|}{R}\right)|\nabla u_m|^2\,dx\,dm\\
\label{E-M-R}
&-&\int_0^\infty m^s\int_{\R^N}\Delta^2{f_R}|u_m|^2\,dx\,dm-\frac{4sB}{1+p}\int_{|x|<R}|x|^{-b}|u|^{p+1}\,dx\\
\nonumber
&+&\frac{2(p-1)}{1+p}\int_{|x|>R}(N-\Delta{f}_R)|x|^{-b}|u|^{p+1}\,dx-\frac{4b}{1+p}\int_{|x|>R}\frac{x\cdot\nabla{f}_R}{|x|^2}|x|^{-b}|u|^{1+p}\,dx,
\end{eqnarray}
where $u_m$ is given by \eqref{u-m}.
\end{lem}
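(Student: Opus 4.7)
My plan is to differentiate $M_R[u(t)]$ in $t$, substitute the equation $i\partial_t u=\bD^{2s}u-|x|^{-b}|u|^{p-1}u$, and organise the result into a \emph{kinetic} piece (arising from $\bD^{2s}u$) plus a \emph{potential} piece (from the nonlinearity). After one integration by parts in $x$ to shift the gradient off $\partial_t u$, and the standard conjugation identity $\Im(\partial_t\bar u\,A)=-\Im(\bar A\,\partial_t u)$ applied to $A=\nabla f_R\cdot\nabla u$, I obtain
\begin{align*}
\tfrac{d}{dt}M_R &= 4\operatorname{Re}\!\int\bD^{2s}\bar u\,\nabla f_R\!\cdot\!\nabla u\,dx + 2\!\int\Delta f_R\operatorname{Re}(\bar u\,\bD^{2s}u)\,dx \\
&\quad -4\operatorname{Re}\!\int|x|^{-b}|u|^{p-1}\bar u\,\nabla f_R\!\cdot\!\nabla u\,dx - 2\!\int\Delta f_R\,|x|^{-b}|u|^{p+1}\,dx.
\end{align*}

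For the kinetic part I would invoke Balakrishnan's formula \eqref{Bala} in the form $\bD^{2s}u=-c_s\int_0^\infty m^{s-1}\Delta u_m\,dm$, together with the dual relation $u=c_s^{-1}(m-\Delta)u_m$, and exchange the $m$- and $x$-integrations. At fixed $m$, elementary integration-by-parts identities such as $\operatorname{Re}(\partial_j\bar v\,\partial_j\partial_k v)=\tfrac12\partial_k|\nabla v|^2$, $\operatorname{Re}(\bar v\Delta v)=\tfrac12\Delta|v|^2-|\nabla v|^2$, applied to $v=u_m$, generate four types of local terms in each kinetic summand: a Hessian term $\partial_j\partial_k f_R\operatorname{Re}(\partial_j\bar u_m\partial_k u_m)$, a bilaplacian term $\Delta^2 f_R|u_m|^2$, and two parasite terms $m^s\Delta f_R|\nabla u_m|^2$ and $m^{s-1}\Delta f_R|\Delta u_m|^2$. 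The parasites appear with opposite signs in the two summands and cancel, leaving
$$K=4\!\int_0^\infty\! m^s\!\!\int\partial_j\partial_k f_R\operatorname{Re}(\partial_j\bar u_m\partial_k u_m)\,dx\,dm-\!\int_0^\infty\! m^s\!\!\int\Delta^2 f_R\,|u_m|^2\,dx\,dm.$$
Since $u$ is radial, so is each $u_m$, which reduces the Hessian contraction to $f''(|x|/R)|\nabla u_m|^2$. Splitting by the three regions $|x|\le R$, $R<|x|<2R$, $|x|\ge 2R$, where $f''$ equals $1$, $f''(|x|/R)$, and $0$ respectively, gives exactly the first three terms of \eqref{E-M-R}.

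For the potential part, the identity $|u|^{p-1}\operatorname{Re}(\bar u\partial_k u)=\tfrac{1}{p+1}\partial_k|u|^{p+1}$ turns the cubic integral into a divergence, and integrating by parts (no boundary contributions, since $\nabla f_R\equiv 0$ for $|x|\ge 2R$) gives $-\tfrac{4b}{p+1}\int|x|^{-b}\tfrac{x\cdot\nabla f_R}{|x|^2}|u|^{p+1}\,dx+\tfrac{4}{p+1}\int|x|^{-b}\Delta f_R|u|^{p+1}\,dx$. Combining with $-2\int\Delta f_R|x|^{-b}|u|^{p+1}\,dx$, splitting the integrals at $|x|=R$ using $\nabla f_R=x$ and $\Delta f_R=N$ on $|x|<R$, and invoking the algebraic identity $2sB=N(p-1)+2b$, the coefficient $-\tfrac{4sB}{p+1}$ emerges in the inner ball and the outer pieces assemble into the announced form. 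The most delicate step is the kinetic one: the interchange of $\int_0^\infty m^s\,dm$ with spatial integration and the repeated integrations by parts involving $\Delta u_m$ and $\nabla\Delta u_m$ must be controlled via the quantitative estimates $\|\bD^\alpha u_m\|\lesssim m^{\alpha/2-1}\|u\|$ for $0\le\alpha<2$ (as in Lemma \ref{improv4.2}), which ensure convergence of all $m$-integrals at both $m\to 0^+$ and $m\to\infty$; once these are in hand, the rest is bookkeeping.
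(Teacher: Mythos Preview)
Your proposal is correct and follows essentially the same route as the paper. The paper packages the computation via the commutator identity
\[
\tfrac{d}{dt}M_R[u(t)]=\langle u,[\bD^{2s},i\mathbf{\Gamma}_R]u\rangle+\langle u,[-|x|^{-b}|u|^{p-1},i\mathbf{\Gamma}_R]u\rangle=:\mathbf{A}(t)+\mathbf{B}(t),
\]
where $\mathbf{\Gamma}_R\phi=-i(\operatorname{div}(\phi\nabla f_R)+\nabla f_R\cdot\nabla\phi)$, and then cites \cite{bhl} for the kinetic identity $\mathbf{A}(t)=\int_0^\infty m^s\!\int(4\partial^2_{kl}f_R\,\partial_k\bar u_m\partial_l u_m-\Delta^2f_R|u_m|^2)\,dx\,dm$; you instead carry out that Balakrishnan computation explicitly (including the cancellation of the parasite terms), which is a legitimate and slightly more self-contained version of the same argument. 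Your remark that radiality of $u_m$ reduces the Hessian contraction to $f''(|x|/R)\,|\nabla u_m|^2$ on the annulus is exactly what is needed to pass from the general Hessian form to the stated formula; the paper uses this implicitly. Your treatment of the potential term $\mathbf{B}(t)$---integration by parts via $|u|^{p-1}\operatorname{Re}(\bar u\,\partial_k u)=\tfrac{1}{p+1}\partial_k|u|^{p+1}$, then splitting at $|x|=R$ and using $2sB=N(p-1)+2b$---coincides line by line with the paper's computation.
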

\begin{proof}
 Let us denote the differential operator $\mathbf{\Gamma}_{R}$ by
$$\mathbf{\Gamma}_{R}\,\phi  =-i\bigg(\textnormal{div}(\phi\nabla{f_R}) + \nabla{f_R}\cdot\nabla \phi\bigg),$$
which satisfies
$$<u(t),\mathbf{\Gamma}_{R}\, u(t)>=M_{R}[u(t)] .$$

Using \eqref{S}, the time derivative of $M_{R}[u(t)]$ reads 
\begin{eqnarray*}
\frac d{dt} M_{R} [u(t)] &=&<u(t),[\bD^{2s} ,i\mathbf{\Gamma}_{R} ]u(t)>+<u(t),[-|x|^{-b}|u|^{p-1},i\mathbf{\Gamma}_{R} ]u(t)>\\
&:=&\mathbf{A}(t)+\mathbf{B}(t),
\end{eqnarray*}
where the commutator of $X$ and $Y$ is $[X, Y ]:= XY -Y X$. Thanks to the computations done in \cite{bhl}, one writes
\begin{eqnarray*}
\mathbf{A}(t)&=&\int_0^\infty m^s\int_{\R^N}\Big(4\partial_k\bar u_m\partial^2_{kl}{f}_R\partial_lu_m-\Delta^2{f}_R|u_m|^2\,\Big)dx\,dm\\
&=&4\int_0^\infty m^s\int_{|x|<R}|\nabla u_m|^2\,dx\,dm+4\int_0^\infty m^s\int_{R<|x|<2R}{f''}\left(\frac{|x|}{R}\right)|\nabla u_m|^2\,dx\,dm\\
&-&\int_0^\infty m^s\int_{\R^N}\Delta^2{f_R}|u_m|^2\,dx\,dm.
\end{eqnarray*}
Let us denote the source term $\mathcal{N}:=|x|^{-b}|u|^{p-1}u$ and compute
\begin{eqnarray*}
\mathbf{B}(t)&=&-<u,\frac{\mathcal N}u\nabla{f}_R\cdot\nabla u>-<u,\frac{\mathcal N}u \textnormal{div}(u\nabla{f}_R)>\\
&+&<u,\nabla{f}_R\nabla\mathcal N>+<u,\textnormal{div}[\mathcal N\nabla{f}_R]>\\
&=&-2<u,\frac{\mathcal N}u\nabla{f}_R\nabla u>+2<u,\nabla{f}_R\nabla\mathcal N>\\
&=&2\int_{\R^N}|u|^2\nabla{f}_R\nabla[\frac{\mathcal N}u]\,dx.
\end{eqnarray*}
Integrating by parts yields
\begin{eqnarray*}
\mathbf{B}(t)
&=&2\int_{\R^N}|u|^2\nabla{f}_R\nabla[|x|^{-b}|u|^{p-1}]\,dx\\
&=&-2\int_{\R^N}[\nabla(|u|^2)\nabla{f}_R+|u|^2\Delta{f}_R]|x|^{-b}|u|^{p-1}\,dx\\
&=&-2\int_{\R^N}\nabla(|u|^2)\nabla{f}_R|x|^{-b}|u|^{p-1}\,dx-2\int_{\R^N}\Delta{f}_R|x|^{-b}|u|^{p+1}\,dx\\
&=&-2\int_{\R^N}\Delta{f}_R|x|^{-b}|u|^{p+1}\,dx-\frac4{1+p}\int_{\R^N}|x|^{-b}\nabla{f}_R\nabla(|u|^{1+p})\,dx.
\end{eqnarray*}
Hence,
\begin{eqnarray*}
\mathbf{B}(t)
&=&-\frac{2(p-1)}{1+p}\int_{\R^N}\Delta{f}_R|x|^{-b}|u|^{p+1}\,dx-\frac{4b}{1+p}\int_{\R^N}\frac{x\cdot\nabla{f}_R}{|x|^2}|x|^{-b}|u|^{1+p}\,dx\\
&=&-\frac{2N(p-1)}{1+p}\int_{\R^N}|x|^{-b}|u|^{p+1}\,dx+\frac{2(p-1)}{1+p}\int_{|x|>R}(N-\Delta{f}_R)|x|^{-b}|u|^{p+1}\,dx\\
&-&\frac{4b}{1+p}\int_{\R^N}\frac{x\cdot\nabla{f}_R}{|x|^2}|x|^{-b}|u|^{1+p}\,dx\\
&=&-\frac{4sB}{1+p}\int_{|x|<R}|x|^{-b}|u|^{p+1}\,dx+\frac{2(p-1)}{1+p}\int_{|x|>R}(N-\Delta{f}_R)|x|^{-b}|u|^{p+1}\,dx\\
&-&\frac{4b}{1+p}\int_{|x|>R}\frac{x\cdot\nabla{f}_R}{|x|^2}|x|^{-b}|u|^{1+p}\,dx.
\end{eqnarray*}
Thus, with the above calculus we obtain \eqref{E-M-R} as desired.
\end{proof}
\begin{cor}
\label{bnd1}
We have
\begin{equation}
\label{Bound1}
    \int_0^T\int_{|x|<R}|x|^{-b}|u(t,x)|^{1+p}\,dx\,dt\lesssim\,R+TR^{-b},\quad\mbox{for any}\quad T,R>0.
\end{equation}
\end{cor}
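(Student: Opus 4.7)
Labelling the six terms on the right-hand side of \eqref{E-M-R} in order as $T_1,\dots,T_6$, I would integrate this identity in $t$ over $[0,T]$ and move the negative ``focusing'' term $T_4(t)=-\tfrac{4sB}{1+p}\int_{|x|<R}|x|^{-b}|u|^{p+1}\,dx$ to the left-hand side, obtaining
$$\tfrac{4sB}{1+p}\int_0^T\!\int_{|x|<R}|x|^{-b}|u|^{p+1}\,dx\,dt = -\bigl(M_R[u(T)]-M_R[u(0)]\bigr) + \int_0^T(T_1+T_2+T_3+T_5+T_6)(t)\,dt.$$
The problem is thus reduced to bounding each piece on the right-hand side by $O(R+TR^{-b})$.

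The boundary term is handled as follows: since $|\nabla f_R|\lesssim R$ by \eqref{ff-R}, together with the uniform $H^s$-bound on $u$ (which comes from mass conservation and the coercivity \eqref{Coer3} valid in the sub-threshold regime of Theorem \ref{t1} implicitly at play here), I get $|M_R[u(t)]|\lesssim R$ uniformly, hence $|M_R(T)-M_R(0)|\lesssim R$. The far-field terms $T_5,T_6$ are supported in $\{|x|>R\}$ where $|x|^{-b}\leq R^{-b}$, and the Sobolev embedding $H^s\hookrightarrow L^{p+1}$ gives $\|u(t)\|_{L^{p+1}}^{p+1}\lesssim 1$; thus $|T_5(t)|+|T_6(t)|\lesssim R^{-b}$. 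The biharmonic term $T_3$ is handled by adapting Lemma \ref{improv4.2} with $\|\Delta^2 f_R\|_\infty\lesssim R^{-2}$ supported in $\{R<|x|<2R\}$, yielding $|T_3(t)|\lesssim R^{-2s}\lesssim R^{-b}$ since $b<2s$. Altogether, $\int_0^T\bigl(|T_3|+|T_5|+|T_6|\bigr)(t)\,dt\lesssim TR^{-b}$.

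The main obstacle is the positive kinetic contribution $\int_0^T(T_1+T_2)(t)\,dt$: the naive pointwise bound $T_1+T_2\lesssim\|\bD^s u\|^2\lesssim 1$ only yields $O(T)$, which is too weak for $R>1$. To extract the missing $R^{-b}$ decay I would invoke the localized coercivity \eqref{Coer2-R} applied to $\psi_{2R}u$, combined with Lemma \ref{locfrac} (which compares $\|\bD^s(\psi_{2R}u)\|^2$ with $\int_0^\infty m^s\int\psi_{2R}^2|\nabla u_m|^2\,dx\,dm$ modulo $O(R^{-1})$), after a compatibility choice of profiles ensuring $\psi_{2R}^2(x)\leq\mathbf{1}_{|x|<R}+f''(|x|/R)\mathbf{1}_{R<|x|<2R}$ pointwise. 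This produces the pointwise sub-threshold inequality
$$\tfrac{4sB}{1+p}\int_{|x|<R}|x|^{-b}|u|^{p+1}\,dx\;\leq\;(1-c)\bigl(T_1+T_2\bigr)(t)+O(R^{-1}),$$
for some $c\in(0,1)$. Plugging this back into \eqref{E-M-R} gives $\partial_t M_R(t)\geq c(T_1+T_2)(t)-O(R^{-b})$; integrating and using the previous bounds yields $c\int_0^T(T_1+T_2)(t)\,dt\lesssim R+TR^{-b}$. Re-inserting this into the pointwise inequality and integrating once more closes the estimate and proves \eqref{Bound1}.

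The hard part is establishing the pointwise domination $-T_4(t)\leq(1-c)(T_1+T_2)(t)+O(R^{-1})$: it depends on a careful matching of the cut-off profiles $\psi_{2R}$ and $f''(|\cdot|/R)$ on the transition annulus $\{R<|x|<2R\}$, and rests on the commutator estimates of Lemma \ref{E123} that feed into Lemma \ref{locfrac}---where all the fractional-Laplacian technicalities concentrate.
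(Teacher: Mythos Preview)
Your proposal is correct and follows essentially the same strategy as the paper: combine the localized virial identity \eqref{E-M-R} with the localized coercivity \eqref{Coer1-R}--\eqref{Coer2-R} and Lemma~\ref{locfrac} to convert the positive kinetic terms $T_1+T_2$ into control of the localized potential $-T_4$, then integrate in time. The error terms $T_3,T_5,T_6$ and the boundary contribution $|M_R|$ are handled exactly as you describe (the paper invokes \cite[Lemma~A.1, A.2]{bhl} for these).

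There is one simplification in the paper worth noting: instead of using $\psi_{2R}$ and requiring the profile compatibility $\psi_{2R}^2\leq\mathbf{1}_{|x|<R}+f''(|\cdot|/R)\mathbf{1}_{R<|x|<2R}$ that you flag as ``the hard part'', the paper uses $\psi_R$, which is supported in $B(R)$. Then $\psi_R^2\leq\mathbf{1}_{|x|<R}$ holds automatically, the term $T_2\geq0$ is simply dropped, and one writes
\[
T_1+T_4 \;\geq\; 4\!\int_0^\infty\!\! m^s\!\!\int_{\R^N}\psi_R^2|\nabla u_m|^2\,dx\,dm \;-\;\tfrac{4sB}{1+p}P[\psi_R u]\;+\;O(R^{-b}),
\]
the annulus discrepancy in the potential being absorbed into $O(R^{-b})$ since it lives in $\{R/2<|x|<R\}$. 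Applying Lemma~\ref{locfrac} and \eqref{Coer2-R} directly yields $\tfrac{d}{dt}M_R\gtrsim P[\psi_Ru]-O(R^{-b})$, so a single integration suffices and your two-step loop (bound $\int_0^T(T_1+T_2)$, then re-insert) is unnecessary. This lands on $\int_{|x|<R/2}$ rather than $\int_{|x|<R}$, which is of course equivalent after rescaling.
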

\begin{proof}
By \cite[Lemma A.2]{bhl}, one has
$$\bigg|\int_0^\infty m^s\int_{\R^N}\Delta^2{f_R}|u_m|^2\,dx\,dm\bigg|\lesssim R^{-2s}.$$
Thus, using the properties of $f_R$ and the fact that $b<2s$, one writes
\begin{eqnarray*}
\frac d{dt} M_{R} [u(t)]
&\geq&4\int_0^\infty m^s\int_{|x|<R}|\nabla u_m|^2\,dx\,dm-\frac{4sB}{1+p}\int_{|x|<R}|x|^{-b}|u|^{1+p}\,dx+O\Big(R^{-b}\Big).
\end{eqnarray*}
Taking into account Corollary \ref{bnd} and Lemma \ref{locfrac}, with the fact that $0\leq\psi\leq1$ and $\psi=0$ on $B(\frac12)$, one gets
\begin{eqnarray*}
\frac d{dt} M_{R} [u(t)]
&\geq&4\int_0^\infty m^s\int_{|x|<R}|\nabla u_m|^2\,dx\,dm-\frac{4sB}{1+p}\int_{|x|<R}|x|^{-b}|u|^{1+p}\,dx+O\Big(R^{-b}\Big)\\
&\geq&4\int_0^\infty m^s\int_{\R^N}\psi_R^2|\nabla u_m|^2\,dx\,dm+4\int_0^\infty m^s\int_{R/2<|x|<R}(1-\psi_R^2)|\nabla u_m|^2\,dx\,dm\\
&&-\frac{4sB}{1+p}P[\psi_R u]-\frac{4sB}{1+p}\int_{R/2<|x|<R}|x|^{-b}(1-\psi_R^{p+1})|u|^{1+p}\,dx+O\Big(R^{-b}\Big)\\
&\geq&4s\|\bD^{s}(\psi_R u)\|^2-\frac{4sB}{1+p}P[\psi_R u]+O\Big(R^{-b}\Big)\\
&\geq&cP[\psi_Ru]+O\Big(R^{-b}\Big).
\end{eqnarray*}
Hence,
\begin{eqnarray*}
\sup_{t\in[0,T]} |M_{R} [u(t)]|
&\gtrsim&\int_0^TP[\psi_Ru]\,ds+O\Big(R^{-b}\Big)T\\
&\gtrsim&\int_0^T\int_{|x|<\frac R2}|x|^{-b}|u|^{1+p}\,dx\,ds+O\Big(R^{-b}\Big)T.
\end{eqnarray*}
Thus, with previous computation via \cite[Lemma A.1]{bhl} and the assumption $s>\frac12$, one gets
\begin{eqnarray*}
\int_0^T\int_{|x|<\frac R2}|x|^{-b}|u|^{1+p}\,dx\,dt
&\leq& C\Big(\sup_{[0,T]}|M_R[u(t)]|+TR^{-b}\Big)\\
&\leq& C\Big(R+TR^{-b}\Big).
\end{eqnarray*}

\end{proof}
\begin{cor}
\label{bnd2}
For any sequence $R_n\to\infty$, there exists a sequence $t_n\to\infty$ such that 
\begin{equation}
\label{Bound2}
\displaystyle\lim_{n\to\infty}\,\int_{|x|<R_n}|x|^{-b}|u(t_n,x)|^{1+p}\,dx=0.
\end{equation}
\end{cor}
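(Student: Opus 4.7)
\medskip

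The plan is to deduce \eqref{Bound2} from the Morawetz-type bound \eqref{Bound1} by a mean-value (pigeonhole) argument. Set
\[
g_n(t):=\int_{|x|<R_n}|x|^{-b}|u(t,x)|^{1+p}\,dx,
\]
which is a non-negative continuous function of $t$ thanks to the $H^s$-continuity of the flow together with the Gagliardo--Nirenberg inequality \eqref{Nirenberg}. Corollary \ref{bnd1} gives, for every $T>0$,
\[
\int_0^T g_n(t)\,dt\le C\bigl(R_n+T R_n^{-b}\bigr).
\]

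The key observation is that one has two free parameters, the radius $R_n$ (prescribed) and the time horizon $T$ (to be chosen). Picking $T=T_n$ with $T_n/R_n\to\infty$---the simplest choice being $T_n:=R_n^2$---yields
\[
\frac{1}{T_n}\int_0^{T_n}g_n(t)\,dt\le C\Big(\frac{R_n}{T_n}+R_n^{-b}\Big)\xrightarrow[n\to\infty]{}0,
\]
since $b>0$ and $R_n\to\infty$. To ensure the time $t_n$ we extract also tends to infinity, I would actually integrate over the window $[T_n/2,T_n]$ (which still satisfies the same bound up to a factor $2$) and invoke the mean-value theorem for integrals: there exists $t_n\in[T_n/2,T_n]$ with
\[
g_n(t_n)\le \frac{2}{T_n}\int_{T_n/2}^{T_n}g_n(t)\,dt\le \frac{2C}{T_n}\bigl(R_n+T_n R_n^{-b}\bigr)\le 2C\Big(\frac{R_n}{T_n}+R_n^{-b}\Big).
\]
This gives both $t_n\ge T_n/2\to\infty$ and $g_n(t_n)\to 0$, which is precisely \eqref{Bound2}.

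There is no serious obstacle here: the whole content of the statement is packaged in the Morawetz estimate \eqref{Bound1}. The only mild point worth being careful about is the choice of the time window so that the extracted sequence $t_n$ diverges, and the requirement $b>0$ (already built into the hypotheses of the paper) so that $R_n^{-b}\to 0$. Everything else is a direct pigeonhole on the time-averaged Morawetz bound.
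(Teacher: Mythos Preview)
Your proof is correct and follows essentially the same approach as the paper: both average the Morawetz bound \eqref{Bound1} over the window $[T_n/2,T_n]$ and extract $t_n$ by the integral mean value theorem. The only cosmetic difference is that the paper takes $T_n=R_n^{1+b}$ whereas you take $T_n=R_n^2$; either choice (indeed any $T_n$ with $T_n/R_n\to\infty$) works.
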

\begin{proof}
Let $(R_n)$ be a sequence of positive numbers tending to infinity. By taking $T_n=R_n^{1+b}$, it follows from \eqref{Bound1}
that
$$\frac{2}{T_n}\int_{T_n/2}^{T_n}\int_{|x|<{R_n}}\,|x|^{-b}|u(t,x)|^{1+p}\,dx\,dt\lesssim R_n^{-b} \to0\;\;\text{as}\;\; n\to\infty.$$
The proof follows using the integral mean value theorem.
\end{proof}
%%%%%%%%%%%%%%%%%%%%%%%%%%%%%%%%%%%%%%%%%%%%%%%%%%%%%%%%%%%%%%%%%%%%%%%%%%%%%%%%%%%%%%%%%%%%%%%%%
%%%%%%%%%%%%%%%%%%%%%%%%%%%%%%%%%%%%%%%%%%%%%%%%%%%%%%%%%%%%%%%%%%%%%%%%%%%
\section{Scattering criterion}
%%%%%%%%%%%%%%%%%%%%%%%%%%%%%%%%%%%%%%%%%%%%%%%%%%%%%%%%%%%%%%%%%%%%%
\label{S5}
%%%%%%%%%%%%%%%%%%%%%%%%%%%%%%%%%
Here and hereafter, one denotes the real numbers
\begin{gather*}
a:=\frac{s(1+p-\theta)}{s-s_c},\quad d:=\frac{s(1+p-\theta)}{s+(p-\theta)s_c};\\
{q:=\frac{2s(1+p-\theta)}{2(s-s_c)+s_c(1+p-\theta)}};\\
r:=\frac{2N(1+p-\theta)}{(N-2s_c)(1+p-\theta)-4(s-s_c)}.
\end{gather*}
{Clearly, one can choose $\theta>0$  small enough so that}
$$(q,r)\in\Gamma,\quad  (a,r)\in \Gamma_{s_c},\quad (d,r)\in \Gamma_{-s_c}\quad\mbox{and}\quad (p-\theta)d'=a.$$

This section is devoted to the proof of the following scattering criterion. 
\begin{prop}\label{crt}
Take the assumptions of Theorem \ref{t1}. Let $u\in C(\R,H^s_{rad})$ be a global solution to \eqref{S}. Assume that 
\begin{equation}
\label{Bound-s}
0<\sup_{t\geq0}\|u(t)\|_{H^s}:=E<\infty.
\end{equation}
Then, there exist $R,\varepsilon>0$ depending on {$E,N,p,b,s$} such that if
\begin{equation}\label{crtr} 
\liminf_{t\to\infty}\int_{|x|<R}|u(t,x)|^2\,dx<\varepsilon^2,
\end{equation}
then, $u$ scatters for positive time. 
\end{prop}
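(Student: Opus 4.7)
The plan is to apply Tao's scattering criterion through a Strichartz-type bootstrap. Specifically, I will show that for $T$ suitably large
\[
\bigl\|e^{-i(t-T)\bD^{2s}}u(T)\bigr\|_{L^a([T,\infty);L^r)}<\eta
\]
with $\eta=\eta(E,N,p,b,s)$ small. Then Strichartz (Proposition \ref{prop2}) applied on $[T,\infty)$, together with the identity $(p-\theta)d'=a$ and the continuity Lemma \ref{boots} applied to $X(t)=\|u\|_{L^a([T,t];L^r)}$, produces the global bound $\|u\|_{L^a([T,\infty);L^r)}<\infty$; this in turn yields scattering in $H^s$ in a standard way.

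To realize the small linear bound, I use Duhamel to write
\[
e^{-i(t-T)\bD^{2s}}u(T)=e^{-it\bD^{2s}}u_0+i\int_0^{T}e^{-i(t-\tau)\bD^{2s}}\bigl(|x|^{-b}|u|^{p-1}u\bigr)(\tau)\,d\tau,
\]
and split the integral at $T-\delta$ into a far-past piece $F_1$ (linear evolution plus $[0,T-\delta]$) and a near-past piece $F_2$ (integral on $[T-\delta,T]$). For $t\geq T$ and $\tau\leq T-\delta$, the time separation $t-\tau\geq\delta$ allows me to apply the fractional dispersive estimate \eqref{free}; controlling the resulting $\bD^{2N(1-s)(\frac{1}{2}-\frac{1}{r})}(|x|^{-b}|u|^{p-1}u)$ via Lemmas \ref{chain}--\ref{lbnz}, Sobolev embedding on $H^s$, and the uniform bound $E$ produces $\|F_1\|_{L^a([T,\infty);L^r)}\lesssim T^{-\kappa}$ for some $\kappa>0$, \emph{provided} $s\geq N/(N+1)$ so that the derivative loss $2N(1-s)(\tfrac12-\tfrac1r)$ in \eqref{free} fits inside the $H^s$ budget.

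For $F_2$, the inhomogeneous Strichartz estimate yields
\[
\|F_2\|_{L^a([T,\infty);L^r)}\lesssim \bigl\||x|^{-b}|u|^{p-1}u\bigr\|_{L^{d'}([T-\delta,T];L^{r'})}.
\]
Decomposing spatially via $\psi_R+(1-\psi_R)$: on $|x|\geq R/2$ the bound $|x|^{-b}\lesssim R^{-b}$ contributes $\lesssim R^{-b}$; on $|x|\leq R$, H\"older together with the fractional radial Sobolev embedding \eqref{F-Sob-I} isolates one copy of $u$ in $L^2(|x|<R)$, which by \eqref{crtr} is bounded by $\varepsilon$ at $t=T$. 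This localized mass smallness is propagated onto the interval $[T-\delta,T]$ using the estimate $\bigl|\tfrac{d}{dt}\int\psi_R^2|u|^2\,dx\bigr|\lesssim R^{-1}$ (a Morawetz-type computation for the fractional group, in the spirit of Section \ref{S4}). After fixing $\delta=\delta(\varepsilon,R,E)$ small, the resulting bound is $\|F_2\|_{L^a([T,\infty);L^r)}\lesssim R^{-b}+\varepsilon^{\mu}$ for some $\mu>0$.

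The conclusion is quantitative: choose first $R$ large, then $\varepsilon,\delta$ small, finally $T$ large enough that $T^{-\kappa}$ is negligible, to achieve $\|e^{-i(t-T)\bD^{2s}}u(T)\|_{L^a([T,\infty);L^r)}<\eta$; Lemma \ref{boots} then closes the bootstrap. The main obstacle is the estimate on $F_1$: the derivative loss in \eqref{free} is precisely what forces the restriction $s\geq N/(N+1)$ (stronger than Strichartz's $s>N/(2N-1)$), and the same accounting in the $L^{r'}$ Leibniz decomposition of $|x|^{-b}|u|^{p-1}u$ produces the nonlinearity constraints $p>2(1-b/N)$ and, when $N=3$, $p<(N-2b)/(N-2s)$ stated in Theorem \ref{t1}. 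Handling the singular weight $|x|^{-b}$ inside a fractional derivative, and independently transporting the localized mass smallness across $[T-\delta,T]$, are the secondary technical points.
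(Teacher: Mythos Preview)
Your overall strategy follows the Dodson--Murphy/Tao criterion and matches the paper's, but the treatment of the far-past piece $F_1$ contains a genuine gap. You assert that the dispersive estimate \eqref{free} on $[0,T-\delta]$ yields $\|F_1\|_{L^a([T,\infty);L^r)}\lesssim T^{-\kappa}$. This is false: the dispersive decay is in $t-\tau$, and for $\tau$ close to $T-\delta$ and $t$ close to $T$ the separation is only $\delta$, so integrating $(t-\tau)^{-N(\frac12-\frac1r)}$ over $\tau\in[0,T-\delta]$ produces a bound of order $\delta$ to a \emph{negative} power, with no decay in $T$ at all. You then take $\delta$ small to control $F_2$; with $\delta$ small the $F_1$ bound blows up. The two requirements are in direct conflict, and your parameter hierarchy (``$R$ large, then $\varepsilon,\delta$ small, then $T$ large'') cannot close.

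The paper resolves this by taking the gap \emph{large}: the split is at $T-\varepsilon^{-\beta}$ with $\beta>0$ small. For $F_1$ one interpolates between the Strichartz-admissible endpoint $(a,c)\in\Gamma$ (where $\tfrac1c=\tfrac1r+\tfrac{s_c}N$) and $L^\infty$; the $L^\infty$ bound uses \eqref{free} with loss $\bD^{N(1-s)}$ (this is where $s\geq N/(N+1)$ and the constraints $p>2(1-b/N)$, $p<\tfrac{N-2b}{N-2s}$ for $N=3$ arise), and time-integration requires $\tfrac N2-1-\tfrac1a>0$, which holds only for $N\geq3$. The outcome is $\|F_1\|_{L^a([T,\infty);L^r)}\lesssim\varepsilon^{(1-\lambda)\beta(N/2-1-1/a)}$. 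The price is that $F_2$ now lives on the \emph{long} window $[T-\varepsilon^{-\beta},T]$: propagating the localized mass smallness there uses the fractional-Leibniz bound $\bigl|\tfrac d{dt}\int\psi_R|u|^2\,dx\bigr|\lesssim R^{-s/2}$ (not $R^{-1}$) and forces $R>\varepsilon^{-2(2+\beta)/s}$; the $(1-\psi_R)$ contribution picks up $\varepsilon^{-\beta/d'}$ from the interval length, absorbed by a negative power of $R$. Thus the correct coupling is $\varepsilon$ fixed small, $R$ a large negative power of $\varepsilon$, then $T$ large---not independent choices.
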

Before proving Proposition \ref{crt}, let us first give a technical result.
\begin{lem}\label{tch}
Let $I$ be a time slab. There exists $\theta>0$ small enough such that the global solution $u$ to \eqref{S} satisfies
\begin{enumerate}
\item[1)]
$\|u-e^{-it\bD^{2s}}u_0\|_{L^a(I,L^r)}\lesssim \|u\|_{L^\infty(I,H^s)}^\theta\|u\|^{p-\theta}_{L^a(I,L^r)}$;
\item[2)]
$\|(1+|\nabla|^s)(u-e^{-it\bD^{2s}}u_0)\|_{L^q(I,L^r)}\lesssim \|u\|_{L^\infty(I,H^s)}^\theta\|u\|^{p-1-\theta}_{L^a(I,L^r)}\||\nabla|^su\|_{L^q(I,L^r)}$.
\end{enumerate}
\end{lem}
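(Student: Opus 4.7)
The plan is to combine the Duhamel representation
\[
u(t) - e^{-it\bD^{2s}}u_0 = i\int_0^t e^{-i(t-\tau)\bD^{2s}}\left[|x|^{-b}|u|^{p-1}u\right](\tau)\,d\tau
\]
with the Strichartz machinery of Proposition \ref{prop2} and carefully chosen H\"older and Sobolev embeddings. The structural identity driving the argument is $(p-\theta)d' = a$, together with the admissibility properties $(a,r)\in\Gamma_{s_c}$, $(d,r)\in\Gamma_{-s_c}$, and $(q,r)\in\Gamma$.

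For part (1), I would apply Proposition \ref{prop2}(2) with $\gamma = s_c$, choosing the dual pair $(d,r)\in\Gamma_{-s_c}$, which reduces matters to estimating $\bigl\||x|^{-b}|u|^{p-1}u\bigr\|_{L^{d'}(I,L^{r'})}$. A spatial H\"older with $\frac{1}{r'} = \frac{1}{s_1} + \frac{p-\theta}{r}$ factors the integrand as $(|x|^{-b}|u|^\theta)\cdot |u|^{p-\theta}$; H\"older in time then exploits $(p-\theta)d' = a$ to force the companion time exponent on $\||x|^{-b}|u|^\theta\|_{L^{s_1}_x}$ to be $L^\infty_t$. I am thus reduced to proving the pointwise-in-time embedding $\||x|^{-b}|u|^\theta\|_{L^{s_1}_x} \lesssim \|u\|_{H^s}^\theta$, which is where the radial hypothesis is crucial. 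Splitting $\R^N$ into $\{|x|\leq 1\}$ and $\{|x|>1\}$, on the inner ball I use local integrability of $|x|^{-b}$ combined with the classical Sobolev embedding \eqref{Sob-emb}, whereas on the outer region I invoke the radial Sobolev inequality \eqref{F-Sob-I} to produce the decay $|u(x)|^\theta \lesssim |x|^{-\theta(N/2-s)}\|\bD^s u\|^\theta$, rendering $|x|^{-b-\theta(N/2-s)}$ integrable at infinity.

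For part (2), the scheme is parallel, but applied with $\gamma = 0$ using the pair $(q,r)\in\Gamma$, estimating separately $\|u - e^{-it\bD^{2s}}u_0\|_{L^q(I,L^r)}$ and $\|\bD^s(u - e^{-it\bD^{2s}}u_0)\|_{L^q(I,L^r)}$. The new ingredient is the estimate of $\bD^s\bigl[|x|^{-b}|u|^{p-1}u\bigr]$, handled by the fractional Leibniz rule (Lemma \ref{lbnz}) combined with the chain rule (Lemma \ref{chain}): distributing $\bD^s$ onto $|u|^{p-1}u$ produces the factor $\bD^s u$ which one places in $L^q(I,L^r)$, while the residual $|u|^{p-1}$ is regrouped as $|u|^\theta \cdot |u|^{p-1-\theta}$ and treated by the same H\"older-plus-radial-Sobolev argument as in part (1), yielding the advertised bound $\|\bD^s u\|_{L^q(I,L^r)}\|u\|_{L^a(I,L^r)}^{p-1-\theta}\|u\|_{L^\infty(I,H^s)}^\theta$.

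The main obstacle is the simultaneous satisfaction of the conditions imposed on the H\"older exponent $s_1$: local integrability $bs_1 < N$ near the origin, decay integrability $s_1(b+\theta(N/2-s)) > N$ at infinity, and Sobolev admissibility $H^s\hookrightarrow L^{\mu}$ for the complementary exponent produced by Sobolev embedding. These open conditions close precisely because of the strict inter-critical assumption $p_*<p<p^*$ (so $0<s_c<s$) and the radial Strichartz hypothesis $s > N/(2N-1)$, and they force $\theta>0$ to be taken sufficiently small. Propagating the constraint through the Leibniz splitting in part (2), where a fractional derivative may land on the weight $|x|^{-b}$, forces $\theta$ even smaller but introduces no new conceptual difficulty.
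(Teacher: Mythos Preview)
Your overall architecture (Duhamel + Strichartz with $(d,r)\in\Gamma_{-s_c}$, H\"older with the time matching $(p-\theta)d'=a$, Sobolev control of the $\theta$ leftover factor) is the same as the paper's. But there is a genuine gap in your treatment of the exterior region $\{|x|>1\}$ in part~(1).

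You group the integrand as $(|x|^{-b}|u|^\theta)\cdot|u|^{p-\theta}$ and set $\frac{1}{r'}=\frac{1}{s_1}+\frac{p-\theta}{r}$. With the explicit value of $r$ this forces
\[
\frac{1}{s_1}=1-\frac{1+p-\theta}{r}=\frac{b}{N}+\frac{\theta(N-2s_c)}{2N}.
\]
On $|x|>1$ you invoke the Strauss inequality \eqref{F-Sob-I} with exponent $s$, giving $|u(x)|^\theta\lesssim|x|^{-\theta(\frac{N}{2}-s)}\|\bD^s u\|^\theta$, and claim that $|x|^{-b-\theta(\frac{N}{2}-s)}\in L^{s_1}(|x|>1)$. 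But that integrability is equivalent to $\frac{1}{s_1}<\frac{b}{N}+\frac{\theta(N-2s)}{2N}$, i.e.\ to $N-2s_c<N-2s$, i.e.\ to $s<s_c$. This \emph{contradicts} the inter-critical hypothesis $0<s_c<s$. The Strauss decay at level $s$ is simply too weak for this particular $s_1$; replacing $s$ by a lower $\alpha$ via interpolation would require $\alpha<s_c$, incompatible with the constraint $\alpha>\frac12$ in \eqref{F-Sob-I} unless $s_c>\frac12$, which is not assumed.

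The paper avoids this by a \emph{three-factor} H\"older split $|x|^{-b}\cdot|u|^\theta\cdot|u|^{p-\theta}$ rather than your two-factor one. On $|x|>1$ the weight alone sits in $L^{\mu_1}(|x|>1)$ for any $\mu_1>\frac{N}{b}$ (no radial decay needed, just $b>0$), while $|u|^\theta$ is placed in $L^{r_1/\theta}$ with $r_1\in[2,\frac{2N}{N-2s_c})$, controlled by $\|u\|_{H^s}^\theta$ via the ordinary Sobolev embedding \eqref{Sob-emb} because $s_c<s$. The exponent bookkeeping closes precisely when $r_1<\frac{2N}{N-2s_c}$; having this extra room (rather than being pinned at $\frac{2N}{N-2s}$ as in the inner region) is what makes the outer estimate work. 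The radial assumption plays no role in this step---it enters only through the Strichartz estimates. The same correction applies wherever Strauss reappears in your part~(2).
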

\begin{proof}[{Proof of Lemma \ref{tch}}]
~\begin{enumerate}
\item[1)]
Using H\"older's inequality, one writes
\begin{eqnarray*}
\||x|^{-b}|u|^{p-1}u\|_{L^{r'}(|x|<1)}=\||x|^{-b}|u|^{p-\theta}|u|^{\theta}\|_{L^{r'}(|x|<1)}
&\leq&\||x|^{-b}\|_{L^\mu(|x|<1)}\|u\|_{L^{\frac{2N}{N-2s}}}^\theta\|u\|_{L^r}^{p-\theta},
\end{eqnarray*}
where
$$
\frac{1}{r'}=\frac{1}{\mu}+\frac{\theta(N-2s)}{2N}+\frac{p-\theta}{r},
$$
provided that
$$
0<\theta<\frac{2N}{N-2s},\;0<\theta<p,\;p-r<\theta,\;\frac{b}{N}<\frac{1}{\mu}.
$$
The last integrability condition reads
\begin{eqnarray}
\frac bN
&<&\frac1\mu=1-\frac{\theta(N-2s)}{2N}-\frac{1+p-\theta}r\label{intg1}\\
&=&1-\frac{\theta(N-2s)}{2N}-\frac{(N-2s_c)(1+p-\theta)-4(s-s_c)}{2N}\nonumber\\
&=&\frac{2N-\theta(N-2s)-(N-2s_c)(1+p-\theta)+4(s-s_c)}{2N}.\nonumber
\end{eqnarray}
This is equivalent to
\begin{eqnarray*}
2b
&<&2N-\theta(N-2s)-(N-2s_c)(1+p-\theta)+4(s-s_c)\\
&=&2N-\theta(N-2s)-(N-2s_c)(-1+p+2-\theta)+4(s-s_c)\\
&=&2\theta(s-s_c)+2b.
\end{eqnarray*}
This is obviously satisfied and gives via Sobolev embedding 
\begin{eqnarray*}
\||x|^{-b}|u|^{p-1}u\|_{L^{d'}(I,L^{r'}(|x|<1))}
&\lesssim&\|u\|_{L^\infty(H^s)}^\theta\|\|u(t)\|_{L^r}^{p-\theta}\|_{L^{d'}(I)}\\
&\lesssim&\|u\|_{L^\infty(H^s)}^\theta\|u(t)\|^{p-\theta}_{L^{a}(I, L^r)}.
\end{eqnarray*}
Let us estimate the same term on the complementary to the unit ball. Using H\"older's inequality, we have
\begin{eqnarray*}
\||x|^{-b}|u|^{p-1}u\|_{L^{r'}(|x|>1)}
&\leq&\||x|^{-b}\|_{L^{\mu_1}(|x|>1)}\|u\|_{L^{r_1}}^\theta\|u\|_{L^r}^{p-\theta}.
\end{eqnarray*}
Here, the integrability condition reads
\begin{eqnarray}
\frac bN
&>&\frac1{\mu_1}=1-\frac{\theta}{r_1}-\frac{1+p-\theta}r\label{intg2}\\
&=&1-\frac{\theta}{r_1}-\frac{(N-2s_c)(1+p-\theta)-4(s-s_c)}{2N}.\nonumber
\end{eqnarray}
This is equivalent to
\begin{eqnarray*}
\frac{\theta}{r_1}
&>&1-\frac bN-\frac{(N-2s_c)(1+p-\theta)-4(s-s_c)}{2N}\\
&=&\frac{2N-2b-(N-2s_c)(-1+p+2-\theta)+4(s-s_c)}{2N}\\
&=&\frac{\theta(N-2s_c)}{2N}.
\end{eqnarray*}
Thus, it is sufficient to choose $r_1\in[2,\frac{2N}{N-2s_c})$. The first point follows with Strichartz estimates arguing as previously.
\item[2)]
Using the first point with the equality $\frac1{q'}=\frac{p-1-\theta}a+\frac1q$, one has by Strichartz estimates
\begin{eqnarray*}
\|u-e^{-it\bD^{2s}}u_0\|_{L^q(I,L^r)}
&\lesssim&\||x|^{-b}|u|^{p-1}u\|_{L^{q'}(I,L^{r'})}\\
&\lesssim&\|u\|_{L^\infty(I,H^s)}^\theta\|\|u\|^{p-1-\theta}_{L^r}\|u\|_{L^r}\|_{L^{q'}(I)}\\
&\lesssim&\|u\|_{L^\infty(I,H^s)}^\theta\|u\|^{p-1-\theta}_{L^a(I,L^r)}\|u\|_{L^q(I,L^r)}.
\end{eqnarray*}
Now, let us estimate the term
\begin{eqnarray*}
&&\|\bD^{s}[|x|^{-b}|u|^{p-1}u]\|_{L^{q'}(I,L^{r'})}\\
&\lesssim&\||x|^{-b-s}|u|^{p-1}u]\|_{L^{q'}(I,L^{r'}(|x|<1))}+\||x|^{-b-s}|u|^{p-1}u]\|_{L^{q'}(I,L^{r'}(|x|>1))}\\
&+&\||x|^{-b}\bD^{s}[|u|^{p-1}u]\|_{L^{q'}(I,L^{r'}(|x|<1))}+\||x|^{-b}\bD^{s}[|u|^{p-1}u]\|_{L^{q'}(I,L^{r'}(|x|>1))}.
\end{eqnarray*}
Using H\"older's inequality, one writes
\begin{eqnarray*}
\||x|^{-b-s}|u|^{p-1}u]\|_{L^{r'}(|x|<1)}
&\leq&\||x|^{-b-s}\|_{L^\mu(|x|<1)}\|\|u\|_{L^{\frac{2N}{N-2s}}}^\theta\|u\|_{L^r}^{p-1-\theta}\|u\|_{L^{\frac{rN}{N-rs}}}\\
&\lesssim&\|u\|_{H^s}^\theta\|u\|_{L^r}^{p-1-\theta}\|\bD^{s}u\|_{L^r}.
\end{eqnarray*}
Here, one has the same condition \eqref{intg1} above
$$\frac{s+b}N<\frac1\mu=1+\frac sN+\theta(-\frac12+\frac sN)-\frac{p+1-\theta}r.$$
On the complementary of the unit ball, one has
\begin{eqnarray*}
\||x|^{-b-s}|u|^{p-1}u]\|_{L^{r'}(|x|>1)}
&\leq&\||x|^{-b-s}\|_{L^{\mu_1}(|x|>1)}\|u\|_{L^{r_1}}^\theta\|u\|_{L^r}^{p-1-\theta}\|u\|_{L^{\frac{rN}{N-rs}}}\\
&\lesssim&\|u\|_{H^s}^\theta\|u\|_{L^r}^{p-1-\theta}\|\bD^{s}u\|_{L^r}.
\end{eqnarray*}
Here, one has the same condition \eqref{intg2} above
$$\frac{s+b}N>\frac1{\mu_1}=1+\frac sN+\frac\theta{r_1}-\frac{p+1-\theta}r.$$
Thus, one can choose $r_2\in[2,\frac{2N}{N-2s_c})$. Now, using the fractional chain rule in Lemma \ref{chain}, one gets
\begin{eqnarray*}
\||x|^{-b}\bD^{s}[|u|^{p-1}u]\|_{L^{r'}(|x|<1)}
&\lesssim&\||x|^{-b}\|_{L^{\mu_2}(|x|<1)}\|u\|_{L^{r_2}}^\theta\|u\|_{L^r}^{p-1-\theta}\|\bD^{s}u\|_{L^r}\\
&\lesssim&\|u\|_{H^s}^\theta\|u\|_{L^r}^{p-1-\theta}\|\bD^{s}u\|_{L^r}.
\end{eqnarray*}
Here
$$
\left\{
\begin{array}{ll}
\frac{b}N<\frac1{\mu_2}=1-\frac\theta{r_2}-\frac{p+1-\theta}r;\\
2\leq r_2\leq\frac{2N}{N-2s}
\end{array}
\right.
$$
This condition is satisfied as before for $r_2=\frac{2N}{N-2s}$. The last term reads
\begin{eqnarray*}
\||x|^{-b}\bD^{s}[|u|^{p-1}u]\|_{L^{r'}(|x|>1)}
&\lesssim&\||x|^{-b}\|_{L^{\mu_3}(|x|>1)}\|u\|_{L^{r_3}}^\theta\|u\|_{L^r}^{p-1-\theta}\|\bD^{s}u\|_{L^r}\\
&\lesssim&\|u\|_{H^s}^\theta\|u\|_{L^r}^{p-1-\theta}\|\bD^{s}u\|_{L^r}.
\end{eqnarray*}
Here
$$
\left\{
\begin{array}{ll}
\frac{b}N>\frac1{\mu_3}=1-\frac\theta{r_3}-\frac{p+1-\theta}r;\\
2\leq r_3\leq\frac{2N}{N-2s}
\end{array}
\right.
$$
This condition is satisfied as before for $r_3\in[2,\frac{2N}{N-2s_c})$. Now, plugging all the above estimates together and using the equality $\frac1{q'}=\frac{p-1-\theta}a+\frac1q$, we obtain
\begin{eqnarray*}
\|\bD^{s}[|x|^{-b}|u|^{p-1}u]\|_{L^{q'}(I,L^{r'})}
&\lesssim&\|\|u\|_{H^s}^\theta\|u\|_{L^r}^{p-1-\theta}\|\bD^{s}u\|_{L^r}\|_{L^{q'}(I)}\\
&\lesssim&\|u\|_{H^s}^\theta\|u\|_{L^a(I,L^r)}^{p-1-\theta}\|\bD^{s}u\|_{L^{q}(I,L^r)}.
\end{eqnarray*}
The proof is finished via Strichartz estimates.
\end{enumerate}
\end{proof}

The key of the proof of the scattering criterion is the next result.
\begin{prop}\label{fn}
Suppose that the assumptions of Proposition \ref{crt} are fulfilled. Then, for any $\varepsilon>0$, there exist $T,\mu>0$ satisfying 
\begin{equation}
\label{cst}
\|e^{-i(t-T)\bD^{2s}}u(T)\|_{L^a((T,\infty),L^r)}\lesssim \varepsilon^\mu.
\end{equation}
\end{prop}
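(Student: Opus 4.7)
The plan is to mimic the Dodson--Murphy reduction adapted to the fractional inhomogeneous setting. Given $\varepsilon>0$, fix a large radius $R=R(\varepsilon,E)$ (to be tuned through the radial Sobolev inequality \eqref{F-Sob-I}), and use the hypothesis \eqref{crtr} together with continuity of $t\mapsto u(t)$ in $L^2$ to select a large time $T$ at which $\|u(T)\|_{L^2(|x|<R)}<\varepsilon$. Iterating the Duhamel formula backwards from $T$, decompose
\begin{equation*}
e^{-i(t-T)\bD^{2s}}u(T)=F_{0}(t)+F_{1}(t)+F_{2}(t),
\end{equation*}
where
\begin{align*}
F_{0}(t)&:=e^{-it\bD^{2s}}u_{0},\\
F_{1}(t)&:=i\int_{0}^{T-T'}e^{-i(t-\tau)\bD^{2s}}\bigl[|x|^{-b}|u|^{p-1}u\bigr](\tau)\,d\tau,\\
F_{2}(t)&:=i\int_{T-T'}^{T}e^{-i(t-\tau)\bD^{2s}}\bigl[|x|^{-b}|u|^{p-1}u\bigr](\tau)\,d\tau,
\end{align*}
with $T'>0$ an intermediate time to be taken as a negative power of $\varepsilon$. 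The task reduces to showing that each piece has $L^{a}((T,\infty),L^{r})$ norm of size $\varepsilon^{\mu_{i}}$ for some $\mu_{i}>0$, and then taking $\mu=\min_i\mu_i$.

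For $F_0$, since $u_0\in H^s_{rad}$, Proposition \ref{prop2} gives $\|e^{-it\bD^{2s}}u_0\|_{L^a(\R,L^r)}<\infty$, so the tail $\|F_0\|_{L^a((T,\infty),L^r)}$ is made $\leq\varepsilon$ by choosing $T$ large enough. For $F_2$, apply Strichartz together with the nonlinear estimate of Lemma \ref{tch} on the short interval $[T-T',T]$: this bounds $\|F_2\|_{L^a((T,\infty),L^r)}$ by a power of $\|u\|_{L^{a}([T-T',T],L^{r})}$ times a power of $E$. To extract smallness, split $u=\chi_{\{|x|<R\}}u+\chi_{\{|x|>R\}}u$. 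On $|x|<R$, interpolate between $L^{2}(|x|<R)$ and the Sobolev controlled Lebesgue norm coming from the embedding \eqref{Sob-emb}, using mass conservation and continuity of $t\mapsto u(t)$ in $L^{2}$ to propagate the smallness from $t=T$ backwards over the short window $[T-T',T]$, producing a positive power of $\varepsilon$. On $|x|>R$, invoke the fractional radial Sobolev inequality \eqref{F-Sob-I} to obtain $|u(t,x)|\lesssim|x|^{s-N/2}\,E$, which yields an $O(R^{-\kappa})$ contribution for some $\kappa>0$. Combining, $\|F_2\|_{L^a((T,\infty),L^r)}\lesssim\varepsilon^{\mu_2}$ after choosing $R$ large.

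The delicate term is $F_1$. Here $t-\tau>T'$ for $t>T$ and $\tau<T-T'$, so the dispersive estimate \eqref{free} yields
\begin{equation*}
\bigl\|e^{-i(t-\tau)\bD^{2s}}[|x|^{-b}|u|^{p-1}u](\tau)\bigr\|_{L^{r}}\lesssim|t-\tau|^{-N(\frac{1}{2}-\frac{1}{r})}\,\bigl\|\bD^{2N(1-s)(\frac{1}{2}-\frac{1}{r})}\bigl[|x|^{-b}|u|^{p-1}u\bigr](\tau)\bigr\|_{L^{r'}}.
\end{equation*}
The fractional derivative of the nonlinearity is estimated with the Leibniz and chain rules of Lemmas \ref{lbnz}--\ref{chain}, splitting the weight $|x|^{-b}$ into pieces on $|x|<1$ and $|x|>1$ exactly as in the proof of Lemma \ref{tch}, and invoking the uniform bound \eqref{Bound-s} together with Sobolev embedding. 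The radial Sobolev decay \eqref{F-Sob-I} is again used on the outer region to absorb the extra weight produced by the loss of derivatives. Taking $L^{a}$ in $t$ on $(T,\infty)$ and integrating in $\tau$ produces a bound $\|F_{1}\|_{L^{a}((T,\infty),L^{r})}\lesssim {T'}^{-\delta}$ for some $\delta>0$. Choosing $T'\sim\varepsilon^{-1/\delta}$ balances the three pieces and yields \eqref{cst}. The main obstacle is precisely this $F_{1}$ bound: the $2N(1-s)(1/2-1/r)$ derivatives lost in \eqref{free} must not exceed the $H^{s}$ regularity ceiling imposed by $E$ while remaining compatible with the singular weight $|x|^{-b}$ and with time integrability in $\tau$. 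It is exactly this tension that forces the restrictions $s\geq N/(N+1)$, $p>2(1-b/N)$, and (when $N=3$) $p<(N-2b)/(N-2s)$ appearing in Theorem \ref{t1}.
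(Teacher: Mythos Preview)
Your overall architecture matches the paper's: Duhamel from time $T$, three pieces $F_0,F_1,F_2$, choose $T'=\varepsilon^{-\beta}$. The handling of $F_0$ is correct.

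There is, however, a genuine gap in your treatment of $F_1$. You apply the dispersive estimate \eqref{free} directly at the Lebesgue exponent $r$, which yields the decay rate $|t-\tau|^{-N(\frac12-\frac1r)}$. After integrating in $\tau$ and taking $L^a_t$ on $(T,\infty)$, you would need $N(\frac12-\frac1r)>1+\frac1a$. Using $(a,r)\in\Gamma_{s_c}$, i.e.\ $N(\frac12-\frac1r)=\frac{2s}a+s_c$, this becomes $\frac{2s-1}{a}>1-s_c$, which \emph{fails} as $p\to p^*$ (since then $s_c\to s<1$ while $a\to\infty$). So your direct approach does not close on the full range $p_*<p<p^*$, and it does not produce the restrictions you attribute to it. The paper circumvents this by interpolating $\|F_1\|_{L^a(L^r)}\lesssim\|F_1\|_{L^a(L^c)}^{\lambda}\|F_1\|_{L^a(L^\infty)}^{1-\lambda}$ with $\frac1c=\frac1r+\frac{s_c}N$, so that $(a,c)\in\Gamma$; since $F_1$ is a difference of two free evolutions with $L^2$ data, Strichartz bounds the first factor uniformly. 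Only the $L^\infty$ piece is handled by dispersion, giving the maximal decay $|t-\tau|^{-N/2}$ at the price of losing $N(1-s)$ derivatives in $L^1$. It is this $L^1$ estimate of $\bD^{N(1-s)}\bigl(|x|^{-b}|u|^{p-1}u\bigr)$, split over $|x|\lessgtr1$, that forces $s\geq\frac{N}{N+1}$, $p>2(1-\frac bN)$, and (for $N=3$) the upper bound on $p$; and the $L^a_t$ integrability condition becomes $\frac N2-1>\frac1a$, which holds for all such $p$ when $N\geq3$ and fails for $N=2$ (handled separately in Appendix~A).

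A secondary point on $F_2$: invoking ``continuity of $t\mapsto u(t)$ in $L^2$'' is not enough to propagate the localized smallness $\|u(T)\|_{L^2(|x|<R)}<\varepsilon$ backwards over a window of length $T'\sim\varepsilon^{-\beta}\to\infty$. You need a quantitative bound on $\frac{d}{dt}\int\psi_R|u|^2$, which the paper obtains via the fractional Leibniz rule (Lemma~\ref{lbnz}) and which produces an $O(R^{-s/2})$ rate; choosing $R\gtrsim\varepsilon^{-2(2+\beta)/s}$ then keeps $\|\psi_R u\|_{L^\infty([T-T',T],L^2)}\lesssim\varepsilon$. Also, the outer piece is handled through the decay of $|x|^{-b}$ in the \emph{nonlinearity} (not by splitting $u$ with sharp cutoffs, which would not interact well with $H^s$).
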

\begin{proof}
 By the integral formula
\begin{eqnarray}
\nonumber
e^{-i(t-T)\bD^{2s}}u(T)
&=&e^{-it\bD^{2s}}u_0+i\int_0^Te^{-i(t-\tau)\bD^{2s}}[|x|^{-b}|u|^{p-1}u]\,d\tau\\
\nonumber
&=&e^{-it\bD^{2s}}u_0+i\Big(\int_0^{T-\varepsilon^{-\beta}}+\int_{T-\varepsilon^{-\beta}}^T\Big)e^{-i(t-\tau)\bD^{2s}}[|x|^{-b}|u|^{p-1}u]\,d\tau\\
\label{F1-2}
&:=&e^{-it\bD^{2s}}u_0+F_1+F_2.
\end{eqnarray}
\begin{enumerate}
\item[$\bullet$]  {\bf Estimate of the linear term}.\newline Since $(a,\frac{rN}{N+rs_c})\in\Gamma$, by Strichartz estimate and Sobolev embedding, one has
\begin{eqnarray*}
\|e^{-i\cdot\bD^{2s}}u_0\|_{L^a((T,\infty),L^r)}
&\lesssim&\||\nabla|^{s_c}e^{-i\cdot\bD^{2s}}u_0\|_{L^a((T,\infty),L^\frac{rN}{N+rs_c})}\lesssim\|u_0\|_{H^s}.
\end{eqnarray*}
Thus, one may choose $T_0>\varepsilon^{-\beta}>0$, where $\beta>0$, such that 
\begin{equation}
\label{cst1}
\|e^{-i\cdot\bD^{2s}}u_0\|_{L^a((T_0,\infty),L^r)}\leq \varepsilon^2.
\end{equation}
\item[$\bullet$] {\bf Estimate of the term $F_2$}.\newline By the assumption \eqref{crtr}, one has for $T>\varepsilon^{-\beta}$ large enough,
$$\int_{\R^N}\psi_R(x)|u(T,x)|^2\,dx<\varepsilon^2.$$
Moreover, a computation with the use of \eqref{S} gives
\begin{eqnarray*}
\frac d{dt}\int_{\R^N}\psi_R(x)|u(t,x)|^2\,dx
&=&2\int_{\R^N}\psi_R(x)\Re[\dot u(t,x)\bar u(t,x)]\,dx\\
&=&2\int_{\R^N}\psi_R(x)\Im[\bD^{2s}u(t,x)\bar u(t,x)]\,dx.
\end{eqnarray*}
Hence, 
\begin{eqnarray*}
\frac d{dt}\int_{\R^N}\psi_R|u|^2\,dx
&=&2\Im\int_{\R^N}\bD^{s}(\psi_R\bar u)\bD^{s}u\,dx\\
&=&2\Im\int_{\R^N}\bar u\bD^{s}\psi_R\bD^{s}u\,dx\\
&+&2\Im\int_{\R^N}\bD^{s}u\Big(\bD^{s}(\psi_R\bar{u})-\bar{u}\bD^{s}(\psi_R)-\psi_R\bD^{s}(\bar{u})\Big)\,dx.
\end{eqnarray*}
{By Lemma \ref{lbnz} with $\psi_R$ rather than $v$, one has via Sobolev embedding}{
\begin{eqnarray*}
\bigg|\frac d{dt}\int_{\R^N}\psi_R(x)|u(t,x)|^2\,dx\bigg|&\lesssim& R^{-s}+\|\bD^{s_1}\psi_R\|_{L^{p_1}}\,\|\bD^{s_2}\bar{u}\|_{L^{p_2}},\\
&\lesssim& R^{-s}+R^{\frac{N}{p_1}-s_1}\|u\|_{H^s},
\end{eqnarray*}
provided that $s_1, s_2, p_1, p_2$ satisfy
$$
s=s_1+s_2,\quad2\leq p_2\leq \frac{2N}{N-2(s-s_2)},\quad \frac{1}{2}=\frac{1}{p_1}+\frac{1}{p_2}.
$$
Let us choose $s_1=\theta\,s$ and $p_1=\frac{2N}{\theta\,s}$ for some $\theta\in(0,1)$. Hence 
$p_2=\frac{2N}{N-\theta\,s}$ and $\frac{N}{p_1}-s_1=-\frac{\theta}{2}s$. It follows that
\begin{eqnarray*}
\bigg|\frac d{dt}\int_{\R^N}\psi_R(x)|u(t,x)|^2\,dx\bigg|
&\lesssim& R^{-s}+R^{-\frac{\theta}{2}s}\|u\|_{H^s},\\
&\leq&C(N,s,\psi,\theta)\,R^{-\frac{\theta}{2}s},
\end{eqnarray*}
where $C(N,s,\psi,\theta)$ comes from the sharp constant in the Sobolev embedding \cite[Theorem 1.1, p. 226]{BCS} and $\|\mathbf{D}^{s_1}\psi\|_{p_1}$. Since 
$$
C(N,s,\psi,\theta)\to C(N,s,\psi)>0 \;\;\text{as}\;\;\theta\to 1,
$$
we deduce that
$$
\bigg|\frac d{dt}\int_{\R^N}\psi_R(x)|u(t,x)|^2\,dx\bigg|\lesssim R^{-\frac{s}{2}}.
$$
}
Therefore, for any $T-\varepsilon^{-\beta}\leq t\leq T$ and $R>\varepsilon^{-\frac{2(2+\beta)}{s}}$, we obtain
$$\|\psi_Ru(t)\|\leq\Big( \int_{\R^N}\psi_R(x)|u(T,x)|^2\,dx+C\frac{T-t}{R^\frac s2}\Big)^\frac12\leq C\varepsilon.$$
This gives
$$\|\psi_Ru\|_{L^\infty([T-\varepsilon^{-\beta},T],L^2)}\leq C\varepsilon.$$
By Strichartz estimate and Lemma \ref{tch}, one has
\begin{eqnarray*}
\|\psi_R|x|^{-b}|u|^p\|_{L^{d'}((T-\varepsilon^{-\beta},T),L^{r'})}
&\lesssim&\|u\|_{L^\infty((T-\varepsilon^{-\beta},T),H^1)}^\theta\|u\|_{L^a(J_1,L^r)}^{p-1-\theta}\|\psi_Ru\|_{L^a((T-\varepsilon^{-\beta},T),L^r)}\\
&\lesssim&|(T-\varepsilon^{-\beta},T)|^{\frac{p-1-\theta}{a}}\|u\|_{L^\infty((T-\varepsilon^{-\beta},T),H^1)}^{p-1}\|\psi_Ru\|_{L^a((T-\varepsilon^{-\beta},T),L^r)}\\
&\lesssim&\varepsilon^{-\beta\frac{p-1-\theta}{a}}\|\psi_Ru\|_{L^\infty((T-\varepsilon^{-\beta},T),L^r)}.
\end{eqnarray*}
Using Gagliardo-Nirenberg inequality, one has
\begin{eqnarray*}
\|\psi_R|x|^{-b}|u|^p\|_{L^{d'}((T-\varepsilon^{-\beta},T),L^{r'})}
&\lesssim&\varepsilon^{-\beta\frac{p-1-\theta}{a}}\|\psi_Ru\|_{L^\infty(J_1,L^2)}^{1-\frac Ns(\frac12-\frac1r)}\\
&\lesssim&\varepsilon^{-\beta\frac{p-1-\theta}{a}}\varepsilon^{1-\frac Ns(\frac12-\frac1r)}\\
&\lesssim&\varepsilon^{(1-\beta)\frac{(s-s_c)(p-1-\theta)}{s(1+p-\theta)}}.
\end{eqnarray*}
Now, let us estimate 
\begin{eqnarray*}
\|(1-\psi_R)|x|^{-b}|u|^p\|_{L^{r'}}
&\lesssim&\||x|^{-b}u^{p-\theta}u^\theta\|_{L^{r'}(|x|>R/2)}\\
&\lesssim&\||x|^{-b}\|_{L^{\mu}(|x|>R/2)}\|u\|_{L^{r_1}}^\theta\|u\|^{p-\theta}_{L^r}\\
&\lesssim&R^{N-b\mu}\|u\|_{L^{r_1}}^\theta\|u\|^{p-\theta}_{L^r}.
\end{eqnarray*}
Here
\begin{align*}
b>\frac N\mu&=N-\frac{N\theta}{r_1}-\frac{N(1+p-\theta)}r\\
&=N-\frac{N\theta}{r_1}-(\frac N2-s_c)(1+p-\theta)+2(s-s_c).
\end{align*}
This is equivalent to
\begin{align*}
r_1&<\frac{N\theta}{-b+N-(\frac N2-s_c)(1+p-\theta)+2(s-s_c)}\\
&=\frac{N\theta}{-b+N-\frac{2s-b}{p-1}(p-1+2-\theta)+2(s-\frac N2+\frac{2s-b}{p-1})}\\
&=\frac{N(p-1)}{2s-b}.
\end{align*}
Since $p>p_c$, one can choose $2\leq r_1\leq 2^*$. Thus, by Sobolev embedding
$$
\|(1-\psi_R)|x|^{-b}|u|^p\|_{L^{r'}}\lesssim R^{N-b\mu}\|u\|_{H^1}^\theta\|u\|^{p-\theta}_{L^r}\lesssim R^{N-b\mu}\|u\|^{p-\theta}_{H^1}.
$$
Hence,
\begin{eqnarray*}
\|(1-\psi_R)|x|^{-b}|u|^p\|_{L^{d'}((T-\varepsilon^{-\beta},T),L^{r'})}
&\lesssim&R^{N-b\mu}\varepsilon^{-\frac\beta{d'}}\\
&\lesssim&R^{N-b\mu}\varepsilon^{-\frac{\beta(s-s_c)(p-\theta)}{s(1+p-\theta)}}\\
&\lesssim&\varepsilon^{|N-b\mu|\frac{2+\beta}s-\frac{\beta(s-s_c)(p-\theta)}{s(1+p-\theta)}}.
\end{eqnarray*}
Then, choosing $0<\beta<<1$, there is $\gamma>0$ such that
\begin{eqnarray}
\nonumber
\|F_2\|_{L^a((T,\infty),L^r)}
&\lesssim&\varepsilon^{(1-\beta)\frac{(s-s_c)(p-1-\theta)}{s(1+p-\theta)}}+\varepsilon^{|N-b\mu|\frac{2+\beta}s-\frac{\beta(s-s_c)(p-\theta)}{s(1+p-\theta)}}\\
\label{cst2}
&\lesssim&\varepsilon^\gamma.
\end{eqnarray}

%%%%%%%%
\item[$\bullet$] {\bf Estimate of the term $F_1$}.\newline Take $\frac1c:=\frac1r+\frac{s_c}N$. Then $(a,c)\in\Gamma$ and there is $\lambda\in[0,1]$ such that $\frac1r:=\frac\lambda c$. By interpolation via the mass conservation, one writes
\begin{eqnarray*}
\|F_1\|_{L^a((T,\infty),L^r)}
&\lesssim&\|F_1\|_{L^a((T,\infty),L^c)}^\lambda\|F_1\|_{L^a((T,\infty),L^\infty)}^{1-\lambda}\\
&\lesssim&\|e^{-i(t-(T-\varepsilon^{-\beta}))\bD^{2s}}u(T-\varepsilon^{-\beta})-e^{-it\bD^{2s}}u_0\|_{L^a((T,\infty),L^c)}^\lambda\|F_1\|_{L^a((T,\infty),L^\infty)}^{1-\lambda}\\
&\lesssim&\|F_1\|_{L^a((T,\infty),L^\infty)}^{1-\lambda}.
\end{eqnarray*}
Using the dispersive estimate \eqref{free}, one has for $T\leq t$,
\begin{eqnarray*}
\|F_1\|_{L^{\infty}}
&\lesssim&\int_0^{T-\varepsilon^{-\beta}}\frac1{(t-s)^{\frac N2}}\|\bD^{N(1-s)}\Big(|x|^{-b}|u|^{p-1}u\Big)\|_{L^{1}}\,ds.
\end{eqnarray*}
Thanks to the fractional chain rule Lemma \ref{chain}, we get
\begin{eqnarray}
\|\bD^{N(1-s)}\Big(|x|^{-b}|u|^{p-1}u\Big)\|_{L^{1}}
&\lesssim&\||x|^{-b-N(1-s)}|u|^p\|_{L^{1}}+\||x|^{-b}|u|^{p-1}\bD^{N(1-s)}u\|_{L^{1}}\label{rstr}\\
&:=&(I)+(II).
\end{eqnarray}
By H\"older estimate, the term $(I_1):=\||x|^{-b-N(1-s)}|u|^p\|_{L^1(|x|<1)}$ satisfies
\begin{eqnarray*}
(I_1)
&\leq&\||x|^{-b-N(1-s)}\|_{L^\mu(|x|<1)}\|u\|_{L^{d}}^p\lesssim\|u\|_{H^s}^p.
\end{eqnarray*}
Here, $\mu$ and $d$ satisfy
$$
\left\{
\begin{array}{ll}
1=\frac1\mu+\frac pd,\\
\mu<\frac N{b+N(1-s)},\\
2\leq d\leq\frac{2N}{N-2s}.
\end{array}
\right.
$$
This requires that
$$
d\in(\frac{Np}{sN-b},\infty)\cap(2,\frac{2N}{N-2s}),
$$
which is possible if $p<2\frac{Ns-b}{N-2s}$. Moreover, the condition $p^*\leq 2\frac{Ns-b}{N-2s}$ is equivalent to $s\geq\frac N{2(N-1)}$, which is satisfied because $s<1$ and $N\geq2$.\\
By H\"older's inequality, the term $(I_2):=\||x|^{-b-N(1-s)}|u|^p\|_{L^1(|x|>1)}$ can be estimated as
\begin{eqnarray*}
(I_2)
&\leq&\||x|^{-b-N(1-s)}\|_{L^\gamma(|x|>1)}\|u\|_{L^{d}}^e\lesssim\|u\|_{H^s}^p.
\end{eqnarray*}
Here, $\gamma$ and $e$ satisfy
$$
\left\{
\begin{array}{ll}
1=\frac1\gamma+\frac pe,\\
\gamma>\frac N{b+N(1-s)},\\
2\leq e\leq\frac{2N}{N-2s}.
\end{array}
\right.
$$
This requires that
$$e\in(1,\frac{pN}{Ns-b})\cap(2,\frac{2N}{N-2s}),$$
which is possible if and only if $p>\frac{2(Ns-b)}N$.\\
Again, by H\"older's inequality, the term $(II_1):=\||x|^{-b}|u|^{p-1}\bD^{N(1-s)}u\|_{L^1(|x|<1)}$ satisfies
\begin{eqnarray*}
(II_1)
&\leq&\||x|^{-b}\|_{L^\beta(|x|<1)}\|u\|_{L^{f}}^{p-1}\|\bD^{N(1-s)}u\|\\
&\lesssim&\|u\|_{H^s}^{p-1}\|\bD^{N(1-s)}u\|\\
&\lesssim&\|u\|_{H^s}^{p-1}\|u\|^\theta\|\bD^{s}u\|^{1-\theta}\\
&\lesssim&\|u\|_{H^s}^{p}.
\end{eqnarray*}
Here, $\theta\in[0,1]$ and $\beta$ and $f$ satisfy
$$
\left\{
\begin{array}{ll}
1=\frac1\beta+\frac{p-1}f+\frac12,\\
\beta<\frac N{b},\\
2\leq f\leq\frac{2N}{N-2s},\\
0\leq N(1-s)\leq s.
\end{array}
\right.
$$
This is equivalent to
$$
\left\{
\begin{array}{ll}
%1=\frac1\beta+\frac{p-1}f+\frac12;\\
\frac12-\frac{p-1}f=\frac1\beta>\frac{b}N,\\
2\leq f\leq\frac{2N}{N-2s},\\
s\geq\frac N{1+N},
\end{array}
\right.
$$
which is possible if $\frac{N-2s}{2N}\leq\frac1f<\frac{1}{p-1}(\frac{1}{2}-\frac{b}{N})$. Thus,
\begin{equation}
\label{upper-p}
p-1<\frac{2N}{N-2s}\left(\frac{1}{2}-\frac{b}{N} \right).
\end{equation}

Using again H\"older's inequality, we estimate $(II_2):=\||x|^{-b}|u|^{p-1}\bD^{N(1-s)}u\|_{L^1(|x|>1)}$ as
\begin{eqnarray*}
(II_2)
&\leq&\||x|^{-b}\|_{L^\alpha(|x|>1)}\|u\|_{L^{j}}^{p-1}\|\bD^{N(1-s)}u\|\\
&\lesssim&\|u\|_{H^s}^{p-1}\|\bD^{N(1-s)}u\|\\
&\lesssim&\|u\|_{H^s}^{p-1}\|u\|^\theta\|\bD^{s}u\|^{1-\theta}\\
&\lesssim&\|u\|_{H^s}^p.
\end{eqnarray*}
Here, $\theta\in[0,1]$, $\alpha$ and $j$ satisfy
$$
\left\{
\begin{array}{ll}
1=\frac1\alpha+\frac{p-1}j+\frac12,\\
\alpha>\frac N{b},\\
2\leq j\leq\frac{2N}{N-2s},\\
s\geq\frac N{1+N}.
\end{array}
\right.
$$
This leads to
%equivalently
%$$
%\left\{
%\begin{array}{ll}
%\frac12-\frac{p-1}j<\frac bN;\\
%2\leq j\leq\frac{2N}{N-2s};\\
%s\geq\frac N{1+N}.
%\end{array}
%\right.
%$$
%This reads
%$$
%\left\{
%\begin{array}{ll}
%\frac12-\frac{p-1}j<\frac bN;\\
%2\leq j\leq\frac{2N}{N-2s};\\
%s\geq\frac N{1+N}.
%\end{array}
%\right.
%$$
%Thus, 
$$
\left\{
\begin{array}{ll}
\frac12-\frac bN<\frac{p-1}j\leq\frac{p-1}2,\\
2\leq j\leq\frac{2N}{N-2s},\\
s\geq\frac N{1+N},
\end{array}
\right.
$$
which is possible if 
\begin{equation}
\label{lower-p}
p-1>1-\frac{2b}N.
\end{equation}

Summarize the above estimates, we infer that $(II)\lesssim\|u\|_{H^s}^{p}$ if 
\begin{equation}
\label{low-upp-p}
2-\frac{2b}N<p<1+\frac{2N}{N-2s}(\frac12-\frac bN).
\end{equation}
%%%%%%%%%%%%%%%%%%%%%

Under the above restrictions \eqref{low-upp-p}, one has
\begin{eqnarray*}
\|F_1\|_{L^{\infty}}
&\lesssim&\int_0^{T-\varepsilon^{-\beta}}\frac1{(t-\tau)^{\frac N2}}\|\bD^{N(1-s)}\Big(|x|^{-b}|u|^{p-1}u\Big)\|_{L^{1}}\,d\tau\\
&\lesssim&\int_0^{T-\varepsilon^{-\beta}}\frac1{(t-\tau)^{\frac N2}}\|u(\tau)\|_{H^s}^{p}\,d\tau\\
&\lesssim&(t-T+\varepsilon^{-\beta})^{1-\frac N2}.
\end{eqnarray*}
It follows that, if $\frac N2-1-\frac1a>0$,
\begin{eqnarray}
\nonumber
\|F_1\|_{L^a((T,\infty),L^r)}
&\lesssim&\|F_1\|_{L^a((T,\infty),L^\infty)}^{1-\lambda}\\
\nonumber
&\lesssim&\Big(\int_T^\infty(t-T+\varepsilon^{-\beta})^{a[1-\frac N2]}\,dt\Big)^{\frac{1-\lambda}a}\\
\label{cst3}
&\lesssim&\varepsilon^{(1-\lambda)\beta(\frac N2-1-\frac1a)}.
\end{eqnarray}
Note that the condition $\frac N2-1-\frac1a>0$ translate to
\begin{equation}\label{false}
\frac N2-1>\frac{s-s_c}{s(1+p)}.
\end{equation}
Clearly \eqref{false} {is false for $N=2$} and reads for $N\geq3$, $Q(p-1)>0$, where $Q$ is the polynomial function 
$$Q(X):=s(N-2)X^2+((1+2s)N-6s)X-2(2s-b).$$
Let us compute 
\begin{eqnarray*}
Q(p^*-1)
&=&s(N-2)\Big(\frac{2(2s-b)}{N-2s}\Big)^2+((1+2s)N-6s)\Big(\frac{2(2s-b)}{N-2s}\Big)-2(2s-b)\\
&=&\frac{2(2s-b)}{(N-2s)^2}\Big[2s(N-2)(2s-b)+((1+2s)N-6s)(N-2s)-(N-2s)^2\Big]\\
&=&\frac{2(2s-b)}{(N-2s)^2}\Big[2s(N-2)(2s-b)+(N-2s)((1+2s)N-6s-(N-2s))\Big]\\
&=&\frac{2(2s-b)}{(N-2s)^2}\Big[2s(N-2)(2s-b)+2s(N-2s)^2\Big]\\
&=&\frac{4s(2s-b)}{(N-2s)^2}\Big[(N-2)(2s-b)+(N-2s)^2\Big]\\
&>&0.
\end{eqnarray*}
{Moreover, one can easily verify that 
$$
Q(p_*-1)=\frac{4s(2s-b)}{N^2}\Big[(N-2)(2s-b)+N(N-3)\Big]>0,
$$
provided that $N\geq 3$. This means that $Q(p-1)>0$ as long as $p\in(p_*, p^*)$.}

\end{enumerate}
Now, the desired estimate \eqref{cst} easily follows from \eqref{cst1}, \eqref{cst2} and \eqref{cst3}. This finishes the proof of Proposition \ref{fn}. 
\end{proof}
Now, we are ready to prove the scattering criterion.
\begin{proof}[{Proof of Proposition \ref{crt}}]
Take $0<\varepsilon<<1$. 
By Proposition \ref{fn}, there exists $\mu>0$ such that
$$\|e^{-it\bD^{2s}}u(T)\|_{L^a((0,\infty),L^r)}=\|e^{-i(t-T)\bD^{2s}}u(T)\|_{L^a((T,\infty),L^r)}\lesssim\varepsilon^\mu.$$
So, with Lemma \ref{tch} together with the continuity argument Lemma \ref{boots}, one gets
$$\|u\|_{L^a((T,\infty),L^r)}\lesssim\varepsilon^\mu.$$
Again, thanks to Lemma \ref{tch} via the continuity argument Lemma \ref{boots}, we obtain the global bound
$$u\in L^q(\R, W^{s,r}).$$
Now, for $t>t'>>1$,  we have by Lemma \ref{tch} and Strichartz estimates
\begin{eqnarray*}
\|u(t')-u(t)\|_{H^s}
&\lesssim&\|(1+|\nabla|^s)[|x|^{-b}|u|^{p-1}u]\|_{S(t',t)}\\
&\lesssim&\|u\|_{L^a((t,t'),L^r)}^{p-1}\|u\|_{L^q((t',t),W^{s,r})}\\
&&\to0.
\end{eqnarray*}

The proof is achieved via the Cauchy criterion.
\end{proof}
%%%%%%%%%%%%%%%%%%%%%%%%%%%%%%%%%%%%%%%%%%%%%%%%%%%%%%%%%%%%%%%%%%%%%%%%%%%%%%%%%%%%%%%%%%%%%%%%%%%%%%%%%%%%%%%%%%%%%%%%%%%
\section{Proof of  Theorem \ref{t1}}
%%%%%%%%%%%%%%%%%%%%%%%%%%%%%%%%%%%%%%%%%%%%%%%%%%%%%%%%%%%%%%%%%%%%%
\label{S6}
%%%%%%%%%%%%%%%%%%%%%%%%%%%%%%%%%%%%%%%%%%%%%%%%%%%%%%%
Let $R,\varepsilon>0$ given by Proposition \ref{crt} and $R_n\to\infty$. Let $t_n\to\infty$ given by Corollary \ref{bnd2}. For $n>>1$ such that $R_n>R$, one gets by H\"older's inequality
\begin{eqnarray*}
\int_{|x|\leq R}|u(t_n,x)|^2\,dx
&\leq &R^{\frac{2b}{1+p}}\int_{|x|\leq R}|x|^{-\frac{2b}{1+p}}|u(t_n,x)|^2\,dx\\
&\leq&R^{\frac{2b}{1+p}}\left|B(0,R)\right|^{\frac{1+p}{p-1}}\,\||x|^{-\frac{2b}{1+p}}|u(t_n,x)|^2\|_{L^\frac{1+p}2(|x|\leq R_n)}\\
&\leq&R^{\frac{2b+N(p-1)}{1+p}}\left(\int_{|x|\leq R_n}|x|^{-b}|u(t_n,x)|^{1+p}\,dx\right)^\frac2{1+p}\to0.
\end{eqnarray*}
Hence, the part (i) of Theorem \ref{t1}  follows from Proposition \ref{crt}.
%%%%%%%%%%%%%%%%%%%%%%%%%%%%%%%%%%%%%%%%%%%%%%%%%%%%%%%%%%%%%%%%%%%%%%%%%%%%%%%%%%%%%%%%%%%%%%%%%%%%%%%%%%%%%%%%%%%%%%%%%%%

%%%%%%%%%%%%%%%%%%%%%%%%%%%%%%%%%%%%%%%%%%%%%%%%%%%%%%%%%%%%%%%%%%%%%%%%%%%%%%%%%%%%%%%%%%%%%%%%%%%%%%%%%%%%%%%%%%%%%%%%%%%%
Let us turn now to part (ii) of Theorem \ref{t1}. We have
\begin{eqnarray*}
\frac d{dt}M_R[u(t)]
&=&4\int_0^\infty m^s\int_{\R^N}|\nabla u_m|^2\,dx\,dm-4\int_0^\infty m^s\int_{R<|x|<2R}\left(1-{f''}\left(\frac{|x|}{R}\right)\right)|\nabla u_m|^2\,dx\,dm\\
&-&\int_0^\infty m^s\int_{\R^N}\Delta^2{f_R}|u_m|^2\,dx\,dm\\
&-&\frac{4sB}{1+p}\int_{|x|<R}|x|^{-b}|u|^{p+1}\,dx+
\frac{2(p-1)}{1+p}\int_{|x|>R}(N-\Delta{f}_R)|x|^{-b}|u|^{p+1}\,dx\\
&-&\frac{4b}{1+p}\int_{|x|>R}\frac{x\cdot\nabla{f}_R}{|x|^2}|x|^{-b}|u|^{1+p}\,dx\\
&\leq&4s\|\bD^{s} u\|^2+CR^{-2s}\\
&-&\frac{4sB}{1+p}\int_{|x|<R}|x|^{-b}|u|^{p+1}\,dx+\frac{2(p-1)}{1+p}\int_{|x|>R}(N-\Delta{f}_R)|x|^{-b}|u|^{p+1}\,dx\\
&-&\frac{4b}{1+p}\int_{|x|>R}\frac{x\cdot\nabla{f}_R}{|x|^2}|x|^{-b}|u|^{1+p}\,dx,
\end{eqnarray*}
where $M_R[u(t)]$ is given by \eqref{M-R}.
Take $0<\varepsilon<\frac{p-1}{s}(s-\frac12)$ and $\frac12<\alpha:=\frac12+\frac{\varepsilon s}{p-1}<s<\frac N2$. Using the radial assumption and the Strauss type estimate in Lemma \ref{F-Sob-I}, via the interpolation $\|\bD^{\alpha}\phi\|\lesssim\|\phi\|^{1-\frac\alpha s}\,\|\bD^{s}\phi\|^{\frac\alpha s} $, one gets
\begin{eqnarray*}
\int_{|x|>R}|x|^{-b}|u(t,x)|^{1+p}\,dx
&\leq&R^{-b}\|u(t)\|^2\|u(t)\|_{L^\infty(|x|>R)}^{p-1}\\
&\lesssim&R^{-b}\Big(R^{\alpha-\frac N2}\|\bD^{\alpha}u(t)\|\Big)^{p-1}\\
&\lesssim&R^{-b-\frac{(N-1)(p-1)}2+\varepsilon s}\|\bD^{s}u(t)\|^{(p-1)\frac\alpha s}\\
&\lesssim&R^{-b-\frac{(N-1)(p-1)}2+\varepsilon s}\|\bD^{s}u(t)\|^{\varepsilon+\frac{p-1}{2s}}.
\end{eqnarray*}
Therefore,
\begin{eqnarray*}
\frac{d}{dt}M_R[u(t)]
&\leq&4sI[u(t)]+CR^{-2s}+CR^{-b-\frac{p-1}2(N-1)+\varepsilon s}\|\bD^{s} u(t)\|^{\varepsilon+\frac{p-1}{2s}}.
\end{eqnarray*}
If $u$ is global and does not blow-up in infinite time, it follows that $\|\bD^{s} u\|_{L^\infty(\R,L^2)}\lesssim1$.
Thus, taking $R>>1$ and using \eqref{bl}, one gets
$\frac d{dt}M_R[u(t)]\leq-2\eta<0$, for some $\eta>0$. Hence, for $t>0$ large enough, we get
\begin{equation}
\label{MR-bound}
M_R[u(t)]\leq-\eta t.
\end{equation}

Combining \eqref{MR-bound} with {\cite[Lemma A.1]{bhl}}, we obtain, for $t$ large enough,
\begin{equation}
\label{MR-bound1}
\eta t\leq -M_R[u(t)]=|M_R[u(t)]|{\lesssim \Big(1+\|\bD^{s}u(t)\|^{\frac{1}{s}}\Big).}
\end{equation}
This gives $\|\bD^{s}u(t)\|\gtrsim t^s$ for $t$ large enough, which is a contradiction.
This finishes the proof of part (ii) of Theorem \ref{t1}.  
%%%%%%%%%%%%%%%%%%%%%%%%%%%%%%%%%%%%%%%%%%%%%%%%%%%%%%%%%%%%%%%%%%%%%%%%%%%%%%%%%%%%%%%%%%%%%%%%%%%%%%%%%%%%%%%%%%%%%%%%%%%%
\section{Proof of Corollary \ref{t2}}
\label{S7}
%%%%%%%%%%%%%%%%%%%%%%%%%%%%%%%%%%%%%%%%%%%%%%%%%%%%%%%%%%%%%%%%%%%%%%%%%%%%%%%%%%%%%%%%%%%%%%%%%%%%%%%%%%%%%%%%%%%%%%%%%%%%%%%%%%%%%%%%%%%%%%%%%%%%%%%%%%%%%%%%%%%%%
%%%%%%%%%%%%%%%%%%%%%%%%%%%%%%%%%%%%%%%%%%%%%%%%%%%%%%%%%%%%%%%%%%%%%%%%%%%%%%%%%%%%%%%%%%%%%%%%%%%%%%%%%%%%%%%%%%%%%%%%%%%%

%%%%%%%%%%%%%%%%%%%%%%%%%%%%%%%%%%%%%%%%%%%%%%%%%%%%%%%%%%%%%%%%%%%%%%%%%%%%%%%%%%%%%%%%%%%%%%%%%%%%%%%%%%%%%%%%%%%%%%%%%%%%%%%%%%%%%%%%%%

%%%%%%%%%%%%%%%%%%%%%%%%%%%
The scattering part follows by Theorem \ref{t1} with the next result. Indeed, the classical scattering condition below the ground state threshold is stronger than \eqref{ss1}.
\begin{lem}
\label{L71}
Suppose that assumptions \eqref{t11} and \eqref{t12} are fulfilled. Then, there exists $\varepsilon>0$ such that \eqref{1} is satisfied. 
\end{lem}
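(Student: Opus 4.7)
The strategy is to recast both hypotheses in terms of a single scale-invariant quantity and then invoke a standard continuity-in-time argument.

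First, I would introduce $y(t):=\|\bD^s u(t)\|\,\|u(t)\|^{\gamma_c}$ and $y_Q:=\|\bD^s Q\|\,\|Q\|^{\gamma_c}$. Using mass conservation and the sharp Gagliardo--Nirenberg inequality \eqref{Nirenberg}, together with the algebraic identity $A+2\gamma_c=\gamma_c B$ (which follows from \eqref{AB} and $\gamma_c=(s-s_c)/s_c$), the inhomogeneous potential term satisfies
\[
P[u(t)]\,M[u(t)]^{\gamma_c}\leq K_{opt}\,y(t)^B .
\]
Plugging this into the energy functional yields $E[u(t)]\,M[u(t)]^{\gamma_c}\geq g(y(t))$, where
\[
g(y):=y^2-\frac{2K_{opt}}{p+1}\,y^B .
\]

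Next I would verify, using the Pohozaev identities \eqref{K-opt-1} together with the explicit formula \eqref{K-opt} for $K_{opt}$, that $g$ attains its unique positive maximum exactly at $y_Q$ and that
\[
g(y_Q)=\frac{B-2}{B}\,y_Q^{\,2}=\frac{B-2}{A}\,\|Q\|^{2+2\gamma_c}=E[Q]\,M[Q]^{\gamma_c}.
\]
Combined with the conservation laws, the hypothesis \eqref{t11} then rewrites as
\[
g(y(t))\leq E[u_0]\,M[u_0]^{\gamma_c}=\mathcal{ME}[u_0]\,g(y_Q)\leq(1-\eta_0)\,g(y_Q)
\]
for some $\eta_0>0$ depending only on $\mathcal{ME}[u_0]$.

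The hypothesis \eqref{t12} reads $y(0)<y_Q$, so the initial datum lies on the left branch of the well defined by $g$. Since $u\in C([0,T^*);H^s_{rad})$ by Proposition \ref{prop0}, the map $t\mapsto y(t)$ is continuous. A standard continuity argument (the level set $\{g\leq (1-\eta_0)g(y_Q)\}$ decomposes into two components separated by a neighbourhood of $y_Q$, and $y(t)$ cannot jump between them) shows that $y(t)$ remains on the left branch for all $t\in[0,T^*)$. Quantifying this: since $g$ is smooth with $g''(y_Q)<0$, there exists $\delta>0$ such that $y(t)\leq(1-\delta)\,y_Q$ uniformly in $t$. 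Invoking Gagliardo--Nirenberg once more, and using $K_{opt}\,y_Q^{\,B}=P[Q]\,\|Q\|^{2\gamma_c}$, we conclude
\[
P[u(t)]\,M[u(t)]^{\gamma_c}\leq K_{opt}\,y(t)^B\leq(1-\delta)^B\,P[Q]\,M[Q]^{\gamma_c},
\]
so that \eqref{1} holds with $\varepsilon:=1-(1-\delta)^B>0$.

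The main obstacle is the identification step: verifying that $y_Q$ is the critical point of $g$ and that $g(y_Q)$ coincides with the mass-energy threshold $E[Q]\,M[Q]^{\gamma_c}$. This requires careful bookkeeping with the identities $A+B=p+1$, $(p-1)s_c=s(B-2)$, and $A=\gamma_c(B-2)$, combined with the Pohozaev relations \eqref{K-opt-1} and the explicit value of $K_{opt}$; once these algebraic facts are in place, the rest is the standard continuity dichotomy in the spirit of Kenig--Merle.
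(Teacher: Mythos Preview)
Your proposal is correct and follows essentially the same route as the paper: define $g(y)=y^{2}-\frac{2K_{opt}}{p+1}y^{B}$, use the Pohozaev identities to identify $y_{Q}$ as the unique positive maximizer with $g(y_{Q})=E[Q]M[Q]^{\gamma_c}$, trap $y(t)$ on the left branch by continuity, and conclude via Gagliardo--Nirenberg and $K_{opt}\,y_{Q}^{B}=P[Q]M[Q]^{\gamma_c}$. The only cosmetic difference is that the paper rescales and studies $\tau\mapsto\frac{B}{B-2}\tau^{2}-\frac{2}{B-2}\tau^{B}$ to extract the quantitative gap, whereas you obtain it directly from the strict concavity at $y_{Q}$; both yield the same conclusion.
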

\begin{proof}
Using the identity $A+2\gamma_c=B\gamma_c$, we have
\begin{eqnarray*}
E[u][M[u]]^{\gamma_c}
&\geq&\|\nabla u\|^2\|u\|^{2\gamma_c}-\frac{2K_{opt}}{1+p}\|u\|^{A+2\gamma_c}\|\nabla u\|^B\\
&=&g(\|\nabla u\|\|u\|^{\gamma_c}),
\end{eqnarray*}
where $g(\tau)=\tau^2-\frac{2K_{opt}}{1+p}\tau^{B}$.

Now, with Pohozaev identities and the conservation laws, one has for some $0<\varepsilon<1$, 
\begin{eqnarray*}
g(\|\nabla u\|\|u\|^{\gamma_c})
&\leq&E[u][M[u]]^{\gamma_c}\\
&<&(1-\varepsilon)E[Q][M[Q]]^{\gamma_c}\\
&=&(1-\varepsilon)g(\|\nabla\,Q\|\|Q\|^{\gamma_c}).
\end{eqnarray*}
Thus, with time continuity, \eqref{t12} is invariant under the flow \eqref{S} and hence $T^*=\infty$. Moreover, by Pohozaev identities, one writes
$$E[Q][M[Q]]^{\gamma_c}=\frac{B-2}{B}(\|\nabla\,Q\|\|Q\|^{\gamma_c})^2=\frac{K_{opt}(B-2)}{1+p}(\|\nabla\,Q\|\|Q\|^{\gamma_c})^B$$
and so
$$1-\varepsilon\geq\frac B{B-2}\Big(\frac{\|\nabla u\|\|u\|^{\gamma_c}}{\|\nabla\,Q\|\|Q\|^{\gamma_c}}\Big)^2-\frac2{B-2}\Big(\frac{\|\nabla u\|\|u\|^{\gamma_c}}{\|\nabla\,Q\|\|Q\|^{\gamma_c}}\Big)^B.$$
Following the variations of $\tau\mapsto\frac B{B-2}\tau^2-\frac2{B-2}\tau^B$ via the assumption \eqref{t12} and a continuity argument, there is a real number denoted also by $0<\varepsilon<1$,  such that
\begin{equation}
\label{bbd1}
\|\nabla u\|\|u\|^{\gamma_c}\leq (1-\varepsilon)\|\nabla\,Q\|\|Q\|^{\gamma_c}\quad\mbox{on}\quad \R.
\end{equation}

Now, by \eqref{bbd1} and Pohozaev identities, ther exits  $0<\varepsilon<1$, such that
\begin{eqnarray*}
P[u][M[u]]^{\gamma_c}
&\leq& K_{opt}\|\nabla u\|^B\|u\|^{A+2\gamma_c}\\
&\leq&K_{opt}(1-\varepsilon)(\|\nabla\,Q\|\|Q\|^{\gamma_c})^B\\
&\leq&(1-\varepsilon)\frac{1+p}B(\|\nabla\,Q\|\|Q\|^{\gamma_c})^2\\
&\leq&(1-\varepsilon)P[Q]M[Q]^{\gamma_c}.
\end{eqnarray*}
This finishes the proof of Lemma \ref{L71}.
\end{proof}
%%%%%%%%%%%%%%%%%%%%%%%%%%%%%%%%%%%%%%%%%%%%%%%%%%%%%%%%%%%%%%%%%%%%%%%%%%%%%%%%%%%%%%%%%%%%%%%%%%%%%%%%%%%%%%%%%%%%%%%%%%%%

%%%%%%%%%%%%%%%%%%%%%%%%%%%%%%%%%%%%%%%%%%%%%%%%%%%%%%%%%%%%%%%%%%%%%%%%%%%%%%%%%%%%%%%%%%%%%%%%%%%%%%%%%%%%%%%%%%%%%%%%%%%%%%%%%%%%%%%%%%

%%%%%%%%%%%%%%%%%%%%%%%%%%
Let us turn now to the blow-up part in Corollary \ref{t2}. Assume that \eqref{t13} is satisfied. Taking into account \cite[Section 3]{pz}, one has $\mathcal{MG}[u(t)]>1$ on $[0,T^*)$. Thus, by the Pohozaev identity $B\,E[Q]=(B-2)\|\bD^{s}\,Q\|^2$. It follows that
\begin{eqnarray*}
I[u][M[u]]^{\gamma_c}
&=&\Big(\|\bD^{s} u\|^2-\frac{B}{1+p}P[u]\Big)[M[u]]^{\gamma_c}\\
&=&\frac{B}{2}E[u][M[u]]^{\gamma_c}-(\frac{B}{2}-1)\|\bD^{s} u\|^2[M[u]]^{\gamma_c}\\
&\leq&\frac{B}{2}(1-\varepsilon)E[Q][M[Q]]^{\gamma_c}-(\frac{B}{2}-1)\|\bD^{s}Q\|^2[M[Q]]^{\gamma_c}\\
&\leq&-\varepsilon\|\bD^{s}\,Q\|^2[M[Q]]^{\gamma_c}.
\end{eqnarray*}
The proof follows by the use of Theorem \ref{t1}.

%%%%%%%%%%%%%%%%%%%%%%%%%%%%%%%%%%%%%%%%%%%%%%%%%%%%%%%%%%%%%%%%%%%%%%%%%%%%%%%%%%%%%%%%%%
\begin{appendix}
\section{The two dimensional case}\label{appendix1}
%%%%%%%%%%%%%%%%%%%%%%%%%%%%%%%%%%%%%%%%%%%%%%%%%%%%%%%%%%%%%%%%%%%%%%%%%%%%%%
One keeps the notations of section 5 about the scattering criteria and the estimate of the term $F_1$ for $N=2$. Take $\frac1c:=\frac1r+\frac{s_c}N$. Then $(a,c)\in\Gamma$ and for $f\geq r$, there is $\lambda\in[0,1]$ such that $$\frac1r:=\frac\lambda c+\frac{1-\lambda}f=\lambda(\frac1r+\frac{s_c}N)+\frac{1-\lambda}f.$$
Indeed, $\frac1r-\frac1f=\lambda(\frac1r+\frac{s_c}N-\frac1f)$ gives 
$$\frac{\frac1r-\frac1f}{\frac1r+\frac{s_c}N-\frac1f}=\lambda\in[0,1], \quad\mbox{for}\quad f\geq r.$$
 By interpolation via the mass conservation, one writes
\begin{eqnarray*}
\|F_1\|_{L^a((T,\infty),L^r)}
&\lesssim&\|F_1\|_{L^a((T,\infty),L^c)}^\lambda\|F_1\|_{L^a((T,\infty),L^f)}^{1-\lambda}\\
&\lesssim&\|e^{-i(t-(T-\varepsilon^{-\beta}))\bD^{2s}}u(T-\varepsilon^{-\beta})-e^{-it\bD^{2s}}u_0\|_{L^a((T,\infty),L^c)}^\lambda\|F_1\|_{L^a((T,\infty),L^f)}^{1-\lambda}\\
&\lesssim&\|F_1\|_{L^a((T,\infty),L^f)}^{1-\lambda}.
\end{eqnarray*}
With the free fractional Schr\"odinger operator dispersive estimate \eqref{free}, % \cite{cox,syz}, 
%$$\|e^{-it\bD^{2s}}\cdot\|_r\leq\frac C{t^{N(\frac12-\frac1r)}}\|\bD^{2N(1-s)(\frac12-\frac1r)}\cdot\|_{r'}, \quad \forall r\geq2,$$
 one has for $T\leq t$ and $f:=\frac2\delta$, such that $1\geq\frac2r\geq\delta>0$,
\begin{eqnarray*}
\|F_1\|_{\frac2\delta}
&\lesssim&\int_0^{T-\varepsilon^{-\beta}}\frac1{(t-s)^{\frac{N(1-\delta)}2}}\|\bD^{N(1-s)(1-\delta)}\Big(|x|^{-b}|u|^{p-1}u\Big)\|_{L^{\frac2{2-\delta}}}\,ds.
\end{eqnarray*}
Using the fractional chain rule in Lemma \ref{chain}, one writes
\begin{eqnarray*}
\|\bD^{N(1-s)(1-\delta)}\Big(|x|^{-b}|u|^{p-1}u\Big)\|_{L^{\frac2{2-\delta}}}
&\lesssim&\||x|^{-b-N(1-s)(1-\delta)}|u|^p\|_{L^{\frac2{2-\delta}}}+\||x|^{-b}|u|^{p-1}\bD^{N(1-s)(1-\delta)}u\|_{L^{\frac2{2-\delta}}}\nonumber\\
&:=&(I)+(II).%\label{rstr}
\end{eqnarray*}
The first term $(I)$ can be estimated as previously. It remains to control the second term. By H\"older's inequality and Sobolev embedding, we have
\begin{eqnarray*}
(II)
&=&\||x|^{-b}|u|^{p-1}\bD^{N(1-s)(1-\delta)}u\|_{L^{\frac2{2-\delta}}}\\
&=&\|[|x|^{-s}u]^{\frac{b}s}u^{p-1-\frac{b}s}\bD^{N(1-s)(1-\delta)}u\|_{L^{\frac2{2-\delta}}}\\
&\leq&\||x|^{-s}u\|^{\frac{b}s}\|u\|_{L^{r_1}}^{p-1-\frac{b}s}\|\bD^{N(1-s)(1-\delta)}u\|_{L^{r_2}}\\
&\lesssim&\|u\|_{\dot H^s}^{\frac{b}s}\|u\|_{{L^{r_1}}}^{p-1-\frac{b}s}\|u\|_{ H^s}\\
&\lesssim&\|u\|_{H^s}^{1+\frac{b}s}\|u\|_{{L^{r_1}}}^{p-1-\frac{b}s}\\
&\lesssim&\|u\|_{H^s}^p.
\end{eqnarray*}
Here, on takes
$$
\left\{
\begin{array}{ll}
\frac{2-\delta}2=\frac{b}{2s}+\frac{p-1-\frac{b}s}{r_1}+\frac1{2},\\
p\geq1+\frac{b}s,\\
%\frac1{r_2}=\frac12-\frac sN+(1-s)(1-\delta);\\
0\leq N(1-s)(1-\delta)\leq s,\\
2\leq r_1\leq\frac{2N}{N-2s}.
\end{array}
\right.
$$
So,
$$
\left\{
\begin{array}{ll}
(p-1-\frac{b}s)\frac{N-2s}{2N}\leq\frac{1-\delta}2-\frac{b}{2s}=\frac{p-1-\frac{b}s}{r_1}\leq\frac{p-1-\frac{b}s}2,\\
p\geq1+\frac{b}s,\\
%\frac1{r_2}=\frac32-s(1+\frac1N);\\
0\leq\frac{N(1-\delta)}{1+N(1-\delta)}\leq s.%;\\2\leq r_1\leq\frac{2N}{N-2s}.
\end{array}
\right.
$$
Then,
$$
\left\{
\begin{array}{ll}
2\Big(\frac{1-\delta}2-\frac{b}{2s}\Big)\leq p-1-\frac{b}s\leq\frac{2N}{N-2s}\Big(\frac{1-\delta}2-\frac{b}{2s}\Big) ;\\
p\geq1+\frac{b}s;\\
%\frac1{r_2}=\frac32-s(1+\frac1N);\\
0\leq\frac{N(1-\delta)}{1+N(1-\delta)}\leq s.%;\\2\leq r_1\leq\frac{2N}{N-2s}.
\end{array}
\right.
$$
Therefore,
$$
\left\{
\begin{array}{ll}
%\frac{N-2s}{2N}(p-1-\frac{b}s)\leq s(1+\frac1N)-\frac12-\frac{b}{2s}=\frac{p-1-\frac{b}s}{r_1}\leq \frac{p-1-\frac{b}s}2;\\
\max\{1+\frac{b}s,2-\delta\}\leq p\leq 1+\frac{b}s+\frac{N}{N-2s}\Big(1-\delta-\frac{b}{s}\Big);\\
%\frac1{r_2}=\frac32-s(1+\frac1N);\\
\frac{N(1-\delta)}{1+N(1-\delta)}\leq s.
%2\leq r_1\leq\frac{2N}{N-2s}.
\end{array}
\right.
$$
The proof is achieved by taking $\delta\to0$.

%%%%%%%%%%%%%%%%%%%%%%%%%%%%%%%%%%%%%%%%%%%%%%%%%%%%%%%%%
\section{A remark on the defocusing case}\label{appendix2}
%%%%%%%%%%%%%%%%%%%%%%%%%%%%%%%%%%%%%%%%%%
Although the main goal of this paper is concerned with the focusing regime, we want to draw the reader’s attention to the fact that the defocusing case can be studied in a more simpler way. For the sake of completeness, we give the statement and the proof in this appendix.

Consider the defocusing fractional inhomogeneous Schr\"odinger equation
\begin{align} \label{S'}
	\left\{
	\begin{array}{ccl}
	i\partial_t u -(-\Delta)^s u &=&  |x|^{-b}|u|^{p-1}u, \quad (t,x) \in \R \times \R^N,  \\
	u(0,x)  &= & u_0(x).
	\end{array}
	\right.
	\end{align}
One proves the following scattering result.
\begin{prop}\label{defoc}
 Assume that $ N\geq 3$, $s\in(\frac N{2N-1},1)$, $0<b<2s$ and $p_*<p<p^*$. Then, for $u_0\in H_{rad}^{s}$, the maximal energy solution of \eqref{S'} is global and scatters in $H^s$.
\end{prop}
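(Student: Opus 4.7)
The plan is to reproduce the scheme of Theorem \ref{t1}(i) in a significantly simplified form, exploiting the fact that in the defocusing regime every ingredient of the variational machinery becomes automatic. First I would observe that the conserved energy now reads
\begin{equation*}
E[u(t)]=\|\bD^{s}u(t)\|^{2}+\tfrac{2}{1+p}P[u(t)]=E[u_{0}],
\end{equation*}
so both summands are nonnegative. Combined with mass conservation this yields the uniform a priori bound
\begin{equation*}
\sup_{t\in[0,T^{*})}\|u(t)\|_{H^{s}}\leq C(M[u_{0}],E[u_{0}])<\infty,
\end{equation*}
which in particular gives $T^{*}=\infty$ (no coercivity analysis of Section \ref{S3} is needed here). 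Thus the global part of the statement is free, and one is reduced to proving scattering for a global solution satisfying \eqref{Bound-s}.

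Next, I would revisit the virial computation of Lemma \ref{Evo-M-R}, where the nonlinear contribution $\mathbf{B}(t)$ changes sign in the defocusing setting. The point is that the leading term $-\tfrac{4sB}{1+p}\int_{|x|<R}|x|^{-b}|u|^{1+p}\,dx$ in \eqref{E-M-R} is replaced by its positive counterpart $+\tfrac{4sB}{1+p}\int_{|x|<R}|x|^{-b}|u|^{1+p}\,dx$. Since the Laplacian-type contribution $4\int_{0}^{\infty}m^{s}\!\int_{|x|<R}|\nabla u_{m}|^{2}\,dx\,dm$ is already nonnegative, there is no need for the coercivity estimate \eqref{Coer2}: one obtains directly
\begin{equation*}
\frac{d}{dt}M_{R}[u(t)]\geq \tfrac{4sB}{1+p}\int_{|x|<R}|x|^{-b}|u(t)|^{1+p}\,dx+\text{O}\bigl(R^{-b}\bigr),
\end{equation*}
together with $|M_{R}[u(t)]|\lesssim R\bigl(1+\|\bD^{s}u\|^{1/s}\bigr)\lesssim R$. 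Integrating in time and mimicking Corollary \ref{bnd1}, this produces the Morawetz bound
\begin{equation*}
\int_{0}^{T}\!\int_{|x|<R}|x|^{-b}|u(t,x)|^{1+p}\,dx\,dt\lesssim R+TR^{-b},\qquad T,R>0.
\end{equation*}
Arguing as in Corollary \ref{bnd2}, I then extract a sequence $t_{n}\to\infty$ and $R_{n}\to\infty$ along which $\int_{|x|<R_{n}}|x|^{-b}|u(t_{n})|^{1+p}\,dx\to 0$, and the Hölder estimate at the beginning of Section \ref{S6} converts this into
\begin{equation*}
\int_{|x|\leq R}|u(t_{n},x)|^{2}\,dx\lesssim R^{\frac{2b+N(p-1)}{1+p}}\Big(\int_{|x|\leq R_{n}}|x|^{-b}|u(t_{n})|^{1+p}\,dx\Big)^{\!2/(1+p)}\to 0,
\end{equation*}
which is exactly the Tao-type smallness \eqref{crtr}.

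Finally, I would check that the scattering criterion Proposition \ref{crt} transfers to \eqref{S'}. Its proof is entirely based on Strichartz and dispersive estimates applied to the Duhamel formula, together with the nonlinear estimates of Lemma \ref{tch} and the computations for $F_{1},F_{2}$ in Proposition \ref{fn}; none of these use the sign of $|x|^{-b}|u|^{p-1}u$, only its modulus. Hence the same statement holds verbatim for \eqref{S'}, and combining it with the smallness obtained above yields scattering in $H^{s}$. The main potential obstacle I anticipate is the bookkeeping in the Morawetz identity, since one must reverify that the error terms coming from the Balakrishnan representation of $\bD^{2s}$ (Lemmas \ref{improv4.2}--\ref{E123}) are bounded by $R^{-b}$ or better, but these estimates only use the uniform $H^{s}$ bound on $u$ and therefore go through unchanged in the defocusing setting.
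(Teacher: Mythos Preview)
Your approach is correct in spirit but takes a genuinely different route from the paper, and in doing so does not quite recover the statement as written. The paper does \emph{not} recycle the localized virial $M_{R}$ with the truncated weight $f_{R}$; instead it runs the Morawetz computation with the classical untruncated weight $f(x)=|x|$. For $N\geq 3$ one has $-\Delta^{2}|x|\geq 0$ and $\partial_{k}\partial_{l}|x|\,\Re(\partial_{k}u_{m}\partial_{l}\bar u_{m})\geq 0$, so \emph{every} term in the virial derivative is nonnegative and one obtains directly, after integrating in time and invoking $|M_{|x|}[u]|\lesssim \|\bD^{1/2}u\|^{2}+\|u\|\,\|\bD^{1/2}u\|$, a global bound
\[
\int_{\R}\!\int_{\R^{N}}|x|^{-b-1}|u|^{1+p}\,dx\,dt+\int_{\R}\!\int_{\R^{N}}|x|^{-b}|u|^{1+p}\,dx\,dt<\infty.
\]
Combined with the radial Strauss estimate \eqref{F-Sob-I}, this places $u$ in a diagonal space $L^{q}(\R\times\R^{N})$ with $(q,q)\in\Gamma_{\gamma}$ for some $0<\gamma<s$, and scattering then follows from a standard Strichartz/Duhamel argument.

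The practical difference is important: your route funnels everything through Proposition~\ref{crt}, whose proof (the $F_{1}$ estimate in Proposition~\ref{fn}) needs the supplementary hypotheses $s>\tfrac{N}{N+1}$, $p>2(1-\tfrac{b}{N})$, and $p<\tfrac{N-2b}{N-2s}$ when $N=3$. Those restrictions are \emph{not} assumed in Proposition~\ref{defoc}, so your argument only establishes scattering on the smaller parameter range of Theorem~\ref{t1}(i), not on the full intercritical window $p_{*}<p<p^{*}$ claimed here. The paper's untruncated Morawetz approach bypasses the Dodson--Murphy scattering criterion entirely and therefore avoids these extra constraints; this is precisely what the defocusing sign buys you.
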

\begin{proof}
The global existence follows by the conservation of the energy and the energy sub-critical regime. Let the Morawetz action
$$M_{f}[u(t)]:=2\Im\int_{\R^N}\bar {u}\nabla{f}\cdot\nabla u\,dx := 2\Im\int_{\R^N}\bar u\partial_k{f}\partial_k u\,dx. $$
Taking into account the calculus performed in the proof of Lemma \ref{Evo-M-R}, we get
\begin{eqnarray*}
\partial_t\Big(M_f[u(t)]\Big)
&=&\int_0^\infty m^s\int_{\R^N}\Big(4\partial_k\bar u_m\partial^2_{kl}{f}\partial_lu_m-\Delta^2{f}|u_m|^2\,\Big)dx\,dm\\
&+&\frac{2(p-1)}{1+p}\int_{\R^N}\Delta{f}|x|^{-b}|u|^{p+1}\,dx+\frac{4b}{1+p}\int_{\R^N}\frac{x\cdot\nabla{f}}{|x|^2}|x|^{-b}|u|^{1+p}\,dx.
\end{eqnarray*}
One picks $f(x):=|x|$. Then,
$$\nabla f(x)=\frac{x}{|x|},\quad \Delta f=\frac{N-1}{|x|},$$
and
$$-\Delta^2f=\left\{
\begin{array}{ll}
4\pi(N-1)\delta_0\quad\mbox{if}\quad N=3,\\
\frac{(N-1)(N-3)}{|\cdot|^3}\quad\mbox{if}\quad  N\geq4.
\end{array}
\right.$$
Clearly we have
$$\partial_l\partial_k f\Re(\partial_ku_m\partial_l\bar{u_m})\geq0,$$
where $u_m$ is given by \eqref{u-m}.
Indeed, denoting $r:=|x|$, one computes
\begin{align*}
\partial_l\partial_kf(x)&=\frac{1}{r}(\delta_{lk}-\frac{x_lx_k}{r^2});\\
\partial_l\partial_kf\Re(\partial_ku_m\partial_l\bar u_m)&=\frac{1}{r}\left(|\nabla u_m|^2-\frac{|x\cdot \nabla u_m|^2}{r^2}\right)\geq 0.
\end{align*}
Hence,
\begin{eqnarray*}
\partial_t\Big(M_f[u(t)]\Big)
&\geq&\frac{2(p-1)}{1+p}\int_{\R^N}\Delta{f}|x|^{-b}|u|^{p+1}\,dx+\frac{4b}{1+p}\int_{\R^N}\frac{x\cdot\nabla{f}}{|x|^2}|x|^{-b}|u|^{1+p}\,dx\\
&\geq&\frac{2(p-1)(N-1)}{1+p}\int_{\R^N}|x|^{-b-1}|u|^{p+1}\,dx+\frac{4b}{1+p}\int_{\R^N}|x|^{-b}|u|^{1+p}\,dx.
\end{eqnarray*}
Since by \cite[Lemma A.1]{bhl}, one has $|M_f[u(t)]|\leq C(\|\bD^{\frac12}u(t)\|^2+\|u(t)\|\|\bD^{\frac12}u(t)\|)$, one gets
$$\int_{\R^N}|x|^{-b-1}|u(t,x)|^{p+1}\,dx+\int_{\R^N}|x|^{-b}|u(t,x)|^{1+p}\,dx\lesssim 1.$$
Therefore, by the fractional radial Sobolev inequality \eqref{F-Sob-I}, we get
\begin{eqnarray*}
1
&\gtrsim&\int_{\R\times\R^N}|x|^{-b-1}|u(t,x)|^{p+1}\,dx\,dt+\int_{\R^N}|x|^{-b}|u(t,x)|^{1+p}\,dx\,dt\\
&\gtrsim&\int_{\R\times\R^N}|u(t,x)|^{1+\frac{2(1+b)}{N-2s}+p}\,dx\,dt+\int_{\R^N\times\R}|u(t,x)|^{1+\frac{2b}{N-2s}+p}\,dx\,dt.
\end{eqnarray*}
Hence,
$$u\in L^{1+\frac{2(1+b)}{N-2s}+p}\Big(\R, L^{1+\frac{2(1+b)}{N-2s}+p}(\R^N)\Big)\cap L^{1+\frac{2b}{N-2s}+p}\Big(\R,L^{1+\frac{2b}{N-2s}+p}(\R^N)\Big).$$
Now, since $0<s_c<1$ implies that $0<1+\frac{2b}{N-2s}+p<s$, one has $(1+\frac{2b}{N-2s}+p,1+\frac{2b}{N-2s}+p)\in\Gamma_\gamma$ for some $0<\gamma<s$. The scattering follows by standard arguments via Stichartz estimates.
\end{proof}

\end{appendix}

%%%%%%%%%%%%%%%%%%%%%%%%%%%%%%%%%%%%%%%%%%%%%%%%%%%%%%%%%%%%%%%%%%%%%%%%%%
%\section*{Acknowledgment}
%The authors would like to thank the reviewers for their helpful comments and suggestions.

%%%%%%%%%%%%%%%%%%%%%%%%%%%%%%%%%%%%%%%%%%

\end{document}